\documentclass[10pt]{amsart}
\usepackage{amsfonts}
\usepackage{amssymb}
\usepackage{amscd}
\newtheorem{theorem}{Theorem}[section]
\newtheorem{corollary}[theorem]{Corollary}

\newtheorem{lemma}[theorem]{Lemma}
\newtheorem{proposition}[theorem]{Proposition}

\newtheorem{remark}[theorem]{Remark}

\newtheorem*{Theorem A}{Theorem A}
\newtheorem*{Theorem A'}{Theorem A'}
\newtheorem*{Corollary A1}{Corollary A1}
\newtheorem*{Theorem B}{Theorem B}
\newtheorem*{Corollary B1}{Corollary B1}
\newtheorem*{theorem*}{Theorem}
\newtheorem*{Theorem C}{Theorem C}

\theoremstyle{definition}
\theoremstyle{remark}
\numberwithin{equation}{section}

\newcommand{\rmi}{{\rm (i)\,\,}}
\newcommand{\rmii}{{\rm (ii)\,\,}}

\newcommand{\vp}{\varphi}

\newcommand{\diver}{{\mathop{\mathrm div}\,}}
\renewcommand{\div}[1]{{\mathop{\mathrm div}}\bigl(#1\bigr)}

\newcommand{\vol } {\mathrm{vol}\,}
\renewcommand{\a}{\alpha}

\newcommand{\e} {\epsilon}

\newcommand{\ty} {\infty}

 \renewcommand{\O}{\Omega}
\newcommand{\bdr}{\partial}
\newcommand{\R}{\mathbb{R}}

\newcommand{\N} {\mathbb N}
\newcommand{\ds}{\displaystyle}

\begin{document}

\title[Logistic-type equations on manifolds]
{Existence and non-existence results for a logistic-type equation on manifolds}

\author{Stefano Pigola}
\address{Dipartimento di Fisica e Matematica\\
Universit\`a dell'Insubria - Como\\
via Valleggio 11\\
I-22100 Como, ITALY}
\email{stefano.pigola@uninsubria.it}

\author{Marco Rigoli}
\address{Dipartimento di Matematica\\
Universit\`a di Milano\\
via Saldini 50\\
I-20133 Milano, ITALY}
\email{rigoli@mat.unimi.it}

\author{Alberto G. Setti}
\address{Dipartimento di Fisica e Matematica
\\
Universit\`a dell'Insubria - Como\\
via Valleggio 11\\
I-22100 Como, ITALY}
\email{alberto.setti@uninsubria.it}

\dedicatory{Dedicated to the memory of Franca Burrone
Rigoli}

\subjclass[2000]{Primary: 58J05, 58J50 Secondary: 35J60, 35P05, 53C21}

\keywords{Nonlinear elliptic equations, existence and non-existence
results, eigenvalues, Riemannian manifolds}

\begin{abstract}
We study the steady state solutions of a generalized logistic type equation on a complete Riemannian manifold.
We provide sufficient conditions for existence, respectively non-existence of  positive solutions, which depend
on the relative size of  the coefficients and their mutual interaction with the geometry of the manifold,
which is mostly taken into account by means of conditions on the volume growth of geodesic balls.
\end{abstract}

\maketitle
\section*{0. Introduction}

The aim of this paper is to study the problem of existence,
non-existence and uniqueness of  solutions of the
equation
\begin{equation}
\label{0.3}
\Delta u + a(x) u - b(x)u^\sigma = 0\quad \text{ on } \, M,
\end{equation}
on a complete, connected Riemannian manifold
$(M, \langle \,,\, \rangle)$. Here $\sigma>1$, the coefficient  $b(x)$  is assumed to
be non-negative while $a(x)$  is not assumed to be of constant sign.

Equations of the form (\ref{0.3}) arise in Riemannian geometry, as the equation for the
change of the scalar curvature under a conformal change of the metric (see, e.g.,
\cite{K}) and  in mathematical biology, where they describe the steady
state solutions of the  logistic equation with diffusion
\begin{equation}
\label{0.1}
\frac{\bdr}{\bdr t} u= \Delta u + a(x) u - b(x) u^\sigma,
\end{equation}
(see, e.g., \cite{AB}, \cite{AW}, \cite{DM1}, \cite{DM2}). In the
latter context $u$  represents the density of a population, and it
is therefore assumed to be nonnegative, the non-linear term $-
b(x) u^\sigma$ accounts for the fact that the population is self
limiting, and the function $a(x)$ represents the birth rate of the
population, with no self limitation.

In  Euclidean setting,
G.A.~Afrouzi and K.J~Brown,
\cite{AB},  have studied the following special case of (\ref{0.3})
\begin{equation}
\label{0.2}
\Delta u + \lambda [g(x) u - u^2] = 0\quad \text{ on } \, \R^m,
\end{equation}
where the positive parameter $\lambda>0$ is the inverse of  the
diffusion rate, and $g(x)$ is a changing sign coefficient which
again represents  the birth rate of the population. Their
results describe the interplay between diffusion, and birth rate,
and show that if diffusion is sufficiently small, solutions may exist even if  $a$ is
predominantly negative, while if diffusion is large, then solutions
exist only if the birth rate is sufficiently large.

In fact, the mutual interactions between diffusion and birth rate
is often taken into account by the principal eigenvalue
$\lambda_*$ of the linear part of equation (\ref{0.2}) (see
Section~1 below for the relevant definitions). This is
exemplified, e.g., in  \cite{AB} Theorem~2.2, where it is proved
that if $g$ is positive somewhere, so that  $\lambda_*\geq 0,$
and if $\lambda>\lambda_*$, then equation (\ref{0.2}) has a positive
solution.

Further, (see Section 3 therein) under the additional assumption
that  $g(x)$ is strictly negative in the complement of a ball,
(\ref{0.2}) has  (exactly one) positive solution if $\lambda
>\lambda_*$, and no positive solution   if $\lambda \leq \lambda_*$.

More recently, existence, non-existence, and uniqueness results have been
obtained by Y.Du and L.Ma,  \cite{DM2}, who study the equation
\begin{equation}
\label{0.2bis}
\Delta u + \lambda g(x) u - b(x)u^\sigma = 0\quad \text{ on } \, \R^m
\end{equation}
where $\sigma>1$ and $b(x)$ is a non-negative coefficient. They show that if
$g$ is positive somewhere and its positive part satisfies suitable
conditions which in particular imply  that $\lambda_*>0$, and if $b$ is strictly
positive outside a bounded connected open set $S_o$ with principal eigenvalue
$\lambda_1(S_o)$ ($=+\infty$ if $S_o$ is empty), then (\ref{0.2bis}) has a (unique)
solution in the homogeneous Sobolev space $H_h^1(\R^m)$ if
$\lambda_*< \lambda<
\lambda_1(S_o)$, no solution in $H_h^1(\R^m)$ if $0<\lambda\leq
\lambda_*$ and no solutions at all if $\lambda_1(S_o)<+\infty$ and
$\lambda\geq \lambda_1(S_o).$

Our results provide new insight on the interplay between diffusion
and growth rate, that is, in our  notation, between the relative size of
the variable coefficients $a(x)$ and $b(x).$ Indeed, we show that
if $a(x)$ is sufficiently large  in a suitable ball,
while outside the ball the negative part of $a(x)$ is not too big
(so a possibly overall negative birth rate is compensated by a sufficiently large
positive birth rate in the ball), then (\ref{0.3})
has a positive solution (see Theorem~\ref{thm 2.1} below), independently
of the size of $b(x)$. On the other hand,  the content of Theorem~\ref{thm 3.3}
is that if $a(x)$ is sufficiently small compared to $b(x)$, and certain
geometric conditions on the volume growth of the manifold hold, then (\ref{0.3}) has no
positive solution. We note that in the special case where  $b$ is
constant, and the underlying manifold is Euclidean space,
Theorem~\ref{thm 3.3} generalizes and complements
the non-existence result contained in  \cite{AB}, Section~3.

Observe also that,  according to Theorem~2.1 in \cite{BRS2},
if $a(x)$ is sufficiently  large that the bottom of the
spectrum $\lambda_1^{\Delta+a(x)}(M)$ of the Schr\"odinger operator
$\Delta + a(x)$ is negative, then one can guarantee the existence
of a (minimal) solution of (\ref{0.3}) irrespectively  of the size
of $b(x)$. This is tightly related to the above mentioned
relationship between the existence of steady state solutions and the
principal eigenvalue $\lambda_*$ of the problem $\Delta u + \lambda
a(x)u=0$ on $M$.  Indeed, as we shall explain in Section~1,
$\lambda_*$ is precisely the largest value of $\lambda$ for which
$\lambda_1^{\Delta +\lambda a(x)}(M)\geq 0.$

Note however, that in our main existence result, Theorem~\ref{thm
2.1},  we avoid an assumption of this type and describe explicit
conditions on the coefficients that  guarantee existence, thus giving
a new contribution to the subject.

It should also be stressed that having replaced Euclidean space
with a Riemannian manifold,  the behavior of the equation is now
sensitive to the geometry of the underlying space, and therefore
reflects not only the mutual relationship of the coefficients
$a(x)$ and $b(x)$, but also their respective interaction with the
geometry. From the analytic point of view this introduces new
difficulties. For instance, to prove our main non-existence result
we need to determine an asymptotic a priori upper bound for the solution
$u$, and the techniques that are usually employed in Euclidean
setting are not available. We overcome the problem via a different
approach of  geometric flavor, which may be of independent
interest (see Lemma~\ref{lemma 3.1}).

From now on we denote by $(M, \langle \, ,\,\rangle)$ a connected,
complete, non-compact Riemannian manifold of dimension $m\geq 2$.
We fix a reference point $o$ in $M,$ and denote by $r(x)$ the
Riemannian distance function from $o$, and by $B_R$ and $\bdr B_R$
the geodesic ball and sphere, respectively,  of radius $R>0$
centered at $o$. Finally, we will denote by $C$, possibly with
subscripts or superscripts, a positive constant which may vary from place
to place and that may depend  on any factor quantified
(implicitly or explicitly) before its occurrence, but not on factors
quantified afterwards.
Given functions $A$ and $B,$ defined on a set
$\Omega,$ we say that $A= O(B)$ in $\Omega$ if
there exists $C$ such that
\begin{equation*}
A(t)\leq C B(t) \quad \forall t \in \Omega.
\end{equation*}

\section{On the principal eigenvalue $\lambda_{*}$}

As mentioned in the Introduction, existence results for equation
(\ref{0.2}) typically depend on the assumption that the parameter
$\lambda$ be strictly greater than the principal eigenvalue of the
linear part of the equation.

Recall that a constant $\lambda_1$ is said to be a principal
eigenvalue for the linear equation
\begin{equation*}
\Delta u +\lambda a(x) u = 0
\end{equation*}
if for $\lambda =\lambda _1$ the equation has a positive solution.

On the other hand, in the literature on the non-compact Yamabe equation
existence results often depend on the assumption that the sign of the
bottom of the spectrum of the Schr\"odinger operator associated to the
equation be negative (see, e.g. \cite{BRS2}).

It is therefore natural to investigate the relationships between
principal and spectral eigenvalues.

Let $a(x)\in C^{\infty}(M)$ and, given a fixed radius $R$,
consider the eigenvalue problem
\begin{equation}
\label{2.1}
\begin{cases}
\Delta \vp + \lambda a(x) \vp = 0 &\text{on } \, B_R, \,\, \lambda
\in \R\\
\vp = 0 &\text{on } \, \bdr B_R.
\end{cases}
\end{equation}
If $a(x_o)>0$ for some $x_o\in B_R,$ then it is well known (see
\cite{MM}, \cite{HK}) that (\ref{2.1}) has a positive principal
eigenvalue $\lambda_1(B_R),$ which is variationally characterized
by
\begin{equation}
\label{pev} \lambda_1(R) = \inf \Bigl\{\int_{B_R} |\nabla u|^2 \, :
\, u\in H^1_o (B_R) ,\, \int_{B_R} a(x) u^2 = 1\Bigr\},
\end{equation}
and a principal positive eigenfunction $\vp$ on $B_R$ satisfying
\begin{equation}
\label{2.2}
\begin{cases}
\Delta \vp + \lambda_1(R)  a(x) \vp = 0 &\text{on } \, B_R, \\
\vp = 0 &\text{on } \, \bdr B_R.
\end{cases}
\end{equation}
We note in passing that, by the maximum principle, the condition that $a(x)$ is
positive somewhere in $B_R$ is also necessary for the existence of
a positive principal eigenvalue.

It follows from (\ref{pev}) that $\lambda_1(R)$ is a
non-increasing function of $R$, and we may set
\begin{equation}
\label{2.3}
\lambda_* = \lim_{R \to +\ty} \lambda_1(R) \geq 0.
\end{equation}

On the other hand, for $\mu\in \R,$ let  $L_\mu$ be the
operator $L_\mu = \Delta +\mu a(x)$ and denote by $\lambda_1(L_\mu, R)$
the first Dirichlet eigenvalue of $L_\mu$ on $B_R$, so that
\begin{equation*}
\lambda_1(L_\mu, R) = \inf\Bigl\{
\int_{B_R} |\nabla u|^2 -\mu a(x) u^2\, :
\, u\in H^1_o (B_R) ,\, \int_{B_R}  u^2 = 1\Bigr\},
\end{equation*}
and there exists a smooth positive eigenfunction $\psi$ of $L_\mu$
on $B_R$ satisfying
\begin{equation}
\label{ef}
\begin{cases}
L_\mu \psi = -  \lambda_1(L_\mu, R) \psi    &\text{on } \, B_R, \\
\psi = 0 &\text{on } \, \bdr B_R.
\end{cases}
\end{equation}
Again $\lambda_1(L_\mu, R)$ is a non-increasing function of $R$ and
one may define
\begin{equation*}
\lambda_1^{L_\mu}(M)=\lim_{R\to +\ty} \lambda_1(L_\mu, R),
\end{equation*}
which coincides with the bottom of the $L^2$-spectrum of $L_\mu$
in the case where the operator is essentially self-adjoint on $C_c^\ty(M)$
(this happens, e.g., if the operator $L_\mu$ is bounded from below
on $C_c^\ty$, see \cite{BdCS}, Proposition~2).

By a result of W.F.~Moss and  J.~Pieperbrink, \cite{MP}, and
D.~Fisher-Colbrie and R.~Schoen, \cite{FCS}, we have that
$\lambda^{L_\mu}_1(M)\geq 0$, if and only if there exists a
positive solution $u\in C^\ty(M)$ of
\begin{equation}
\label{2.4}
\Delta u + \mu a(x) u = 0
\end{equation}
on $M$.

We are now ready to prove the following

\begin{proposition}
\label{prop 2.5}
Let $a(x)\in C^\ty(M)$ satisfy $a(x_o)>0$ for some $x_o\in M.$
Then
\begin{equation*}
\lambda_* = \sup\{\mu\geq 0 \,:\, \lambda_1^{L_\mu}(M) \geq 0\}.
\end{equation*}
\end{proposition}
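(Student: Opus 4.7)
The plan is to relate the two notions of eigenvalue at each finite scale $R$ and then pass to the limit. Specifically, I will show that on $B_R$ the function $\mu\mapsto \lambda_1(L_\mu,R)$ is strictly decreasing with unique zero at $\mu=\lambda_1(R)$; combined with the monotonicity of both $\lambda_1(R)$ and $\lambda_1(L_\mu,R)$ in $R$, the equality claimed in the proposition will follow.

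For the identification $\lambda_1(L_{\lambda_1(R)},R)=0$, observe that the principal positive eigenfunction $\vp$ of (\ref{2.2}) on $B_R$ also solves $L_{\lambda_1(R)}\vp=0$ on $B_R$ with $\vp=0$ on $\bdr B_R$, hence is a positive Dirichlet eigenfunction of $L_{\lambda_1(R)}$ with eigenvalue $0$. Since the first Dirichlet eigenvalue of a Schr\"odinger operator on a bounded domain is the only one that admits a positive eigenfunction, this forces $\lambda_1(L_{\lambda_1(R)},R)=0$. For $R>r(x_o)$ the coefficient $a$ is positive somewhere in $B_R$, so a compactly supported test function $u\in H^1_o(B_R)$ with support near $x_o$ satisfies $\int_{B_R} au^2>0$, and the variational characterization then forces $\mu\mapsto \lambda_1(L_\mu,R)$ to be strictly decreasing. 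Combining these two facts yields the sign dichotomy
$$\lambda_1(L_\mu,R)>0 \iff \mu<\lambda_1(R), \qquad \lambda_1(L_\mu,R)<0 \iff \mu>\lambda_1(R).$$

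To conclude, set $\mu_*:=\sup\{\mu\geq 0:\lambda_1^{L_\mu}(M)\geq 0\}$. If $0\leq \mu<\lambda_*$, then $\mu<\lambda_*\leq \lambda_1(R)$ for every $R>r(x_o)$, so $\lambda_1(L_\mu,R)>0$ and, in the limit, $\lambda_1^{L_\mu}(M)\geq 0$; hence $\mu_*\geq \lambda_*$. Conversely, if $\mu>\lambda_*$, choose $R_0$ with $\lambda_1(R_0)<\mu$; then $\lambda_1(L_\mu,R_0)<0$ and, by monotonicity in $R$, $\lambda_1^{L_\mu}(M)<0$, so $\mu_*\leq \lambda_*$. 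The only genuinely nontrivial ingredient is the uniqueness of positive Dirichlet eigenfunctions on a bounded domain, which is what pins down $\lambda_1(L_{\lambda_1(R)},R)=0$; the rest is monotonicity bookkeeping.
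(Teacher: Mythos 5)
Your route is genuinely different from the paper's. The paper proves the inequality $\lambda_*\leq\sup\{\mu:\lambda_1^{L_\mu}(M)\geq 0\}$ by building a global positive solution of $\Delta\vp+\lambda_*a\vp=0$ on all of $M$ (a limiting argument \`a la Jin over the balls $B_{R_k}$) and then invoking the Moss--Pieperbrink / Fischer-Colbrie--Schoen characterization, and the reverse inequality via the logarithmic substitution $w=\log u$ and an integration by parts. You instead stay at the finite-$R$ level throughout, identifying $\lambda_1(L_{\lambda_1(R)},R)=0$ directly from the positive principal eigenfunction of (\ref{2.2}), deriving a sign dichotomy for $\mu\mapsto\lambda_1(L_\mu,R)$, and then taking $R\to\infty$. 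The trade-off is clear: the paper's argument incidentally produces a positive solution of $\Delta\vp+\lambda_*a\vp=0$ on $M$, a useful object in itself, whereas your version is more elementary and avoids both the compactness argument and the spectral characterization of Fischer-Colbrie--Schoen. Your key identity $\lambda_1(L_{\lambda_1(R)},R)=0$ is correct and is the right lever.

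There is, however, a real gap in the step where you establish the dichotomy: the existence of a single test function $u$ with $\int_{B_R}au^2>0$ does \emph{not} force $\mu\mapsto\lambda_1(L_\mu,R)$ to be strictly decreasing. It only shows that $\lambda_1(L_\mu,R)\leq\int|\nabla u|^2-\mu\int au^2\to-\infty$ as $\mu\to+\infty$; an infimum over a family where one member is decreasing need not itself be decreasing. In fact strict decreasingness on $[0,\lambda_1(R)]$ can actually fail: at $\mu=0$ the minimizer is the first Dirichlet eigenfunction $\psi_0$ of the Laplacian, and if $a$ is predominantly negative one may have $\int_{B_R}a\psi_0^2<0$, so that $\lambda_1(L_\mu,R)$ is \emph{increasing} for small $\mu>0$. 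The dichotomy you need is nonetheless true, but should be derived from concavity rather than monotonicity: $\mu\mapsto\lambda_1(L_\mu,R)$ is an infimum of affine functions of $\mu$, hence concave; it equals $\lambda_1^{\Delta}(B_R)>0$ at $\mu=0$ and $0$ at $\mu=\lambda_1(R)$; concavity then forces it to be positive on $[0,\lambda_1(R))$ and negative on $(\lambda_1(R),+\infty)$. With that replacement, the rest of your limiting argument goes through and gives a valid alternative proof.
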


\begin{proof}
Fix $R_o$ sufficiently large that $x_o\in B_{R_o}$, and choose a
sequence $R_k$ such that $R_o<R_k\nearrow +\ty$. Denote by $\vp_k
$ the solution of (\ref{2.2}) on $B_{R_k}$ with principal
eigenvalue $\lambda_1(R_k)$, normalized with $\vp_k(x_o)=1.$ Arguing
as in the proof of \cite{J}, Theorem~1, one shows that
$\{\vp_k\}$ has a subsequence which converges locally uniformly on $M$ to a
$C^\ty$ non-negative function $\vp$ satisfying $\vp(x_o) =1$ and
\begin{equation*}
\Delta \vp + \lambda_* a(x) \vp =0 \quad \text{on }\, M.
\end{equation*}
Furthermore, by the maximum principle (see \cite{GT}, p. 35),
$\vp>0$ on $M.$ It follows from (\ref{2.4}) that $\lambda_1^{L_{\lambda_*}} (M) \geq
0$, so that
$$\lambda_* \leq \sup\{\mu\geq 0 \,: \,
\lambda_1^{L_{\mu}}(M)\geq 0\}.
$$

On the other hand, let $\mu \geq 0$ be such that
$\lambda_1^{L_{\mu}}(M)\geq 0$.  We claim that $\mu \leq
\lambda_*$ so that the reverse inequality holds in the above
formula, and the required conclusion follows. To this end, let $u$
be a smooth positive function satisfying (\ref{2.4}), and fix
$R>0$ sufficiently large that $x_o\in B_R$. Defining $w=\log u$,
it follows from
(\ref{2.4}) that
\begin{equation}
\label{2.6}
\Delta w = -\mu a(x) - |\nabla w|^2.
\end{equation}
Given any $v\in C_0^\ty(B_R)$, $v\not \equiv 0,$ we multiply both sides
of (\ref{2.6}) times $v^2$, integrate by parts and use Young inequality
to obtain
\begin{equation*}
\int_{B_R} \mu a(x) v^2 + |\nabla w|^2 v^2 = \int_{B_R} 2v <\nabla
v, \nabla w> \leq \int_{B_R} |\nabla w|^2 v^2 + |\nabla v|^2,
\end{equation*}
whence
\begin{equation*}
\mu\int_{B_R} a(x) v^2 \leq \int_{B_R} |\nabla v|^2.
\end{equation*}
Now   the variational characterization of the
principal eigenvalue shows that $\mu\leq \lambda_1(R)$ and the claim
follows from the definition of $\lambda_*.$
\end{proof}

\begin{corollary}
\label{cor 2.7}
 Let $a(x) \in C^\ty(M)$ be such that $a(x_o)>0$
for some $x_o\in M$. Then a non-negative number $\mu$ satisfies
$\mu> \lambda_*$ if and only if $\lambda_1^{L_\mu}(M)<0.$
\end{corollary}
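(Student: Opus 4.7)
The proof splits the equivalence into two implications, both of which essentially reduce to Proposition~\ref{prop 2.5}. The forward direction is immediate: since that proposition identifies $\lambda_*$ as the supremum of the set $S = \{\mu \geq 0 : \lambda_1^{L_\mu}(M)\geq 0\}$, any $\mu > \lambda_*$ exceeds an upper bound for $S$, so $\mu\notin S$, i.e.\ $\lambda_1^{L_\mu}(M)<0$.

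For the reverse implication I would prove the contrapositive: every $\mu$ with $0\leq \mu \leq \lambda_*$ belongs to $S$. The endpoint $\mu = \lambda_*$ is already handled inside the proof of Proposition~\ref{prop 2.5}, where a locally uniform limit of normalized positive Dirichlet eigenfunctions yields a positive $C^\ty(M)$ solution of $\Delta \vp + \lambda_* a(x) \vp = 0$; the Moss--Pieperbrink / Fisher-Colbrie--Schoen criterion then gives $\lambda_1^{L_{\lambda_*}}(M)\geq 0$.

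For $0\leq \mu < \lambda_*$ I would argue directly from the variational characterization (\ref{pev}). For every $R>0$ with $x_o\in B_R$, rescaling the normalization in (\ref{pev}) yields
\begin{equation*}
\int_{B_R}|\nabla v|^2 \geq \lambda_1(R) \int_{B_R} a(x) v^2 \qquad \forall v\in H^1_0(B_R),
\end{equation*}
and by monotonicity one has $\lambda_1(R)\geq \lambda_* > \mu \geq 0$. Splitting by the sign of $\int_{B_R} a(x) v^2$ (using $\mu\leq \lambda_1(R)$ when this integral is non-negative, and $\mu\geq 0$ when it is negative) one deduces $\int_{B_R}|\nabla v|^2 - \mu a(x) v^2\geq 0$ for every $v\in H^1_0(B_R)$, hence $\lambda_1(L_\mu, R)\geq 0$. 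Letting $R\to +\ty$ yields $\lambda_1^{L_\mu}(M)\geq 0$, as required.

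The only mild technical point is that $a$ is sign-indefinite, which is what forces the case distinction on $\int_{B_R} a v^2$ and makes the hypothesis $\mu\geq 0$ (rather than an arbitrary real number) genuinely essential in the last step. I would not expect any further obstacle, since the substantive work — the limit construction and the logarithmic-transform argument that compare $\lambda_*$ with the spectral threshold — has already been carried out in Proposition~\ref{prop 2.5}.
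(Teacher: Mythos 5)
Your proof is correct, and it takes a somewhat different route from the paper for the ``only if'' direction. Where you agree: the forward implication $\mu>\lambda_* \Rightarrow \lambda_1^{L_\mu}(M)<0$ is, in both cases, an immediate reading of the supremum characterization in Proposition~\ref{prop 2.5}. Where you differ: the paper proves $\lambda_1^{L_\mu}(M)<0 \Rightarrow \mu>\lambda_*$ by contradiction, taking the Dirichlet eigenfunction $\psi$ of $L_\mu$ on a ball $B_R$ with $\lambda_1(L_\mu,R)<0$ and plugging it as a test function into the Rayleigh quotient characterization (\ref{pev}) of $\lambda_1(R)$, which forces $\lambda_*\leq\lambda_1(R)<\mu$. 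You instead give a direct proof of the contrapositive: for any $v\in H^1_0(B_R)$, the $\lambda_1(R)$-variational inequality $\int|\nabla v|^2\geq\lambda_1(R)\int a v^2$ plus $\mu\leq\lambda_*\leq\lambda_1(R)$ and the sign split on $\int a v^2$ yield $\int|\nabla v|^2-\mu\int a v^2\geq 0$, hence $\lambda_1(L_\mu,R)\geq 0$ for all $R$. This buys you a slightly cleaner argument: it is organized positively rather than by contradiction, and it avoids invoking the existence of the Dirichlet eigenfunction $\psi$ for $L_\mu$ on $B_R$; the paper's version is marginally shorter because it tests a single well-chosen function rather than quantifying over all of $H^1_0(B_R)$. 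One small simplification you could make: the separate treatment of the endpoint $\mu=\lambda_*$ via the limit construction in Proposition~\ref{prop 2.5} is unnecessary, since your variational argument only uses $\mu\leq\lambda_1(R)$, which holds for all $R$ whenever $\mu\leq\lambda_*$ by the monotonicity $\lambda_1(R)\geq\lambda_*$; thus the same computation covers $0\leq\mu\leq\lambda_*$ uniformly.
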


\begin{proof}
Since in our assumptions $\lambda_*\geq 0$, we may assume that
$\mu>0.$ Assume by contradiction that $\lambda_1^{L_\mu}(M)<0$
and $\mu\leq \lambda_*.$ By definition, there exists  $R$
sufficiently large that $x_o\in B_R$ and $\lambda_1(L_\mu, R)<0$,
so that, if $\psi$ is the corresponding positive eigenfunction as in
(\ref{ef}), we have
\begin{equation*}
\int_{B_R} |\nabla \psi|^2 \leq \mu \int_{B_R} a(x) \psi^2.
\end{equation*}
In particular, the integral on the right hand side is positive,
and since $\psi\in H^1_0(B_R),$ we have
\begin{equation*}
\lambda_*\leq \lambda_1(R) \leq
\frac{\int_{B_R}|\nabla \psi|^2}{\int_{B_R} a(x)\psi^2} < \mu,
\end{equation*}
which gives the required contradiction. The reverse implication is
an immediate consequence of Proposition~\ref{prop 2.5}.
\end{proof}

We remark that statements similar to Proposition~\ref{prop 2.5}
and Corollary~\ref{cor 2.7} hold (almost trivially) in the case of a bounded domain
$\Omega$ with smooth boundary such that $a(x_o)>0$ for some $x_o\in \Omega$

We conclude this section by showing an application of the results
obtained to the case of the  Schr\"odinger operator $\Delta +
\lambda a(x)$ on $\R^m$. We assume that the positive part $a_+(x)$
of $a(x)$ does not vanish identically, so that the results
described above hold, and that it satisfies the estimate
\begin{equation*}
a_+(x) \leq \frac{k}{|x|^2}
\end{equation*}
for some positive constant $k$.
According to \cite{BRS1} Lemma~2.3, if $A(t)\leq \frac{(m-2)^2}{4t^2}$,
then the equation
\begin{equation*}
\Delta \vp + A(|x|) \vp = 0
\end{equation*}
has a positive solution $\vp$ on $\R^m$. Thus, if $\lambda k\leq
(m-2)^2/4,$ then $\vp $ satisfies
\begin{equation*}
\Delta \vp + \lambda a(x) \vp \leq 0,
\end{equation*}
which according to the above mentioned result of Fisher-Colbrie
and Schoen, \cite{FCS}, gives $\lambda_1^{\Delta + \lambda a(x)}(M)\geq 0.$
We conclude that $\lambda_*$ is strictly positive, and, in fact,
\begin{equation}
\label{p ev estimate}
\lambda _* \geq \frac{(m-2)^2}{4k}.
\end{equation}
We will come back to this in Section~3 below.

\section{Existence and uniqueness results}
The established relationship between  $\lambda_*$ and
$\lambda_1^{L_\mu}(M)$ allows us to apply to the present
situation many of the results obtained in \cite{BRS2}. In
particular, we quote the following theorem which states the
existence of minimal positive solutions of equation (\ref{0.3}).

\begin{theorem}
\label{thm BRS}
Let $a(x),$ $b(x)\in C^{0,\alpha}_{loc}(M)$ for some $0< \alpha< 1$.
Assume that $b(x)>0$, and that, having set $L= \Delta +a(x),$ we have
\begin{equation*}
\lambda_1^{L}(M)<0.
\end{equation*}
Then the equation
\begin{equation}
\label{2.0}
\Delta u +a(x) u - b(x) u^\sigma = 0, \quad \sigma >1,
\end{equation}
has a unique minimal $C^2$ positive solution.
\end{theorem}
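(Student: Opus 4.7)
The plan is to solve (\ref{2.0}) by the classical sub-/super-solution method on an exhausting sequence of geodesic balls, using the negativity of $\lambda_1^L(M)$ to produce a positive subsolution of compact support, and then take a monotone limit. To build the subsolution, since $\lambda_1^L(M)=\lim_{R\to\ty}\lambda_1(L,R)<0$ there exists $R_0$ with $\lambda_1(L,R_0)<0$; let $\psi>0$ be the corresponding Dirichlet eigenfunction from (\ref{ef}). Using $\Delta\psi+a(x)\psi=-\lambda_1(L,R_0)\psi$,
\begin{equation*}
\Delta(\e\psi)+a(x)(\e\psi)-b(x)(\e\psi)^{\sigma}=\e\psi\bigl(-\lambda_1(L,R_0)-b(x)\e^{\sigma-1}\psi^{\sigma-1}\bigr),
\end{equation*}
which is nonnegative on $B_{R_0}$ once $\e^{\sigma-1}\sup_{B_{R_0}}(b\psi^{\sigma-1})\leq -\lambda_1(L,R_0)$. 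Hence $\underline u:=\e\psi$, extended by zero outside $B_{R_0}$, is a Lipschitz subsolution of (\ref{2.0}) on all of $M$. A companion supersolution on each $B_n\supset B_{R_0}$ is any large constant $M_n$ with $M_n^{\sigma-1}\geq \sup_{B_n}a_+/\inf_{B_n}b$ and $M_n\geq\e\sup\psi$.

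Next, on each $B_n$ I would solve the Dirichlet problem for (\ref{2.0}) with zero boundary data by monotone iteration. Choose $K>0$ large enough that $t\mapsto Kt+a(x)t-b(x)t^{\sigma}$ is increasing on $[0,M_n]$, and iteratively solve $(-\Delta+K)v_{k+1}=Kv_k+a(x)v_k-b(x)v_k^{\sigma}$ on $B_n$, $v_{k+1}|_{\bdr B_n}=0$, starting from $v_0=\underline u$. Schauder theory and the maximum principle give a monotone increasing sequence in $[\underline u,M_n]$ converging to a $C^{2,\a}$ solution $u_n\geq\underline u$ on $B_n$. Comparing iterates on $B_n\subset B_{n+1}$ (each starting from $\underline u$) and using $v_k^{B_{n+1}}|_{\bdr B_n}\geq 0=v_k^{B_n}|_{\bdr B_n}$ produces, by induction, $u_n\leq u_{n+1}$ on $B_n$.

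The main obstacle I anticipate is a local uniform upper bound on $\{u_n\}$, since my constant supersolution $M_n$ grows with $n$. Here the absorption nonlinearity is decisive: because $\sigma>1$ and $b>0$, standard Keller--Osserman type barriers diverging at $\bdr B_{R+1}$ show that every nonnegative $C^2$ solution of (\ref{2.0}) on $B_{R+1}$ is bounded on $B_R$ by a constant depending only on $R$, $\sup_{B_{R+1}}a_+$, and $\inf_{B_{R+1}}b$. Applied to $u_n$ for $n\geq R+1$ this gives the required uniform local bound; Schauder estimates then promote the monotone pointwise convergence $u_n\nearrow u$ to $C^{2,\a}_{\mathrm{loc}}$ convergence, so $u$ is a classical solution of (\ref{2.0}), strictly positive on all of $M$ by the strong maximum principle applied to the linear equation $\Delta u+(a(x)-b(x)u^{\sigma-1})u=0$.

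For minimality (and hence uniqueness), let $\tilde u$ be any positive $C^2$ solution of (\ref{2.0}) on $M$. After shrinking $\e$ so that $\e\psi\leq\tilde u$ on $\overline{B_{R_0}}$, the monotone iteration on $B_n$ with $\tilde u|_{B_n}$ serving additionally as an upper barrier yields $u_n^{(\e)}\leq\tilde u$ by induction; hence $u^{(\e)}\leq\tilde u$ on $M$. Passing to the limit $\e\to 0^{+}$ --- with uniform local bounds from the absorption argument above and Harnack to preserve positivity --- identifies the unique minimal positive solution, completing the proof.
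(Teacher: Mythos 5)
The paper does not actually reprove this statement: it is quoted from \cite{BRS2}, and the text following the theorem says only that the proof is by sub- and super-solutions, the main task being the construction of a sub-solution from the negativity of $\lambda_1^{L}(M)$. Your attempt fleshes out precisely that sketch: the compactly supported sub-solution $\epsilon\psi$ built from the Dirichlet eigenfunction on a ball with $\lambda_1(L,R_0)<0$, local constant super-solutions on an exhaustion, monotone iteration with zero boundary data, a Keller--Osserman a priori bound to get local uniform control (the same role played by Lemma~2.6 of \cite{PRS2} in the proof of Theorem~\ref{thm 2.0}), and Schauder compactness to pass to the limit. Up to the final step this is the standard argument and is sound.

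The minimality step, however, has a genuine gap. You construct $u=u^{(\epsilon)}$ for a \emph{fixed} $\epsilon$, but the comparison with an arbitrary positive solution $\tilde u$ forces you to shrink $\epsilon$ so that $\epsilon\psi\le\tilde u$, which a priori changes the constructed object. You then propose to let $\epsilon\to 0^+$ and invoke ``Harnack to preserve positivity''; but Harnack only relates $\sup$ and $\inf$ of a positive solution on a compact set and cannot rule out that the non-increasing limit $u^\ast=\lim_{\epsilon\to 0}u^{(\epsilon)}$ vanishes identically --- the only lower bound you have, $u^{(\epsilon)}\ge\epsilon\psi$, degenerates exactly as $\epsilon\to 0$. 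The standard fix is the nonlinear comparison principle for the absorption equation, i.e.\ Proposition~\ref{prop 1.28} of this paper. Applied twice on $B_n$ it shows that the zero-Dirichlet problem for (\ref{2.0}) on $B_n$ has at most one positive solution, hence $u_n^{(\epsilon)}$, and therefore $u^{(\epsilon)}$, is independent of (sufficiently small) $\epsilon$, and the shrinking causes no loss. Better still, with Proposition~\ref{prop 1.28} in hand you can discard the $\epsilon$-shrinking entirely: for any positive solution $\tilde u$ on $M$ one has $u_n=0\le\tilde u$ on $\partial B_n$, whence $u_n\le\tilde u$ on $B_n$ and, letting $n\to\infty$, $u\le\tilde u$.
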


As mentioned in the introduction, if we assume that the function $a(x)$
is positive somewhere on $M$ then the condition $\lambda_1^{L}(M)<0$
amounts to the fact $\mu=1$ is larger than the principal eigenvalue
$\lambda_*$ of the problem $\Delta u + \lambda a(x)u=0$ on $M$,
and Theorem~\ref{thm BRS} compares with the existence
results in \cite{AB}, Theorem~2.2 and \cite{DM2}, Theorem~1
(for the latter, see also the remark after Theorem~\ref{thm LTY}).

The proof of Theorem~\ref{thm BRS} uses the method of super- and sub-solutions,
and the main task is the construction of a sub-solution, which is
where the assumption on the sign of $\lambda_1^{L}(M)$ plays a
crucial role. In the main result of this Section, Theorem~\ref{thm
2.1} below, we describe conditions not expressed in terms of the
sign of $\lambda_1^{L}(M)$, for which one can guarantee existence
of a solution.

Our first  result, Theorem~\ref{thm 2.0} below, states that,
if one has a global sub-solution of (\ref{2.0}) and the set where the
non-negative coefficient $b$ is suitably small,
it is always possible to prove the existence of a maximal solution.

The idea of the proof consists in applying the method of
sub- and super-solutions to a sequence of boundary value
problems on domains which exhaust the manifold.

Since a global sub-solution of (\ref{2.0}) is given, one first needs to
find local super-solution. If $b(x)$ is strictly positive a sufficiently
large constant will do. Even if this is not the case, a super-solution
can be found provided the set where $b(x)$ vanishes is small (see
Theorem~\ref{thm LTY} below).

To apply the approximation method it is also crucial that
the approximating sequence is monotonic, and this  follows from
the next comparison result.

\begin{proposition}
\label{prop 1.28} Let $D\subset M$ be an open set with smooth
boundary $\bdr D$, and assume that $a(x)$, $b(x)$ are functions in
$C(\overline D)\cap C^{0,\alpha}_{loc}(D),$ $0<\alpha<1,$
and that  $b(x)$ is non-negative and does not vanish identically on
any connected component of $D$.  Let $u,v \in C^0(\overline D)
\cap C^2(D)$ be a positive solutions  on $D$ of
\begin{equation}
\label{1.29}
\Delta u +a(x) u - b(x) u^{\sigma} = 0
\end{equation}
and
\begin{equation}
\label{1.29bis}
\Delta v + a(x) v - b(x) v^\sigma \leq 0
\end{equation}
respectively. If $u\leq v$ on $\bdr D $ then $u\leq v $ on $D.$
\end{proposition}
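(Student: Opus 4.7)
I would proceed by contradiction using the quotient $w = u/v$, which is smooth and positive on $D$ since $v>0$ there. For each $\delta > 0$ set
\begin{equation*}
\Omega_\delta = \{x\in D : u(x) > (1+\delta) v(x)\};
\end{equation*}
it suffices to show that $\Omega_\delta = \emptyset$ for every $\delta > 0$, from which the required inequality $u \le v$ follows by letting $\delta\to 0^+$. The key preliminary observation is that $\overline{\Omega_\delta}$ cannot touch $\partial D$: on $\partial D$ the hypothesis $u\le v$ gives $u \le v < (1+\delta)v$ wherever $v>0$, while at points where $v$ vanishes $u$ must vanish too (since $0\le u \le v = 0$), so the strict inequality $u>(1+\delta)v$ also fails. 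In the standard setting of a bounded $D$ this makes $\Omega_\delta\Subset D$, so that $v$ is bounded below by a positive constant on $\overline{\Omega_\delta}$ and $w\equiv 1+\delta$ on $\partial\Omega_\delta$.

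The computational heart of the argument is a \emph{purely} second-order inequality for $w$. Writing $u=vw$ and expanding $\Delta u = w\Delta v + 2\langle\nabla v,\nabla w\rangle + v\Delta w$ gives
\begin{equation*}
\Delta w + \frac{2}{v}\langle\nabla v,\nabla w\rangle \;=\; \frac{\Delta u - w\Delta v}{v}.
\end{equation*}
Substituting $\Delta u = -a(x)u + b(x) u^\sigma$ from the equation for $u$, and $-w\Delta v \ge w(a(x)v - b(x)v^\sigma)$ from the supersolution inequality for $v$ (valid because $w>0$), the identity $u=wv$ makes the $a(x)$-contributions cancel exactly, leaving
\begin{equation*}
\Delta w + \frac{2}{v}\langle\nabla v,\nabla w\rangle \;\ge\; b(x)\,v^{\sigma-1}\,w\,(w^{\sigma-1}-1) \quad\text{on } D.
\end{equation*}
The right-hand side is non-negative throughout $\Omega_\delta$, since there $w > 1+\delta > 1$ while $b\ge 0$, $v>0$ and $\sigma>1$.

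Therefore on the relatively compact open set $\Omega_\delta\Subset D$ the function $w$ is a sub-solution of the linear elliptic operator $L = \Delta + (2/v)\langle\nabla v,\nabla\cdot\rangle$, which has smooth bounded coefficients and, crucially, \emph{no} zeroth-order term. The standard weak maximum principle then gives $\sup_{\Omega_\delta} w \le \sup_{\partial\Omega_\delta} w = 1+\delta$, contradicting the strict inequality $w > 1+\delta$ on $\Omega_\delta$ and forcing $\Omega_\delta = \emptyset$. The main technical subtlety I expect is precisely the one bypassed by passing to $\Omega_\delta$ instead of working directly with $\Omega = \{u>v\}$: on the latter the drift $2\nabla v/v$ of $L$ could blow up at points of $\partial\Omega \cap \partial D$ where $v$ vanishes, and the boundary value of $w$ there is the indeterminate ratio $0/0$. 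I would also remark that the hypothesis that $b$ not vanish identically on each component of $D$ plays no role in this weak comparison; it would become essential only to upgrade the conclusion to a strict interior inequality via the strong maximum principle.
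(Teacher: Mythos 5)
Your approach is genuinely different from the paper's: rather than working with the quotient $w=u/v$ and a maximum principle, the paper first produces (by monotone iteration between the sub-solution $\delta u$ and the super-solution $v$) an auxiliary solution $w$ with $w=u$ on $\partial D$ and $\delta u\le w\le v$, and then shows $w=u$ by integrating the divergence of the vector field $Z=(w^2-u^2)\nabla\log(w/u)$ over $D$; the boundary term drops out because $w=u$ on $\partial D$, and the non-triviality of $b$ is what forces the proportionality constant to be $1$.

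There is a genuine gap in your argument, and it is exactly the compactness claim $\Omega_\delta\Subset D$. What your boundary discussion shows is that no point \emph{of} $\partial D$ satisfies $u>(1+\delta)v$; it does not show that $\Omega_\delta$ cannot accumulate at a boundary point $x_0$ where $v(x_0)=u(x_0)=0$. Along a sequence $x_n\to x_0$ with $v(x_n)\to0$ one can perfectly well have $u(x_n)/v(x_n)>1+\delta$, since the ratio is $0/0$ in the limit, so $\overline{\Omega_\delta}\cap\partial D$ need not be empty. Once compactness fails, the drift coefficient $2\nabla v/v$ need not be bounded on $\Omega_\delta$ and $w$ need not be continuous up to $\partial\Omega_\delta$, so the weak maximum principle cannot be invoked.

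Moreover, the gap is not merely technical: your final remark that the hypothesis ``$b$ does not vanish identically on any component'' plays no role is actually a warning sign that the argument cannot be complete, because the proposition is \emph{false} without that hypothesis. Take $D=(0,\pi)$, $a\equiv1$, $b\equiv0$, $u=2\sin x$, $v=\sin x$: then $u$ solves the equation, $v$ is a (super)solution, $u=v=0$ on $\partial D$ so $u\le v$ there, yet $u>v$ throughout $D$. In this example $w\equiv2$ and $\Omega_\delta=D$ for $\delta<1$, which is precisely the situation your compactness claim was meant to exclude. Any correct proof must therefore use the hypothesis on $b$ in an essential way — as the paper's integral identity does through the term $-\int_D b(x)(w^2-u^2)(w^{\sigma-1}-u^{\sigma-1})$ — whereas a pointwise maximum-principle argument on $w=u/v$ has no mechanism to do so.
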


\begin{proof} Since $a(x)$ has
indefinite sign, the standard comparison principle does not apply.
To circumvent this problem, set
$v_-=\delta u$ for some $\delta\in (0,1].$ Since $b(x)\geq 0$ and
$1-\sigma <0$ we have
\begin{equation*}
\Delta v_- = \delta \Delta u \geq -a(x) v_- +b(x) \delta^{1-\sigma}
v_-^{\sigma} \geq  - a(x)v_- + b(x) v_-^\sigma.
\end{equation*}
Next let $v_+=v,$ so that $v_-\leq u \leq v_+$ on $\bdr D.$ By the
monotone iteration scheme there exists a  $C^2$ solution $w$ of
(\ref{1.29}) with $w=u $ on $\bdr D,$ and $v_-\leq w\leq v_+ = v$
on $D.$ In order to conclude it is enough to show that $w=u,$ and
to this end we apply an  argument used in the proof of Lemma~2.2 in
\cite{BRS2}, which we reproduce here for the sake of completeness and
the convenience of the reader.
Let $Z$ be the vector field defined on $D$ by the formula
\begin{equation*}
Z=(w^2-u^2) \nabla \log \frac wu .
\end{equation*}
Since $u$ and $w$ are solutions of (\ref{1.29}), a direct calculation
yields
\begin{equation*}
\diver Z = b(x) (w^2-u^2)(w^{\sigma -1}-u^{\sigma-1}) +
\Bigl| \frac wu \nabla u -\nabla w\Bigr|^2 +
\Bigl| \frac uw \nabla w -\nabla u\Bigr|^2.
\end{equation*}
Integrating over $D$, applying the divergence theorem, and using
the fact that $u\equiv w$ on $\bdr D$ yield
\begin{equation}
\label{1.30}
\int_{D}  \Bigl| \frac wu \nabla u -\nabla w\Bigr|^2 +
\Bigl| \frac uw \nabla w -\nabla u\Bigr|^2  =
-\int_D
b(x) (w^2-u^2)(w^{\sigma -1}-u^{\sigma-1})\leq 0.
\end{equation}
It follows that $\nabla w - \frac w u \nabla u=0$ so that $u=Bw$
on any connected component $D_1$ of $D,$ for some constant $B>0.$
Inserting this into the inequality
\begin{equation*}
-\int _{D_1} b(x) (w^2-u^2)(w^{\sigma -1}-u^{\sigma-1})\leq 0
\end{equation*}
yields
\begin{equation*}
(1-B^2)(1-B^{\sigma -1}) \int_{D_1} b(x)w^{\sigma +1} \leq 0.
\end{equation*}
Since $w>0$ and $b\geq 0$,  $b\not\equiv 0$ on $D_1$ this forces
$B=1.$ Thus $u=w$ on $D_1,$ and therefore $u=w$ on $D, $ as required.
\end{proof}

We remark that Proposition~\ref{prop 1.28} holds if the
coefficients $a(x)$ and $b(x)$ are only assumed to be continuous, and if
the functions $u$ and $v$ are in $C^1(D)\cap C^0(\overline D)$,
provided we interpret (\ref{1.29}) and (\ref{1.29bis}) in weak sense.
Under these weaker assumptions, the vector field $Z$ will be only continuous, in general,
but the proof may be carried out using a suitable version of the divergence
theorem (see, e.g., \cite{RS}, pp. 477--478).

Before stating  Theorem~\ref{thm 2.0}, we also need to
make precise the sense in which the set where $b(x)$ vanishes is small.

Let $a(x) \in C^0(M)$ and let  $L=\Delta +a(x)$. If $\Omega$ is a
non-empty open set, the first Dirichlet eigenvalue
$\lambda_1^L(\Omega)$ is variationally defined as in  Section 1 by
means of the formula
\begin{equation*}
\lambda_1^L(\Omega) =
\inf\Bigl\{ \int_{\Omega} |\nabla \phi|^2 - a(x) \phi^2\, :
\, \phi\in H^1_o (\Omega) ,\, \int_{\Omega}  \phi^2 = 1\Bigr\},
\end{equation*}
and, if $\Omega$ is bounded and both $\Omega$ and $a$ are sufficiently
regular, the infimum is attained and there exists a unique normalized
eigenfunction $v$ defined on
$\Omega$  satisfying
\begin{equation*}
\begin{cases}
\Delta v + a(x) v + \lambda_1^L (\Omega) v = 0 \quad \text{ on }
\,\, \Omega &\\
v>0 \,\text{ on } \, \Omega, \,\,\, v\equiv 0 \, \text{ on }\,
\bdr \Omega.
\end{cases}
\end{equation*}
We extend the definition to an arbitrary  bounded subset $S$ of
$M$, by setting
\begin{equation*}
\lambda_1^L(S) = \sup \lambda_1^L(\Omega),
\end{equation*}
where the supremum is taken over all open bounded sets with smooth boundary
$\Omega$ such that $S\subset \Omega$. Note that, by definition, if
$S=\emptyset$ then $\lambda_1^L(S)=+\infty.$
Finally, if $S$ is an
unbounded subset of $M$, we define
\begin{equation*}
\lambda_1^L(S) = \inf \lambda_1 (D\cap S),
\end{equation*}
where the infimum is taken over all bounded open sets with smooth
boundary.  Note that if $\{D_n\}$ is a increasing sequence of open sets
with smooth boundary which exhausts $M,$ then, by domain monotonicity,
$\lambda_1^L(S) = \lim_n\lambda_1 (D_n\cap S).$

Since the first Dirichlet eigenvalue of the Laplacian of a ball $B_r$
grows like $r^{-2}$ as $r\to 0,$ $\lambda_1^L(B_r)>0$ provided $r$ is
sufficiently small, and one may think that the condition $\lambda_1^L(S)>0$
expresses the fact that $S$ is small in a spectral sense.

This  notion of smallness is appropriate for our purposes.
Indeed,  P.~Li, L.-F.~Tam and D.~Yang, \cite{LTY},
have established the following relationship between
the first eigenvalue of the set where $b(x)$ vanishes, and the
existence of a non trivial super-solution of equation (\ref{2.0}).

\begin{theorem}
\label{thm LTY}
Let $a(x)$ and $b(x)$ be H\"older continuous functions on $M$,
with $b(x)\geq 0$ on $M$, and let $S_o=\{x \in M \, :\, b(x)=0\}$.
Let $\Omega$ be a bounded open domain in $M$, and let $L=\Delta+a(x).$
If equation (\ref{2.0}) has a positive super-solution on $\Omega,$ then
$\lambda_1^L(\Omega\cap S_o) \geq 0$.
\newline
Conversely, if $\lambda_1^L(\Omega\cap S_o) > 0$ then (\ref{2.0})
has a positive super-solution on $\Omega.$
\end{theorem}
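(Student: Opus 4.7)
The plan is to reduce both implications to statements about the linear operator $L=\Delta+a(x)$, exploiting the fact that on the zero set $S_o$ the equation (\ref{2.0}) degenerates to $Lu=0$. For necessity, the Agmon--Allegretto--Piepenbrink argument appearing in the proof of Proposition~\ref{prop 2.5} converts a positive super-solution of (\ref{2.0}) into positivity of the first Dirichlet eigenvalue of a small perturbation of $L$ on a neighborhood of $\Omega\cap S_o$. For sufficiency, a principal positive eigenfunction of $L$ on such a neighborhood, patched with a large constant on the complement, furnishes the desired super-solution.

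For necessity, let $u>0$ satisfy $\Delta u+au-bu^\sigma\leq 0$ on $\Omega$. Fix a smooth-boundary open $V$ with $\overline{\Omega\cap S_o}\subset V\Subset\Omega$, and set $M:=\sup_{\overline V}u<\infty$. For each small $\varepsilon>0$ the open neighborhood $V_\varepsilon:=V\cap\{b<\varepsilon\}$ of $\Omega\cap S_o$ satisfies
\[
\Delta u + (a-\varepsilon M^{\sigma-1})u \leq \Delta u+au-bu^\sigma \leq 0,
\]
so $u$ is a positive super-solution of $L-\varepsilon M^{\sigma-1}$ on $V_\varepsilon$. Setting $w=\log u$ and following the argument in the proof of Proposition~\ref{prop 2.5} (multiplying by a test function, integrating by parts, and applying Young's inequality) yields $\int_{V_\varepsilon}(a-\varepsilon M^{\sigma-1})\phi^2\leq\int_{V_\varepsilon}|\nabla\phi|^2$ for all $\phi\in C_c^\infty(V_\varepsilon)$, i.e.\ $\lambda_1^L(V_\varepsilon)\geq -\varepsilon M^{\sigma-1}$. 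Approximating $V_\varepsilon$ from outside by a smooth-boundary domain $\tilde V_\varepsilon\subset V\cap\{b<2\varepsilon\}$ still containing $\Omega\cap S_o$ and repeating the argument gives $\lambda_1^L(\tilde V_\varepsilon)\geq -2\varepsilon M^{\sigma-1}$. Since $\tilde V_\varepsilon$ is admissible in the supremum defining $\lambda_1^L(\Omega\cap S_o)$, letting $\varepsilon\downarrow 0$ produces the desired inequality $\lambda_1^L(\Omega\cap S_o)\geq 0$.

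For sufficiency, assume $\lambda_1^L(\Omega\cap S_o)>0$. Using the supremum definition and continuity of the first Dirichlet eigenvalue under domain perturbation, choose smooth-boundary open sets $V\Subset V'$ with $\Omega\cap S_o\subset V$ and $\lambda_1^L(V')>0$, and let $\phi>0$ be a corresponding principal eigenfunction on $V'$, so that $\Delta\phi+a\phi=-\lambda_1^L(V')\phi<0$ on $V'$; in particular $\phi$ is a super-solution of (\ref{2.0}) on $V'$, and $c:=\min_{\overline V}\phi>0$ because $V\Subset V'$. On $\Omega\setminus V$, the identity $S_o\cap(\Omega\setminus V)=\emptyset$ and the compactness of $\overline\Omega\setminus V$ supply a uniform lower bound $b\geq\delta>0$, so any constant $C$ with $C^{\sigma-1}\geq(\sup_{\overline\Omega}|a|)/\delta$ is a super-solution there. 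Taking $\alpha\geq C/c$ forces $\alpha\phi\geq C$ on $\overline V$ while $\alpha\phi<C$ near $\partial V'$, so
\[
v(x):=\begin{cases}\max\{\alpha\phi(x),C\},& x\in V',\\ C,& x\in\Omega\setminus V',\end{cases}
\]
defines a continuous positive function on $\Omega$ coinciding with $\alpha\phi$ on a neighborhood of $\overline V$ and with $C$ near $\partial V'$ and outside. Since the maximum of two super-solutions is a super-solution (in the viscosity sense, and can be mollified to a classical one), $v$ is the required positive super-solution.

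The principal obstacle lies in the sufficiency direction: the open neighborhood $V$ of $\Omega\cap S_o$ must be large enough that $b$ remains bounded below on $\Omega\setminus V$ (so that a constant super-solution is available), yet small enough --- together with a slight enlargement $V'$ --- that $\lambda_1^L(V')$ remains strictly positive. Reconciling these competing constraints, and cleanly handling the gluing region $V'\setminus V$ where the eigenfunction transitions to the constant, is the delicate step, resolved by exploiting domain-monotonicity and continuity of the first Dirichlet eigenvalue and the continuity of $b$. The necessity direction, by contrast, reduces transparently to the standard Allegretto--Piepenbrink argument once the perturbation $L-\varepsilon M^{\sigma-1}$ has been set up.
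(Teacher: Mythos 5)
First, note that the paper offers no proof of this statement: Theorem~\ref{thm LTY} is quoted verbatim from Li--Tam--Yang \cite{LTY}, so your argument can only be judged on its own terms. Your necessity half is essentially the standard (and, I believe, the intended) argument: on a neighborhood of $\Omega\cap S_o$ where $b<\varepsilon$ the super-solution $u$ satisfies $\Delta u+(a-\varepsilon M^{\sigma-1})u\leq 0$, and the logarithmic substitution exactly as in Proposition~\ref{prop 2.5} yields $\lambda_1^L(\tilde V_\varepsilon)\geq -2\varepsilon M^{\sigma-1}$; letting $\varepsilon\downarrow 0$ gives the claim. (You do silently assume that one can choose $V$ with $\Omega\cap S_o\subset V\Subset\Omega$ and $\sup_{\overline V}u<\infty$, which requires either $\overline{\Omega\cap S_o}\cap\partial\Omega=\emptyset$ or continuity of $u$ up to $\partial\Omega$; this should be flagged, but it is a boundary technicality.)

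The sufficiency half, however, contains a fatal error: the \emph{maximum} of two super-solutions is not a super-solution. The correct closure properties are the reverse ones --- suprema of sub-solutions are sub-solutions, infima of super-solutions are super-solutions. Concretely, at the interface $\{\alpha\phi=C\}$ your $v=\max\{\alpha\phi,C\}$ switches from the branch with the larger outward normal derivative to the one with the smaller, so its distributional Laplacian acquires a \emph{positive} singular measure on the interface, which is exactly the wrong sign for a super-solution (in one dimension, $\max\{1-x,1+x\}=1+|x|$ is convex, hence subharmonic, not superharmonic). This is precisely the sign condition the paper checks with care in the sub-solution gluing of Theorem~\ref{thm 2.1}, where the inequality $\beta'(R)\leq\alpha'(R)$ is imposed so that the concatenation is a genuine (weak) \emph{sub}-solution; your construction needs the analogous, opposite, inequality and does not have it. Nor is the fix a mere swap of $\max$ for $\min$: with $\min\{\alpha\phi,C\}$ the function degenerates to $\alpha\phi\to 0$ at $\partial V'$ (so it cannot be continued by the constant $C$ there), and on $V$ the constant $C$ is \emph{not} a super-solution (there $b$ may vanish while $a>0$), so $\min\{\alpha\phi,C\}$ fails to be a super-solution at points of $S_o$ where the minimum is attained by $C$. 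A correct construction must arrange that the glued function coincides with (a multiple of) the eigenfunction on a full neighborhood of $\Omega\cap S_o$ and transitions to the constant only in the annulus where $b$ is bounded below, with the normal-derivative jump of the correct sign; this is the genuinely delicate step of \cite{LTY} and it is missing here. Finally, your uniform lower bound $b\geq\delta>0$ on $\Omega\setminus V$ is also unjustified as stated, since $b$ may vanish at points of $\partial\Omega$, which belong to $\overline\Omega\setminus V$ but not to $\Omega\cap S_o$.
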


We remark that using  Theorem~\ref{thm LTY}  we may improve
Theorem~\ref{thm BRS} above replacing the assumption that $b(x)>0$ on M with the
assumption that $b$ is non-negative, and its zero set $S_o$ is
such that $\lambda_1^L(S_o)>0$. Indeed,
the strict positivity of $b$ is only used to guarantee that
on every bounded domain a suitably large constant is a
super-solution of equation (\ref{2.0}) (see \cite{BRS2}, p.184).

Note that, if we assume that  $S_o$ is a bounded domain with smooth boundary such that
$a(x)$ is positive somewhere in $S_o$, then the condition $\lambda_1^L(S_o)>0$
amounts to the fact that $1$ is strictly smaller that the principal
eigenvalue of the problem $\Delta u + \lambda a(x) u =0$ with
Dirichlet boundary conditions.  Recalling  the remark
after the statement of Theorem~\ref{thm BRS} we conclude that if
$\lambda_* < 1 < \lambda_1(L_\mu, S_o)$ then equation (\ref{2.0})
has a unique positive minimal solution on $M$. This again compares with Theorem~1
in \cite{DM2}.

We are now ready to state

\begin{theorem}
\label{thm 2.0} Let $a(x),$ $b(x)\in C^{0,\alpha}_{loc}(M)$  for
some $0<\alpha\leq 1$. Assume that $b(x)$ is non-negative, and
strictly positive off a compact set, and that, denoting with $S_0$
the set where $b(x)$ vanishes, we have $\lambda_1^L(S_o)>0$. If
$u_-\in C^0(M)\cap H^1_{loc}(M)$, $u_-\geq 0,$ $u_-\not\equiv 0$,
is a global sub-solution of equation (\ref{2.0}), then
(\ref{2.0}) has a maximal positive $C^2$ solution.
\end{theorem}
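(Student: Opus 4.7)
The strategy is the classical sub- and super-solution method on a smooth exhaustion of $M$. Since $b$ is strictly positive off a compact set, the zero set $S_o$ is compact, so we pick a smooth exhaustion $\{\Omega_n\}$ of $M$ by relatively compact open sets with $S_o\subset\Omega_1$. For each $n$ we then have $\Omega_n\cap S_o=S_o$ and hence $\lambda_1^L(\Omega_n\cap S_o)=\lambda_1^L(S_o)>0$, so \thmref{thm LTY} produces a positive $C^2$ super-solution $\tilde w_n$ of (\ref{2.0}) on $\Omega_n$. Because $\sigma>1$, multiplication by any constant $\lambda\geq 1$ preserves the super-solution inequality, so we rescale to $w_n:=\lambda_n\tilde w_n$ with $\lambda_n\to+\infty$ chosen fast enough that $w_n\geq u_-$ on $\overline{\Omega_n}$ and that $w_n|_{\bdr\Omega_n}$ grows without bound.

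On each $\Omega_n$ I would then apply the monotone iteration scheme between $u_-$ and $w_n$, starting from $w_n$ and iterating downward with Dirichlet datum $u=w_n$ on $\bdr\Omega_n$. Standard theory yields a classical solution $u_n\in C^{2,\alpha}(\Omega_n)\cap C^0(\overline{\Omega_n})$ of (\ref{2.0}) with $u_-\leq u_n\leq w_n$, which is moreover the maximal solution of this Dirichlet problem lying in the order interval $[u_-,w_n]$ (being the limit of a decreasing family of super-solutions dominating every element of the interval). The reason for iterating from the super-solution is precisely this built-in maximality, which will later transfer to $M$.

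The main obstacle is the passage to the limit, since the bound $u_n\leq w_n$ is $n$-dependent and $w_n$ need not stabilize on any fixed compact set. To extract uniform local $L^\infty$ bounds I would exploit the self-limiting term $-bu^\sigma$: on any compact $K\subset M\setminus S_o$ one has $b\geq b_0>0$, and a Keller--Osserman argument applied to $\Delta u_n\geq b u_n^\sigma-au_n$ furnishes a uniform estimate $u_n|_{K\setminus S_o}\leq C(K)$; on $S_o$, where $b\equiv 0$, one has $\Delta u_n+au_n\geq 0$, and the Harnack inequality propagates the bound on $\bdr S_o$ just obtained to the interior. Once $\{u_n\}$ is locally uniformly bounded, Schauder interior estimates give $C^{2,\alpha}_{\mathrm{loc}}$-bounds, and a diagonal subsequence converges in $C^2_{\mathrm{loc}}(M)$ to a positive $u\in C^{2,\alpha}_{\mathrm{loc}}(M)$ solving (\ref{2.0}) with $u\geq u_-$. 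For maximality, any other positive $C^2$ solution $v$ of (\ref{2.0}) on $M$ satisfies the same Keller--Osserman-type local bounds, so $\sup_{\bdr\Omega_n}v$ is controlled while $w_n|_{\bdr\Omega_n}\to+\infty$; thus $w_n\geq v$ on $\bdr\Omega_n$ for all large $n$, and Proposition~\ref{prop 1.28} applied with the solution $v|_{\Omega_n}$ and the super-solution $u_n$ (whose boundary datum is $w_n\geq v$) yields $v\leq u_n$ on $\Omega_n$. Passing to the limit gives $v\leq u$, establishing that $u$ is the maximal positive $C^2$ solution.
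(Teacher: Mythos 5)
Your architecture is essentially the paper's: exhaustion, super-solutions from \thmref{thm LTY}, monotone iteration between $u_-$ and the super-solution, Keller--Osserman-type a priori bounds away from $S_o$ (the paper cites Lemma~2.6 of \cite{PRS2} for this), compactness, and Proposition~\ref{prop 1.28} for maximality. Your device of rescaling $w_n=\lambda_n\tilde w_n$ with $\lambda_n\to+\infty$ plays the role of the paper's boundary data $u=n\to+\infty$ on a fixed domain, and the maximality argument goes through once one notes that the a priori bound on $\bdr\Omega_n$ is uniform over \emph{all} nonnegative solutions, so $\lambda_n$ can be chosen to dominate it.

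The genuine gap is the uniform bound on a neighbourhood of $S_o$. There $u_n$ is only a subsolution of $L=\Delta+a(x)$, since $\Delta u_n+au_n=bu_n^\sigma\ge 0$, and the Harnack inequality does not ``propagate the bound on $\bdr S_o$ to the interior'': Harnack compares $\sup$ and $\inf$ of nonnegative \emph{solutions} on interior compacta and gives no control of a subsolution's interior values by its boundary values. What is needed is a maximum principle for $L$ on a neighbourhood of $S_o$, and since $a$ may be positive there this fails in general: if $a\equiv\lambda_1(\Omega')$ and $\phi_1$ is the first Dirichlet eigenfunction of $-\Delta$ on $\Omega'$, the functions $C+n\phi_1$ are subsolutions of $L$ with fixed boundary values and unbounded interior supremum. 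This is exactly where the hypothesis $\lambda_1^L(S_o)>0$ must re-enter (your proposal uses it only to invoke \thmref{thm LTY}). The paper's fix: choose open sets $S_o\subset\Omega\subset\overline\Omega\subset\Omega'$ with $\lambda_1^L(\Omega')>0$, let $\phi>0$ be the principal eigenfunction of $L$ on $\Omega'$, pick $c$ with $c\phi>C_2\ge u_n$ on $\bdr\Omega$, and show $u_n\le c\phi$ on $\Omega$ by the generalized maximum principle applied to $w=u_n-c\phi$ on the set $A=\{w>0\}$ (there $\Delta w+aw\ge0$ and $w=0$ on $\bdr A$, forcing $w/\phi\equiv0$). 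Without this eigenfunction comparison the local bounds near $S_o$, and hence your compactness step, are not justified.
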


\begin{proof}
Let $D_k$ be an increasing exhaustion of $M$ by open domain with
smooth boundary, such that $S_0 \subset D_k \subset \bar D_k
\subset D_{k+1}$ for every $k$. Fix $k$ in $\N.$ Since
$\lambda_1^L(S_o)>0$, by Theorem~\ref{thm LTY} there exists a
$C^2$ positive function $v$ satisfying
\begin{equation*}
\Delta v + a(x) v - b(x) v^\sigma \leq 0 \text{ on } \, D_{k+1}.
\end{equation*}
Since $\bar D_k$ is compactly contained in $D_{k+1},$ $\inf_{D_k}
v>0$ and $u_-$is bounded  on $\bar D_k$. Thus, given
$n\geq \max_{\bar D_k} u_-,$ there exists $C>0$ large enough that
the function $v_+=Cv$ satisfies
\begin{equation*}
\begin{cases}
\Delta v_+ + a(x) v_+ - b(x) v_+^\sigma \leq 0 \quad \text{ on }\, D_k&\\
v_+\geq n\geq \max_{\bar D_k} u_- \,\text{ on } \, \bdr D_k
\,\text{ and } \, v_+ \geq u_- \,\text{ on }\, \bar D_k.&
\end{cases}
\end{equation*}
The monotone iteration scheme yields a solution $u_{k,n}$ of the
boundary value problem
\begin{equation}
\label{1.26}
\begin{cases}
\Delta u +a(x) u - b(x) u^{\sigma} = 0 &\text{on } B_k\\
u=n &\text{on } \, \bdr B_k.
\end{cases}
\end{equation}
We now show that the sequence $\{u_{k,n}\}$  is uniformly bounded
with respect to $n\in \N$ on compact subsets of $D_k.$

Assume first that $K$ is a compact subset of $D_k$ which does not
intersect $S_0$. Then we may find a positive constant $b_o$ and a
finite number of disjoint open balls $B_i$ which cover $K$ such
that $b(x)\geq b_o$ on each $B_i$. Applying Lemma~2.6 in
\cite{PRS2} we deduce that there exists a constant $C_1=C_1(K)>0$
such that
\begin{equation}
\label{ub in K_1} u_{k,n}(x)\leq C_1 \quad \forall x\in
K,\,\,\forall n.
\end{equation}

Next we show that $u_{k,n}$ is uniformly bounded in a neighborhood
of $S_0$. By definition there exist  open sets with smooth
boundary $\Omega$ and $\Omega'$ such that $S_0\subset
\Omega\subset \bar \Omega \subset\Omega' \subset \bar \Omega'
\subset D_k$ and $\lambda_1^L(\Omega')>0$.

Note that since $\bdr \Omega$ is a compact subset of $D_k$ which
does not intersect $S_0$, there exists a constant $C_2$ such that
$u_{k,n}\leq C_2$ on $\bdr \Omega$. Next, let $\phi$ be a positive
eigenfunction for $L$ belonging to $\lambda^L_1(\Omega')>0$. Since
$\phi$ is positive on $\Omega'$, it is bounded away from zero on
$\bar \Omega$ and there exists a positive constant $c$ such that
$c\phi
> C_2$ on $\bar \Omega$.

Note that
\begin{equation*}
\Delta(c\phi ) + a(x) (c\phi) = - \lambda_1^L(\Omega'')(c\phi)< 0
,
\end{equation*}
while
\begin{equation*}
\Delta u_{k,n} + a(x) u_{k,n} = b(x)  u_{k,n}^\sigma \geq 0
\end{equation*}
on $\Omega$, and $u_{k,n}\leq C_2< c \phi$ on $\bdr \Omega,$

We claim that $u_{k,n}\leq c \phi$ on $\Omega$. Indeed, assume
that this is not so, and let $A=\{x\in \Omega' \, : \, u_{k,n} -
c\phi >0\}$. Then $A$ is non-empty and $\bar A \subset \Omega$,
and we deduce that $w=u_{k,n} -c \phi$ attains a positive maximum
in $A.$ On the other hand $w$ satisfies
\begin{equation*}
\begin{cases}
\Delta w + a(x) w \geq 0 &\text{in }\, A  \\
w= 0 &\text{on }\,\bdr A,
\end{cases}
\end{equation*}
and therefore, by the generalized maximum principle, $w/\phi$ is
constant on $A,$ and since it vanishes on $\bdr A$ we conclude
that $w/\phi = 0$ on $A$, that is, $w=0$ on $A$, contradiction.

Thus $u_{k,n} \leq c\phi\leq C_2$ on $\bar \Omega$ and it follows
easily that $u_{k,n}$ is uniformly bounded on compact subset of
$D_k$.

By  interior elliptic estimates, a subsequence of $u_{k,n}$
converges in $C_{loc}^2$ to a solution $u_k^\infty$ of
\begin{equation}
\label{1.27}
\begin{cases}
\Delta u +a(x) u - b(x) u^{\sigma} = 0 &\text{on } D_k\\
u=+\ty  &\text{on } \, \bdr D_k.
\end{cases}
\end{equation}
We consider the sequence $\{u_k^\ty\}$. Clearly, $u_k^\ty\geq
u_->0$, and an exhaustion argument and  Proposition~\ref{prop
1.28} show that
\begin{equation}
\label{1.28} u_{k+1}^\ty \leq u_k^\ty \qquad \text{ on }\,\,
\overline{D}_k.
\end{equation}

Since $\{u_k^\ty\}$ is monotone non-increasing, it converges  to a
function $u$ which solves (\ref{2.0}) and satisfies $u\geq u_-
\geq 0$, $u_-\not\equiv 0$ on $M$. If $u_1$ is another positive solution
of (\ref{2.0})
on $M$, then $u_1\leq u_k^\ty$ by Proposition~\ref{prop 1.28}, and
therefore $u_1 \leq u$, thus proving the maximality of $u$.
Finally, $u$ is strictly positive for otherwise the non-negative function $u_-$
would attain a zero minimum, thus violating  the minimum  principle (\cite{GT}, p.
35).
\end{proof}

\par
It is worth pointing out the following consequence of
Proposition~\ref{prop 1.28}.

\begin{proposition}
\label{prop 1.32} Let $a(x)$, $b(x)\in C^{0}(M)$,
$0<\alpha<1$,  and $b(x)\geq 0,$
$b(x)\not\equiv 0$. Then the problem
\begin{equation}
\label{1.33}
\begin{cases}
\rmi \, \, \Delta u + a(x) u - b(x) u^\sigma =0, \quad \sigma >1 &\\
\rmii\, \lim_{r(x)\to +\ty} u(x) = L>0 &
\end{cases}
\end{equation}
has at most one positive $C^2$ solution.
\end{proposition}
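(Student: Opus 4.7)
The plan is to assume that $u_1$ and $u_2$ are two positive $C^2$ solutions of (\ref{1.33}) on $M$ and to prove that $u_1\le u_2$; the reverse inequality will then follow by symmetry. The natural tool is Proposition~\ref{prop 1.28}, but we cannot apply it directly to $u_1$ and $u_2$ on a large domain since we have no a priori control on their relative values on the boundary. The idea is instead to compare $u_1$ with a suitably scaled super-solution built from $u_2$, and then let the scaling factor tend to $1$.

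The crucial algebraic observation is that, because $\sigma>1$ and $b\ge 0$, for any constant $c>1$ the function $cu_2$ is a positive super-solution of the equation in (\ref{1.33}) on $M$. Indeed, since $u_2$ solves the equation, a direct computation gives
\begin{equation*}
\Delta(cu_2)+a(x)(cu_2)-b(x)(cu_2)^\sigma = b(x)\, u_2^\sigma\, c\bigl(1-c^{\sigma-1}\bigr)\le 0,
\end{equation*}
and this \emph{sub-homogeneity} of the nonlinear term is what drives the argument.

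Now fix $\delta\in (0,L)$ and use the asymptotic condition in (\ref{1.33}) to pick $R_0>0$ large enough that
\begin{equation*}
L-\delta\le u_i(x)\le L+\delta\quad\text{for every }x\in M\setminus B_{R_0},\ i=1,2,
\end{equation*}
and so that $b\not\equiv 0$ on $B_{R_0}$. Set $c=(L+\delta)/(L-\delta)>1$, so that $cu_2\ge c(L-\delta)=L+\delta\ge u_1$ on $M\setminus B_{R_0}$. Choose any connected, bounded open set $D$ with smooth boundary with $\overline{B}_{R_0}\subset D$; then $\partial D\subset M\setminus B_{R_0}$, hence $u_1\le cu_2$ on $\partial D$. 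Proposition~\ref{prop 1.28}, applied to the positive solution $u_1$ and the positive super-solution $cu_2$ on $D$, gives $u_1\le cu_2$ on $\overline{D}$. Together with the estimate on $M\setminus B_{R_0}$, this yields $u_1\le cu_2$ on all of $M$. Letting $\delta\to 0^+$, and therefore $c\to 1^+$, we obtain $u_1\le u_2$ on $M$. Exchanging the roles of $u_1$ and $u_2$ completes the proof.

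The only technical nuisance is the smoothness of $\partial D$ required by Proposition~\ref{prop 1.28}, but this is harmless: we are free to pick $D$ from any smooth exhaustion of $M$ containing $\overline{B}_{R_0}$. Conceptually, everything hinges on the scaling trick producing a super-solution, which lets us absorb the small asymptotic fluctuations of $u_1$ and $u_2$ near infinity into a multiplicative factor that we then send to $1$.
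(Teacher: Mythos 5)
Your proof is correct and is essentially the same as the paper's: both exploit the sub-homogeneity of the nonlinearity (since $\sigma>1$ and $b\geq 0$, $cv$ is a super-solution for any $c>1$), compare $u_1$ against this scaled super-solution via Proposition~\ref{prop 1.28} on a large domain where the boundary inequality follows from the common limit $L>0$, and then let $c\to 1^+$. The only small difference is that the paper applies the comparison on balls $B_R$ directly, whereas you explicitly pass to a smooth exhaustion domain $D\supset\overline{B}_{R_0}$, which is a harmless and slightly more careful handling of the smooth-boundary hypothesis in Proposition~\ref{prop 1.28}.
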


\begin{proof}
Let $u,$ $v$ be positive $C^2$ solutions of (\ref{1.33}). Choose
any $\e>0$ and observe that, since $b$ is non-negative, the
function $w_\epsilon = (1+\epsilon) v$ is a super-solution of
(\ref{1.33}) \rmi satisfying
\begin{equation}
\label{1.34}
\lim_{r(x) \to +\ty} w_\epsilon (x) = (1+\epsilon)L.
\end{equation}
Fix $R_0>0$ sufficiently large, so that for every $R>R_0$ we have
$b(x)\not\equiv 0$ on $B_R$ and $w_\epsilon - u>0$ on $\bdr B_R$.
The latter is possible because of the limit relations
(\ref{1.33}) \rmii and (\ref{1.34}). It follows from
Proposition~\ref{prop 1.28} that $w_\epsilon \geq u$ on $B_R$ for
every $R\geq R_0$. Thus, $u(x)\leq (1+\epsilon) v(x)$ on $M$, and
since $\epsilon>0$ was arbitrary, $u\leq v$ on $M.$ Interchanging
$u$ and $v$ yields the reverse inequality, and equality follows.
\end{proof}

We remark that the assumption $L>0$ in the statement of the
proposition cannot be weakened to $L\geq 0$. Indeed, in
\cite{BRS2}, pp. 214--215, it is shown that on $m$-dimensional
hyperbolic space $\mathbb{H}^m,$ equation (\ref{1.33}) with
$a(x)\equiv m(m-2)/4$, $b(x) \equiv 1$  and $\sigma= (m+2)/(m-2)$
has a family of positive distinct radial solutions
which tend to zero at infinity at the same rate.

It may also be worth noting that  the assumptions on $u$ and $v$ in the
uniqueness result obtained above may be weakened provided some conditions on
the coefficients $a$ and $b$ and on the volume growth of the manifold are
imposed.

\begin{theorem}
\label{thm comparison}
Let $a(x),$ $b(x)\in C^0(M)$ and assume that, for some $C>0$ and $0\leq \mu<2,$
\begin{equation}
\label{b lower estimate}
\rmi \, b(x) \geq C (1+r(x))^{-\mu} \quad\text{and}\quad
\rmii \, \sup_M
\frac{a_{-}}{b} <+\infty.
\end{equation}
Assume that $u$ and $v$ are $C^2$ nonnegative solutions of
\begin{equation*}
\Delta u +a(x) u - b(x) u^{\sigma} \geq 0\geq
\Delta v + a(x) v - b(x) v^\sigma
\quad \sigma >1,
\end{equation*}
on $M,$ satisfying
\begin{equation*}
\liminf_{r(x)\to \infty} v(x)>0,
\quad
\limsup_{r(x)\to \infty} u(x) <+\infty.
\end{equation*}
If
\begin{equation}
\label{vol growth}
\liminf_{r\to +\infty} \frac{\log \vol B_r}{r^{2-\mu}}<+\infty,
\end{equation}
then $u\leq v.$
\end{theorem}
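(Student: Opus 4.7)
My strategy is to compare $u$ and $v$ through the ratio $w=u/v$, exploiting the fact that in the resulting inequality for $w$ the coefficient $a(x)$ cancels out. First I note that $v>0$ on all of $M$: writing the supersolution inequality as $\Delta v + (a - bv^{\sigma-1})v\leq 0$ and applying the strong minimum principle together with $\liminf_{r\to\infty} v>0$, one concludes $v\geq v_*>0$ globally. Hence $w\in C^2(M)$ is well defined, bounded (since $\limsup u<\infty$ and $\liminf v>0$), and it suffices to show $w\leq 1$.

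The central computation is the Picone-type identity $\div{v^2\nabla w} = v\,\Delta u - u\,\Delta v$, immediate from $v^2\nabla w = v\nabla u - u\nabla v$. Substituting $\Delta u\geq -au + bu^\sigma$ and $\Delta v\leq -av + bv^\sigma$, the $a$-contributions cancel identically and we obtain
\[
\div{v^2\nabla w}\;\geq\;b\,uv\,(u^{\sigma-1}-v^{\sigma-1})\;=\;b\,v^{\sigma+1}\bigl(w^\sigma - w\bigr).
\]
Set $W=(w-1)_+$. This is supported in $\{u>v\}$, where $v<u\leq \sup_M u=:U<\infty$, so $v\leq U$ on the support of $W$; outside a compact set $v\geq v_0>0$. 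Combining with the elementary convexity estimate $t^\sigma - t \geq c_0(t-1)$ valid on $[1, w_{\max}]$ with $c_0=c_0(\sigma,w_{\max})>0$ yields, weakly on $M$,
\[
\div{v^2\nabla W}\;\geq\;c_1\,b(x)\,W, \qquad c_1>0.
\]

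To conclude $W\equiv 0$ I would apply a weak maximum principle at infinity for the drift Laplacian $v^{-2}\div{v^2\nabla\cdot}$ tailored to the decay $b\geq C(1+r)^{-\mu}$ and the volume-growth condition $\liminf_{r\to\infty}\log\vol B_r / r^{2-\mu}<\infty$. Concretely, the standard Caccioppoli computation with cutoff $\psi=\chi(r(x))$ and the bound $v\leq U$ on the support of $W$ gives $c_1\int\psi^2 bW^2\leq U^2\int W^2|\nabla\psi|^2$; a judicious choice of radial $\chi$, equal to $1$ on $[0,R]$, to $0$ on $[2R,\infty)$, with $|\chi'(r)|$ of order $R^{-(1-\mu/2)}(1+r)^{-\mu/2}$ on the transition annulus $[R,2R]$, arranges $|\nabla\psi|^2\leq C'R^{-(2-\mu)}(1+r)^{-\mu}\leq C''R^{-(2-\mu)}b$ there by the lower bound for $b$, and reduces the Caccioppoli inequality to the self-improving estimate
\[
\int_{B_R}b\,W^2 \;\leq\; \frac{C}{R^{2-\mu}}\int_{B_{2R}\setminus B_R}b\,W^2,
\]
which, iterated along a sequence $R_k\to\infty$ extracted from the $\liminf$ volume-growth hypothesis, forces $\int bW^2\to 0$ and so $W\equiv 0$. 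The main technical hurdle is precisely this coupled cutoff/iteration step: balancing the decay of $b$, the shape of $\chi$, and the growth of $\vol B_r$ so that the exponent $2-\mu$ appears symmetrically on both sides and the iteration closes. This is a Phragm\'en--Lindel\"of / Khas'minskii-style weak maximum principle at infinity, in the same spirit as the authors' own \lemref{lemma 3.1} referenced in the introduction.
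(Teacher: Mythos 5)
Your strategy is genuinely different from the paper's and, as far as the algebra goes, correct. The paper forms the linear combination $\phi=u-\beta v$ with $\beta=\sup_M u/v$; after a mean--value decomposition of $u^\sigma-(\beta v)^\sigma$ the coefficient $a(x)$ survives through the term $-a(x)\phi\geq a_-(x)\phi$, which is then controlled by dividing by $b$ --- this is precisely where hypothesis (\ref{b lower estimate})(ii) is used. Your Picone computation $\div{v^2\nabla w}=v\Delta u-u\Delta v$ makes $a(x)$ cancel identically, and the chain leading to the weak inequality $\div{v^2\nabla W}\geq c_1\, b\,W$ with $W=(w-1)_+$ is sound. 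If your final step worked, the theorem would hold without (ii), which would be a genuine strengthening.

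The gap is the concluding Caccioppoli iteration, and it is not a technicality. Your Caccioppoli inequality is correct and, with your choice of cutoff, yields $F(R)\leq CR^{-(2-\mu)}\bigl[F(2R)-F(R)\bigr]$ where $F(R)=\int_{B_R}bW^2$, i.e. $F(2R)\geq\bigl(1+C^{-1}R^{2-\mu}\bigr)F(R)$. If $F(R_0)>0$, dyadic iteration gives $\log F(2^NR_0)\gtrsim N^2$. But the volume--growth hypothesis (\ref{vol growth}) permits $\vol B_r$ (and hence, even granting an upper bound on $b$, which is not among the hypotheses, also $F(r)$) to grow like $\exp(\Lambda r^{2-\mu})$; along $r=2^NR_0$ this is $\exp\bigl(\Lambda R_0^{2-\mu}2^{(2-\mu)N}\bigr)$, \emph{exponential} in $N$. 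There is no contradiction: the iteration closes only under polynomial-type volume growth (roughly $\log\vol B_r=O((\log r)^2)$), far short of (\ref{vol growth}). You flag exactly this coupling as ``the main technical hurdle'' but do not resolve it, and the resolution is not a matter of tuning the cutoff $\chi$.

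The paper avoids this entirely by reducing to a single application of a nontrivial weak maximum principle (Theorem~A of \cite{PRS1}) for the operator $b^{-1}\Delta$, proved in that reference by a more refined mechanism than dyadic iteration; that theorem is what absorbs the full strength of (\ref{vol growth}). To push your route through one would need an analogous weak maximum principle at infinity for the weighted drift operator $\frac{1}{b\,v^2}\div{v^2\nabla\cdot}$ (or to rewrite $\div{v^2\nabla W}$ as $\Delta W$ plus a drift with $\nabla\log v$, and control that drift --- but you have no bound on $\nabla v$). Such a result is neither proved in your proposal nor cited, so the argument is incomplete precisely at the point where the geometric hypothesis (\ref{vol growth}) has to be used.
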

\begin{proof}
Note first of all that, by the maximum principle (see \cite{GT},
p. 35), $v$ is strictly positive, and therefore,  by the liminf
condition,  it is bounded away from $0$ on $M$. Also,  $u$
is bounded above on $M.$
We may assume that $u$ is not identically zero, for else there is
nothing to prove. Thus, $\beta=\sup_M \frac u v $  is finite  and
strictly positive. The conclusion of the theorem amounts to saying that
$\beta\leq 1$. Assume by contradiction that $\beta>1,$ and let
$\phi = u-\beta v$.

Clearly $\phi\leq 0$. We claim that $\sup _M\phi =0.$
Indeed, there exists a sequence $x_n$ such that $\frac{u(x_n)}{v(x_n)}\to
\beta >0,$ and since $u(x_n)$ is bounded above, so must  be
$v(x_n)$ (for else $\beta=0$), and then
\begin{equation*}
\phi(x_n) = v(x_n) \bigl(\frac{u(x_n)}{v(x_n)}-\beta\bigr) \to 0
\quad \text{as}\,\, n\to +\infty,
\end{equation*}
as claimed.

We write
\begin{equation}
\label{lapl phi}
\begin{split}
\Delta \phi &\geq
 \Delta ( u-\beta v) = -a(x) [u-\beta v] + b(x)
[u^\sigma -\beta v^\sigma] \\
&= - a(x) \phi + b(x) [u^\sigma - (\beta v)^\sigma] + b(x)
v^\sigma [\beta^\sigma -\beta].
\end{split}
\end{equation}
By the mean value theorem
we have
\begin{equation*}
[u^\sigma - (\beta v)^\sigma](x) = h(x)(u-\beta v) = h(x) \phi
\end{equation*}
where
\begin{equation*}
h(x) = \frac{\sigma}{u(x) - \beta v(x)}\int_{\beta v(x)}^{u(x)}
t^{\sigma -1} dt
\end{equation*}
is continuous, and  nonnegative on $M.$ Further, since $u$ is bounded above,
it follows $h$ is bounded above by a constant $H$ on the set
$\{x : \phi(x) > -1\}.$
Also, since $\beta>1$, $\sigma>1$ and $v$ is bounded away from
zero,
\begin{equation*}
v^\sigma [\beta^\sigma -\beta] \geq
2c >0,
\end{equation*}
for some positive constant $c.$
Inserting the above expressions in (\ref{lapl phi}), noting that,
since $\phi$ is non-positive, $-a(x) \phi \geq a_-(x) \phi$,
and dividing through by $b(x),$
we obtain
\begin{equation*}
\frac 1{b} \Delta \phi \geq \bigl( \frac{a_-} b + h(x) \bigr) \phi
+ 2c.
\end{equation*}
Now let $\epsilon\in (0,1)$ be such that
\begin{equation*}
(\sup\frac {a_-}{b} + H) \epsilon < c,
\end{equation*}
and let $\Omega_\epsilon = \{x : \phi (x) >-\epsilon\}$, which
is not empty since $\sup \phi =0.$ Then
\begin{equation*}
\frac1{b(x)} \Delta \phi \geq c >0 \quad \text{on }
\,\Omega_\epsilon.
\end{equation*}

On the other hand, since the  volume growth condition (\ref{vol growth})
holds, and $b(x)$ satisfies the lower estimate   (\ref{b lower estimate}) (i),
Theorem~A in \cite{PRS1} applies, and the weak maximum principle holds,
namely,
$$
\inf_{\Omega_\epsilon} \frac 1{b(x)} \Delta \phi \leq 0
$$
thus yielding the  required contradiction.
\end{proof}

As an immediate corollary we have

\begin{corollary}
\label{cor uniqueness}
Let $a$ and $b$ satisfy the conditions listed in
Theorem~\ref{thm comparison}, and let
$u$ and $v$ be  nonnegative solutions of
\begin{equation*}
\Delta u +a(x) u -b(x) u^\sigma=0.
\end{equation*}
If both $u$ and $v$ satisfy the condition
\begin{equation}
\label{liminf limsup condition}
0<\liminf_{r(x)\to \infty} u(x) \leq \limsup_{r(x)\to \infty} u(x)
<+\infty,
\end{equation}
and  (\ref{vol growth}) holds, then $u=v$.
\end{corollary}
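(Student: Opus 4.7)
The plan is to read the corollary as a direct double application of Theorem~\ref{thm comparison}, exploiting the symmetry between $u$ and $v$. Since the hypothesis (\ref{liminf limsup condition}) is symmetric in the two solutions, it supplies both of the asymptotic bounds that Theorem~\ref{thm comparison} requires, whichever of $u,v$ is designated as the sub- or super-solution.

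The first step is to observe that any $C^2$ solution of
\begin{equation*}
\Delta w + a(x) w - b(x) w^\sigma = 0
\end{equation*}
is automatically both a sub-solution and a super-solution, so the differential inequality chain in the hypothesis of Theorem~\ref{thm comparison} is trivially satisfied by the pair $(u,v)$ in either order. Moreover, the conditions on $a,b$ and the volume growth (\ref{vol growth}) in Corollary~\ref{cor uniqueness} are exactly those assumed in Theorem~\ref{thm comparison}, so there is nothing extra to verify there.

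Next, I would apply Theorem~\ref{thm comparison} treating $u$ as the sub-solution and $v$ as the super-solution. The required asymptotic hypotheses
\begin{equation*}
\limsup_{r(x)\to\infty} u(x) <+\infty, \qquad \liminf_{r(x)\to\infty} v(x) >0
\end{equation*}
follow immediately from (\ref{liminf limsup condition}) applied to $u$ and $v$ respectively. The theorem then yields $u\leq v$ on $M$.

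Finally, I would swap the roles of $u$ and $v$: the condition (\ref{liminf limsup condition}) holds for both solutions, so the very same argument with $v$ as the sub-solution and $u$ as the super-solution gives $v\leq u$ on $M$. Combining the two inequalities produces $u=v$, which is the desired conclusion. There is no real obstacle here; the only thing to be careful about is checking that the limsup/liminf hypotheses of Theorem~\ref{thm comparison} are symmetric consequences of (\ref{liminf limsup condition}), and this is immediate.
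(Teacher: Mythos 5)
Your proof is correct and is exactly the argument the paper intends: the corollary is stated as an "immediate" consequence of Theorem~\ref{thm comparison}, obtained by applying it twice with the roles of $u$ and $v$ interchanged, each application's asymptotic hypotheses being supplied by the symmetric condition (\ref{liminf limsup condition}).
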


As observed above,  condition (\ref{liminf limsup condition})
amounts to requiring that $u$ and $v$ are bounded and bounded
away from zero on $M.$ We also note that the family of functions
mentioned in the remark that follows Proposition ~\ref{prop 1.32}
also  shows that uniqueness fails if we do not assume that
the liminf of $u$ and $v$ are strictly positive.

It is worth mentioning the following geometric consequence.

\begin{corollary}
\label{cor Schwarz}
Let $(M, \langle \, , \, \rangle)$ be a complete Riemannian
manifold of dimension $m\geq 3$ and scalar curvature $s(x)$
satisfying
\begin{equation*}
s(x) \leq - C (1+r(x))^{-\mu}
\end{equation*}
for some constants $C>0$ and $0\leq \mu<2.$ Assume that (\ref{vol
growth}) holds. Then any conformal diffeomorphism of $M$ into
itself which preserves the scalar curvature and whose stretching
factor $u$ satisfies (\ref{liminf limsup condition}), is an isometry.
\end{corollary}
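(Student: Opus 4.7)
The plan is to interpret the condition on the conformal diffeomorphism $f:M\to M$ as a PDE for the stretching factor $u$, and then apply Corollary~\ref{cor uniqueness} to force $u\equiv 1$, which means $f^{*}g=g$. Concretely, I would write the pullback metric as $f^{*}\langle\,,\,\rangle = u^{4/(m-2)}\langle\,,\,\rangle$ and recall the Yamabe-type transformation law for scalar curvature under a conformal change,
\begin{equation*}
s_{f^{*}g}\, u^{(m+2)/(m-2)} = -\frac{4(m-1)}{m-2}\,\Delta u + s(x)\,u,
\end{equation*}
where $\Delta$ is the Laplace--Beltrami operator of the background metric. Since $f$ preserves scalar curvature, $s_{f^{*}g}(x)=s(x)$, and setting $c_{m}=4(m-1)/(m-2)$ and $\sigma=(m+2)/(m-2)>1$ this identity rearranges to
\begin{equation*}
\Delta u + a(x)\,u - b(x)\,u^{\sigma} = 0,
\qquad a(x):=-\frac{s(x)}{c_{m}}, \quad b(x):=-\frac{s(x)}{c_{m}}.
\end{equation*}

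The key observation is that $a(x)=b(x)$, so the constant function $v\equiv 1$ is itself a positive solution of the very same equation. Thus $u$ and $v\equiv 1$ are two nonnegative solutions to which I would apply the uniqueness result of Corollary~\ref{cor uniqueness}.

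It remains to verify that all hypotheses of Corollary~\ref{cor uniqueness} are met. The curvature bound $s(x)\leq -C(1+r(x))^{-\mu}$ gives $b(x)=-s(x)/c_{m}\geq (C/c_{m})(1+r(x))^{-\mu}$ with $0\leq\mu<2$, which is exactly \eqref{b lower estimate}(i). Since $a(x)=b(x)>0$, the negative part $a_{-}$ vanishes identically, so $\sup_{M}(a_{-}/b)=0<\infty$, yielding \eqref{b lower estimate}(ii). The liminf/limsup condition \eqref{liminf limsup condition} holds for $u$ by assumption and trivially for the constant $1$, and the volume growth hypothesis \eqref{vol growth} is assumed. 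Corollary~\ref{cor uniqueness} therefore forces $u\equiv 1$, hence $f^{*}\langle\,,\,\rangle=\langle\,,\,\rangle$, i.e.\ $f$ is an isometry.

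I expect no substantial obstacle: once the Yamabe transformation is written down, the fact that the scalar-curvature-preserving condition produces the balanced coefficients $a=b$ is what reduces everything to uniqueness for the logistic equation, and all the analytic hypotheses of Corollary~\ref{cor uniqueness} are immediate consequences of the assumed decay of $s(x)$ and the volume growth bound. The only bookkeeping detail to handle with care is the sign convention in the conformal transformation formula, which must be reconciled with the sign of $s(x)$ so that both $a$ and $b$ end up positive.
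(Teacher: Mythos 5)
Your proof is correct and is essentially the paper's own argument: both reduce to the Yamabe transformation law with $a(x)=b(x)=-s(x)/c_m$ and invoke Corollary~\ref{cor uniqueness}, with the constant function $1$ as the second solution (the paper leaves this last step implicit, you spell it out). The verification of the hypotheses of Corollary~\ref{cor uniqueness} is exactly as you describe.
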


\begin{proof}
Let $\phi: M\to M$ be a conformal diffeomorphism. Then $\phi^\star \langle \, ,\,\rangle =
u^{\frac 4{m-2}} \langle \,,\,\rangle$ where $u$ is the stretching
factor. Since $\phi$ preserves the scalar curvature, $u$ is a
solution of
\begin{equation*}
c_m \Delta u - s(x) u + s(x) u^{\frac{m+2}{m-2}} = 0,
\end{equation*}
(see, e.g., \cite{PRS1}, p. 1319 ff), and the result follows at
once from Corollary~\ref{cor uniqueness}.
\end{proof}

We remark that here we require that $u$ is bounded above and away
from zero, so that the conformal diffeomorphism $\phi$ is a
quasi-isometry. By contrast, in \cite{PRS1} Corollary 3.4, $\phi$
is not assumed to be a quasi-isometry, but  the scalar curvature $s(x)$ is
assumed to be bounded below.

We now proceed with the main result of this section,
Theorem~\ref{thm 2.1}, where we show that, under suitable assumptions
on the coefficients, equation (\ref{2.0}) has a globally defined
positive sub-solution, and therefore, a maximal positive solution.
The proof is based on the method of super and sub-solutions. This is
achieved by constructing  a sub-solution inside and outside a suitable
ball in such a way that they can be glued together to yield a global
sub-solution.

We begin with the following lemma, which will be the key ingredient in
the construction of a sub-solution in the complement of a ball.

\begin{lemma}
\label{lemma 1.1new} Let $A(r),$ $B(r)\in C^0([0, +\infty))$ with
$A(r)\geq 0$ and $B(r)>0$ on $[0,+\infty)$. Let  $g$ be a
non-decreasing smooth function on $[R, +\infty)$,  for some $R>0$.
 Then, given $T>0$ and $\sigma>1$ , the
problem
\begin{equation}
\label{radial annulus}
\begin{cases}
\alpha'' + (m-1) \ds{\frac{g'}g} \alpha' - A(r) \alpha -
B(r)\alpha^\sigma =0 \, \text{ on }\, (R,R+T)&\\

\alpha(R) = \alpha_o , \,\, \alpha(R+T)= 0
\end{cases}
\end{equation}
has a $C^2$  solution $\alpha$ on $[R, R+T]$. Furthermore
$\alpha >0$ and $\alpha'<0$ in $[R,R+T)$ and for every $T_o\in (0,T]$ the
following estimate holds
\begin{equation}
\label{1.3}
|\a'(R)| \leq
\Bigl\{\frac{g(R+T_o)}{g(R)}\Bigr\}^{m-1}
\Bigl\{
T_o \max_{[R,R+T_o]} [A(s) + B(s)\alpha_o^{\sigma -1}]
+ \frac 1{T_o}
\Bigr\} \alpha_o.
\end{equation}
\end{lemma}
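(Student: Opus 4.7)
The plan is to construct $\alpha$ by the sub- and super-solution method applied to the Dirichlet problem \eqref{radial annulus}, extract positivity and monotonicity from the sign structure of the equation, and then read off \eqref{1.3} from a single integration of the identity $(g^{m-1}\alpha')'=g^{m-1}(A\alpha+B\alpha^\sigma)$.

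\emph{Construction.} The constant $\beta\equiv\alpha_o$ is a super-solution since $A\ge 0$ and $B>0$ force $-A\alpha_o-B\alpha_o^\sigma\le 0$, and the identically zero function is a sub-solution; the boundary values are compatible with those prescribed in \eqref{radial annulus}. Rewriting the equation in divergence form $-(g^{m-1}\alpha')'+\lambda g^{m-1}\alpha=g^{m-1}[\lambda\alpha-A\alpha-B\alpha^\sigma]$ for $\lambda$ so large that the right-hand side is non-decreasing in $\alpha\in[0,\alpha_o]$, the standard monotone iteration scheme produces a $C^2$ solution $\alpha$ on $[R,R+T]$ with $0\le\alpha\le\alpha_o$ and the prescribed boundary values.

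\emph{Qualitative behavior.} Setting $c(r):=A(r)+B(r)\alpha(r)^{\sigma-1}\ge 0$, the equation reads $\alpha''+(m-1)\frac{g'}{g}\alpha'-c(r)\alpha=0$, so the strong maximum principle rules out an interior zero and $\alpha>0$ on $[R,R+T)$. Multiplying through by $g^{m-1}$ gives $(g^{m-1}\alpha')'=g^{m-1}(A\alpha+B\alpha^\sigma)\ge 0$, so $g^{m-1}\alpha'$ is non-decreasing on $[R,R+T]$. Local Lipschitzness of $A\alpha+B\alpha^\sigma$ in $\alpha\in[0,\alpha_o]$ excludes $\alpha'(R+T)=0$, since together with $\alpha(R+T)=0$ uniqueness for the IVP would force $\alpha\equiv 0$; with $\alpha\ge 0$ this gives $\alpha'(R+T)<0$, and monotonicity of $g^{m-1}\alpha'$ then propagates $\alpha'<0$ back to all of $[R,R+T)$.

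\emph{Gradient bound.} Integrating $(g^{m-1}\alpha')'$ from $R$ to $R+T_o$ yields
$$g(R)^{m-1}|\alpha'(R)|=g(R+T_o)^{m-1}|\alpha'(R+T_o)|+\int_R^{R+T_o}g(s)^{m-1}[A(s)\alpha(s)+B(s)\alpha(s)^\sigma]\,ds.$$
Using $\alpha\le\alpha_o$ and the monotonicity of $g$, the integral is bounded by $g(R+T_o)^{m-1}\,T_o\max_{[R,R+T_o]}[A+B\alpha_o^{\sigma-1}]\,\alpha_o$. For the boundary term, the mean value theorem supplies $r^*\in(R,R+T_o)$ with $|\alpha'(r^*)|=(\alpha_o-\alpha(R+T_o))/T_o\le\alpha_o/T_o$, and since $g^{m-1}|\alpha'|$ is non-increasing (as $g^{m-1}\alpha'$ is non-decreasing and negative) one gets $g(R+T_o)^{m-1}|\alpha'(R+T_o)|\le g(r^*)^{m-1}|\alpha'(r^*)|\le g(R+T_o)^{m-1}\alpha_o/T_o$. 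Summing and dividing by $g(R)^{m-1}$ produces exactly \eqref{1.3}. The main technical point is the strictness $\alpha'(R+T)<0$, which is what both powers the monotonicity of $\alpha$ on $[R,R+T)$ and permits the clean transfer of the mean-value estimate to the endpoint $R+T_o$; once this is in hand everything else reduces to routine sub/super-solution machinery and one integration.
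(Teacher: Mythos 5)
Your proof is correct, and the existence step is genuinely different from (and simpler than) the paper's. The paper constructs the solution by embedding the ODE as a radial problem on a model manifold $\mathbb{R}^m$ with metric $dr^2 + g(r)^2\,d\theta^2$, then builds a Schr\"odinger operator with a large negative potential in an outer shell to force $\lambda_1^{\bar L}<0$, and passes through two successive monotone iterations (first on $B_{\bar R}$ and then on the annulus) before extracting the radial profile $\alpha$. You instead work directly with the one-dimensional Dirichlet problem, observe that the constant $\alpha_o$ is a super-solution and $0$ a sub-solution, put the equation in the divergence form $-(g^{m-1}\alpha')'+\lambda g^{m-1}\alpha=g^{m-1}[\lambda\alpha-A\alpha-B\alpha^\sigma]$ with $\lambda$ large enough to make the right side monotone in $\alpha\in[0,\alpha_o]$, and run a single monotone iteration. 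This is shorter, avoids the detour through the spectrum of the auxiliary operator, and also delivers the bound $0\le\alpha\le\alpha_o$ for free. The qualitative step is also organized a bit differently: the paper shows $\alpha'<0$ on $[R,R+T)$ by assuming a zero of $\alpha'$ at some $r_o$ and integrating $(g^{m-1}\alpha')'\ge 0$ forward to force $\alpha'>0$ beyond $r_o$, contradicting $\alpha(R+T)=0$; you instead use uniqueness of the IVP (legitimate, since $A\alpha+B\alpha^\sigma$ is $C^1$ in $\alpha$ on $[0,\alpha_o]$ because $\sigma>1$) to get the strict inequality $\alpha'(R+T)<0$ and then propagate backwards via the monotonicity of $g^{m-1}\alpha'$. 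Your derivation of the gradient bound \eqref{1.3} is essentially the paper's: integrate $(g^{m-1}\alpha')'$ over $[R,R+T_o]$, control the boundary term by a mean-value point together with the monotonicity of $g^{m-1}\alpha'$, and bound the integral term by $\alpha\le\alpha_o$ and $g$ non-decreasing. All the steps check out.
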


\begin{proof}
We first show the existence of a solution $\alpha$ to (\ref{radial
annulus}). We extend $g$ to a smooth, non-decreasing function on
$[0, +\infty)$ satisfying $g'(0)=1$ and $g^{(2k)}(0)=0$ for every
$k\in \N$,  and consider the model manifold $M=\R^m$ with the
metric given in polar coordinates by
\begin{equation*}
\langle \,,\,\rangle = dr^2 + g(r)^2 d\theta ^2.
\end{equation*}
Note that by the conditions imposed on the function $g$, the
metric originally defined on $\R^m\setminus\{0\}$ extends to a
smooth metric on the whole of $\R^m.$

Let $R_1$ and $\bar R$ be such that $0<R_1<R< R+T<\bar R$, and
let $\psi$ be a smooth radial cut-off function such that $0\leq \psi\leq 1,$ $\psi\equiv 1$ on
$B_{R_1},$ and $\psi \equiv 0$ on $M\setminus \bar B_{R+T}$. Define
\begin{equation*}
\bar a (x) = -\psi(x) A(r(x)) + N(1-\psi(x))
\end{equation*}
where $N$ is  constant and let $\bar L$ be the Schr{\"o}dinger operator
$L=\Delta + \bar a(x)$. We claim that if $N$ is sufficiently large
then
\begin{equation*}
\lambda_1^{\bar L} (B_{\bar R})<0.
\end{equation*}
Indeed, let $u$ be any smooth function satisfying $u>0$ in $B_{\bar
R}$ and $u=0$ on $\bdr B_{\bar R}$. Then
\begin{equation*}
\int_{B_{\bar R}} |\nabla u |^2 - \bar a(x) u^2 = \int_{B_{\bar R}} [ |\nabla u |^2 +
\psi(x) A(r(x)) u^2 ] - N \int_{B_{\bar R}} (1-\psi) u^2,
\end{equation*}
and since $(1-\psi ) u^2 >0$ in $B_{\bar R} \setminus B_{R+T}$,
the right hand side may be made negative provided $N$ is large enough.
The claim now follows from the variational characterization of
$\lambda_1^{\bar L}(B_{\bar R}).$

Let $\phi$ be the radial, normalized eigenfunction belonging to
$\lambda_1^{\bar L}(B_{\bar R})$. By definition
\begin{equation*}
\int_{B_{\bar R }}  |\nabla \phi|^2 - \bar a(x) \phi^2 =
\lambda_1^{\bar L}(B_{\bar R})<0,
\end{equation*}
so that, if $\gamma>0$ is sufficiently small,
\begin{equation*}
\int_{B_{\bar R }}  |\nabla \phi|^2 - \bar a(x) \phi^2
+\gamma   B(r(x)) \phi^2 = \lambda_1^{\bar L}(B_{\bar R})
+\gamma \int_{B_{\bar R }}  B(r(x)) \phi^2
<0.
\end{equation*}
Thus, if  we denote by $\tilde L = L+\bar a(x) -\gamma B(r(x))$,
then
\begin{equation*}
\lambda_1^{\tilde L} (B_{\bar R})< 0.
\end{equation*}
If $\xi$  is a positive radial eigenfunction belonging to
$\lambda_1^{\tilde L} (B_{\bar R})$, so that
\begin{equation*}
\Delta \xi + \bar a(x) \xi = \gamma B(r(x)) \xi -
\lambda_1^{\tilde L} (B_{\bar R})\xi \geq \gamma B(r(x)) \xi
\,\,\text{ on } \, B_{\bar R},
\end{equation*}
then the function $v_-= \mu \xi$  satisfies
\begin{equation*}
\begin{cases}
\Delta v_- + \bar a(x) v_- - B(r(x)) v_-^\sigma
\geq B(r(x)) \mu\xi \bigl[\gamma - (\mu \xi)^{\sigma -1}\bigr]
\geq 0\,  \text{ in }\, B_{\bar R} &\\
v_->0 \,\text{ on }\,B_{\bar R} \quad
 v_- \equiv 0 \,\text{ on }\, \bdr B_{\bar R}&
\end{cases}
\end{equation*}
provided $0<\mu< \gamma^{1/(\sigma -1)}( \sup_{B_{\bar R}}\xi )^{-1}$.

On the other hand, since $B(r)>0,$ a sufficiently large positive constant
$v_+$ is a super solution of the above problem, that is
\begin{equation*}
\begin{cases}
\Delta v_+ \bar a(x) v_+ - B(r(x)) v_+^\sigma\leq 0
\,  \text{ in }\, B_{\bar R} &\\
v_+>0 \,\text{ on }\, \bdr B_{\bar R},&
\end{cases}
\end{equation*}
and by the monotone iteration scheme there exists a solution $w$
to the problem
\begin{equation*}
\begin{cases}
\Delta w + \bar a(x) w - B(r(x)) w^\sigma= 0, \, w>0
\,  \text{ in }\, B_{\bar R} &\\
w\equiv 0 \,\text{ on }\, \bdr B_{\bar R}.&
\end{cases}
\end{equation*}
Note that $w$ is radial, since the monotone iteration scheme
produces radial solutions in radial setting.
Moreover, since $\bar a(x) > - A(r(x))$ and $\inf_{\bar B_{R+T}} w>0$,
if $c$ is sufficiently large then  the function  $w_+ = cw$ satisfies
\begin{equation*}
\begin{cases}
\Delta w_+ -A(r(x)) w_+ - B(r(x)) w_+^\sigma\leq 0
\,  \text{ in }\, B_{\bar R} &\\
w_+ \geq  0 \,\text{ on }\, \bdr B_{R+T}\cup \bdr B_R,&
\end{cases}
\end{equation*}
and since $w_-\equiv 0$ is a sub-solution of the problem, applying
once again the monotone iteration scheme produces a radial non-negative
$C^2$ solution  $u$ of
\begin{equation*}
\begin{cases}
\Delta u+ -A(r(x)) u - B(r(x)) u^\sigma=  0
\,  \text{ in }\, B_{ R +T}\setminus \bar B_R &\\
u\geq 0, \, u=\alpha_o \,\text{ on }\, \bdr B_{R},\, u= 0\text{ on }
\bdr B_{R+T}.&
\end{cases}
\end{equation*}
Since $u$ satisfies
\begin{equation*}
\Delta u -\bigl( A(r(x)) + B(r(x)) u^{\sigma -1}\bigr) u=0,
\end{equation*}
by the strong maximum principle (see \cite{GT}, p. 35) we deduce
that $u>0$ in $B_{ R +T}\setminus \bar B_R$. Since $u$ is radial,
we deduce that $u(x)=\alpha(r(x))$ with $\alpha$ satisfying
(\ref{radial annulus}).

We now show that $\alpha'<0$ in $[R,R+T)$. Indeed, assume this is not
so. Since $\alpha(r)>0=\alpha(R+T)$,  then there exists $r_o\in[R,
R+T)$ such that $\alpha'(r_o)=0$
Integrating (\ref{radial annulus}) between $r_o$ and $r$ gives
\begin{equation*}
g(r) ^{m-1} \alpha'(r) = \int_{r_o} ^r g(s)^{m-1} \alpha(s)
[A(s) + B(s) \alpha(s)^{\sigma-1}] \, ds
\end{equation*}
and the integral on the right hand side is striclty positive for
every $r>r_o$. Thus $\alpha'(r)>0$ in $[r_o, R+T],$ contradicting
$\alpha(r_o)>0$, $\alpha(R+T)=0.$

It remains to show that estimate (\ref{1.3}) holds.
Integrating (\ref{radial annulus})
between $R+t_1$ and $R+t_2$ with  $0\leq t_1<t_2\leq T$, yields
\begin{multline}
\label{a' estimate 1}
g(R+t_2) ^{m-1} \alpha'(R+t_2) -g(R+t_1) ^{m-1} \alpha'(R+t_1)
\\
= \int_{R+t_1} ^{R+t_2} g(s)^{m-1} \alpha(s)
[A(s) + B(s) \alpha(s)^{\sigma-1}] \, ds,
\end{multline}
and since the integrand is positive, we deduce that
\begin{equation*}
g(R+t_1) ^{m-1} \alpha'(R+t_1) < g(R+t_2) ^{m-1}
\alpha'(R+t_2),
\end{equation*}
whence, recalling that  $\alpha'<0$ and $g$ is non-decreasing,
\begin{equation*}
\frac{\alpha'(R+t_1)}{\alpha'(R+t_2)} > \left(
\frac{g(R+t_2)}{g(R+t_2)}\right)^{m-1} \geq 1,
\end{equation*}
and therefore
\begin{equation*}
\alpha'(R+t_1)< \alpha'(R+t_2)\leq 0 \quad \forall 0\leq t_1<t_2\leq
T.
\end{equation*}
Moreover, for every $T_o\in (0,T]$ there exists $t\in (0, T_o)$ such that
\begin{equation*}
\frac{ \alpha_o}{T_o} = \frac{\alpha (R) - \alpha(R+T)} {T_o} =
-\alpha'(R+t)>-\alpha(R+T_o).
\end{equation*}
Writing (\ref{a' estimate 1}) with $t_1=0$ $t_2=T_o$, and using the above
inequality  we obtain
\begin{equation*}
-g(R)^{m-1} \alpha'(R)
\leq
g(R+T_o)^{m-1} \frac {\alpha_o}{T_o} + T_o g(R+T_o)^{m-1} \alpha_o
\max_{[R,R+T_o]}[A(s) + B(s) \alpha_o^{\sigma -1}],
\end{equation*}
whence, for every $t\in[R,R+T]$,
\begin{equation*}
|\alpha'(t)|\leq |\alpha'(R)|\leq \left( \frac
{g(R+T_o)}{g(R)}\right)^{m-1} \left\{ \frac 1{T_o} + T_o
\max_{[R,R+T_o]}[A(s) + B(s) \alpha_o^{\sigma -1}]\right\}\alpha_o,
\end{equation*}
as required.
\end{proof}

We are now ready to prove the main result of this section.

\begin{theorem}
\label{thm 2.1}
Let $(M,\langle \,,\,\rangle)$ be a complete manifold, and assume
that the differential inequality
\begin{equation}
\label{1.5}
\Delta r(x) \leq (m-1) \frac{g'(r(x))}{g(r(x))}
\end{equation}
holds pointwise in the complement of the cut locus $Cut_o$ of $o$,
for some function $g\in C^\ty([0,+\ty))$, with $g^{(2k)}(0)=0$ if
$k=0,1,2,\dots,$ $g'(0)=1,$  $g(t)>0$ and $g'(t)\geq 0$ if $t>0.$
Let $a(x),$ $b(x)\in C^{0,\alpha}_{loc}(M)$ ($0<\alpha\leq 1$), and assume that
$b(x)$ satisfies the conditions in Theorem~\ref{thm 2.0}.
Suppose that we can choose $R>0$ and $T_o>0$
in such a way that
\begin{equation}
\label{1.19}
R\, \inf_{B_R} a> (1+\sup_{B_R} r\Delta r)
\bigl(\frac 1{T_o}
+ T_o\max_{\overline{B}_{R+T_o}\setminus B_R} a_-
\bigr)
\biggl(
\frac{g(R+T_o)}{g(R)}
\biggr)^{m-1}.
\end{equation}
Then  the equation
\begin{equation}
\label{1.20}
\Delta u+ a(x) u - b(x) u^\sigma = 0, \quad \sigma>1,
\end{equation}
has a  maximal positive solution on $M$.
\end{theorem}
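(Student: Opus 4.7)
The plan is to build a continuous, nonnegative, nontrivial global sub-solution $u_-$ of (\ref{1.20}) and then invoke Theorem~\ref{thm 2.0} (whose hypotheses on $b$ coincide with those made here) to obtain the required maximal positive $C^2$ solution. The sub-solution will be produced by gluing an interior cap $v$ defined on $\bar B_R$ with a radial piece $\alpha(r(x))$ on the annulus $\bar B_{R+T_o}\setminus B_R$, and extending by zero outside $B_{R+T_o}$, with the outward normal derivatives arranged to match at $\partial B_R$ and to jump upward at $\partial B_{R+T_o}$.

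For the outer piece I apply Lemma~\ref{lemma 1.1new} with $T=T_o$, constant coefficients $A=\max_{\bar B_{R+T_o}\setminus B_R}a_-$ and $B=\max_{\bar B_{R+T_o}\setminus B_R}b$ (positive since $b$ is strictly positive off a compact set and $R$ is taken so that $b>0$ on the annulus), and a small parameter $\alpha_o>0$ to be fixed. This yields $\alpha\in C^2([R,R+T_o])$, positive and strictly decreasing on $[R,R+T_o)$, with $\alpha(R)=\alpha_o$, $\alpha(R+T_o)=0$, and
\begin{equation*}
|\alpha'(R)|\le \Lambda(\alpha_o)\,\alpha_o,\qquad \Lambda(\alpha_o):=\biggl(\frac{g(R+T_o)}{g(R)}\biggr)^{m-1}\biggl(\frac{1}{T_o}+T_o(A+B\alpha_o^{\sigma-1})\biggr).
\end{equation*}
The Laplacian comparison (\ref{1.5}) together with $\alpha'\le 0$ gives, off the cut locus, $\Delta\alpha(r(x))\ge \alpha''+(m-1)(g'/g)\alpha'=A\alpha+B\alpha^\sigma\ge -a(x)\alpha+b(x)\alpha^\sigma$, so $\alpha(r(x))$ is a sub-solution of (\ref{1.20}) on the annulus; the cut locus is handled as usual via the distributional form of the comparison.

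For the interior, set
\begin{equation*}
v(x)=\alpha_o\bigl[1+k(R^2-r(x)^2)\bigr],\qquad k:=\frac{|\alpha'(R)|}{2R\alpha_o}>0.
\end{equation*}
Then $v(R)=\alpha_o$ and the outward radial derivative of $v$ at $\partial B_R$ equals $-2kR\alpha_o=\alpha'(R)$, matching the outer piece exactly. Using $\Delta(r^2)=2(1+r\Delta r)$ off the cut locus and $\alpha_o\le v\le \alpha_o(1+kR^2)$, a direct computation yields
\begin{equation*}
\Delta v+a v-bv^\sigma\ge \alpha_o\Bigl[\inf_{B_R} a-2k\bigl(1+\sup_{B_R} r\Delta r\bigr)-C\alpha_o^{\sigma-1}\Bigr],
\end{equation*}
for a constant $C=C(\sup_{B_R} b,k,R,\sigma)$ that stays bounded as $\alpha_o\downarrow 0$. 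Because $2k\le\Lambda(\alpha_o)/R$ and $\Lambda(\alpha_o)\to\Lambda(0)$ as $\alpha_o\downarrow 0$, hypothesis (\ref{1.19}) rewrites as $\inf_{B_R} a>\Lambda(0)(1+\sup_{B_R} r\Delta r)/R$, forcing the bracket to be strictly positive in the limit. Choosing $\alpha_o$ small enough absorbs the $O(\alpha_o^{\sigma-1})$ correction and makes $v$ a sub-solution on $B_R$.

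The glued function $u_-$ is continuous, nontrivial, and belongs to $H^1_{loc}(M)$. The normal-derivative match at $\partial B_R$ produces no singular distributional mass, while at $\partial B_{R+T_o}$ the outward derivative jumps up from $\alpha'(R+T_o)<0$ to $0$, contributing a non-negative surface measure to $\Delta u_-$. A routine integration by parts against a nonnegative test function, combined with the piecewise sub-solution property, then shows that $u_-$ is a weak sub-solution of (\ref{1.20}) on $M$, so that Theorem~\ref{thm 2.0} applies and delivers the desired maximal positive $C^2$ solution. The main obstacle is the quantitative calibration of $v$: realizing the prescribed negative slope $\alpha'(R)$ at $\partial B_R$ while preserving the sub-solution property inside $B_R$. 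Condition (\ref{1.19}) is precisely the inequality that makes these two requirements simultaneously achievable in the small-$\alpha_o$ regime.
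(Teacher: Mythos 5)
Your argument is correct and is essentially the paper's proof: apply Lemma~\ref{lemma 1.1new} on the annulus, build a parabolic cap $\alpha_o(1+k(R^2-r^2))$ on $B_R$ calibrated so the radial slopes match at $\partial B_R$, use (\ref{1.19}) in the small-$\alpha_o$ regime to make the cap a sub-solution, glue with zero outside $B_{R+T_o}$, and invoke Theorem~\ref{thm 2.0}. Your choice $k=|\alpha'(R)|/(2R\alpha_o)$ forcing an exact slope match is a mild streamlining of the paper's $\beta'(R)\le\alpha'(R)$; the only points you gloss over are ensuring strict positivity of the coefficient $B$ fed to the lemma (the paper uses $B(r)+\epsilon$, since $b$ may vanish on the annulus) and the weak sub-solution verification across the cut locus (the paper uses an exhaustion by star-shaped domains), both standard and easily supplied.
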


\par
\noindent {\bf Remarks} Observe that, by the Laplacian Comparison
Theorem, see, e.g. \cite{GW}, or \cite{BRS2}, Appendix, the
validity of an inequality of the form (\ref{1.5}) can be deduced
from appropriate lower bounds for the radial Ricci curvature. To
compare with the existence theorems in Euclidean setting that can
be found in the literature, we note that, on $\R^m$, (\ref{1.5})
holds with $g(r)=r$, while if we assume an "almost Euclidean
behavior", namely, that the radial Ricci curvature satisfies an
estimate
\begin{equation*}
Ricc\geq - (m-1)B(1+r^2)^{-1},
\end{equation*}
then (\ref{1.5}) holds with $g(r)= r^{B'}$ where
$B'=[1+\sqrt{1+4B^2}]/2$. We note in passing that the assumption
on the Ricci curvature
does not imply that the manifold is quasi-isometric to Euclidean
space.
If we assume instead that
\begin{equation*}
Ricc\geq - (m-1)B
\end{equation*}
so that, loosely speaking, the reference model is  hyperbolic
space of constant negative sectional curvature, then(\ref{1.5})
holds with the choice $g(r)= \frac 1{\sqrt B} \sinh(\sqrt B r)$.
\par

We also note that condition (\ref{1.19}) is satisfied if $a(x)$ is
sufficiently large near the  origin $o$ and non-negative in a
suitably large ball. Further, up to choosing a different reference point,
it suffices to  assume that $a(x)$ has a positive spike somewhere. Of course
(\ref{1.5}) should then be written with respect to the new origin centered
at the spike.
Observe however that if we assume, e.g., that  the Ricci curvature
is bounded from below by a negative constant, then the validity of
(\ref{1.5}) is independent of the chosen origin.

\begin{proof} According to Theorem~\ref{thm 2.0} it suffices to
show that (\ref{1.20}) has a non-negative, non-identically zero
global sub-solution. This  will be obtained by joining suitable
radial local sub-solutions.
We set
$$\min_{\bdr B_r} a(x) = A(r) = A_+(r) - A_-(r),\quad
B(r) = \max_{\bdr B_r} b(x) ,\quad\text{and}\quad \tau = \max_{B_r}r\Delta r.
$$

Applying the previous lemma with $A_-(r)$ and $B(r)+\epsilon$
instead of $A(r)$ and $B(r),$ respectively, we deduce that
for every $\alpha_o$ and $\epsilon>0$ there exists a solution
$\alpha\in C^2([R, R+T_o])$
of differential inequality
\begin{equation}
\label{diff ineq alpha}
\alpha'' + (m-1) \frac{g'} g \alpha ' - A_- (r) \alpha - B(r)
\alpha^\sigma \geq 0,
\end{equation}
satisfying
$\alpha(R) = \alpha_o$,  $\alpha(R+T_o) = 0$, $\alpha'(r)< 0$
on $[R, R+T_o)$ and
\begin{equation}
\label{alpha' condition}
0>
\alpha' (R)> - \left( \frac
{g(R+T_o)}{g(R)}\right)^{m-1} \left\{ \frac 1{T_o} + T_o
\max_{[R,R+T_o]}[A(s) + (B(s)+\epsilon) \alpha_o^{\sigma
-1}]\right\}\alpha_o.
\end{equation}
Using the expression of the Laplacian of a radial function, and
the inequalities (\ref{1.5}),  $a(x) \geq - A_- (r(x))$ and $b(x)\leq B(r(x))$
and $\alpha'\leq 0$, it follows that the
function $v(x)=\alpha(r(x))$ is Lipschitz  in $B_{R+T_o}\setminus
B_R$ and $C^2$ in the complement of the cut locus of $o$ where it
satisfies the pointwise inequality
\begin{equation}
\label{subsol annulus}
\Delta v + a(x) v - b(x) v^\sigma =
\alpha'' +  \alpha'\Delta r + a(x) \alpha - b(x) \alpha^\sigma
\geq 0.
\end{equation}

To construct a sub-solution in the ball $B_R$ we   consider
the problem
\begin{equation}
\label{problem beta}
\begin{cases}
\beta'' + \ds{\frac{\tau}r} \beta' + A (r) \beta - B(r)
\beta^\sigma \geq 0 \quad  \text{on } [0,R]& \\
\beta>  0, \, \beta'\leq 0, \,\beta(R) = \alpha_o,\, \beta'(R)
\leq \alpha'(R),&
\end{cases}
\end{equation}
and we look for a solution  of the form
\begin{equation}
\label{1.25}
\beta(r) = \alpha_o \bigl( 1+ (R^2 - r^2)\eta \bigr)\geq \alpha_o.
\end{equation}
Note that $\beta'(r) = - 2\eta R\alpha_o<  0$, so, using (\ref{alpha'
condition}),  the condition $\beta' (R)\leq \alpha'(R)$  follows
from
\begin{equation}
\label{derivative estimate}
2\eta \geq \frac 1 R
\left( \frac {g(R+T_o)}{g(R)}\right)^{m-1} \left\{ \frac 1{T_o} + T_o
\max_{[R,R+T_o]}[A(s) + (B(s)+\epsilon) \alpha_o^{\sigma
-1}]\right\}.
\end{equation}
On the other hand, a direct computation that uses $A(r) \geq \min_{B_R}
a(x)$ and  $B(r)\leq \max_{B_R} b(x)$
shows that
\begin{equation*}
\beta'' + \frac{\tau}r \beta' + A (r) \beta - B(r)
\beta^\sigma \geq  \alpha_o \bigl\{\min_{B_R} a (x)
- 2\eta (1+\tau) - \alpha_o^{\sigma -1}
\eta^\sigma R^\sigma \max_{B_r} b(x)\bigr\}
\end{equation*}
and the right hand side is non-negative provided
\begin{equation}
\label{subsol condition}
2\eta \leq \frac  1 {1+\tau} \bigl\{ \min_{B_R} a -
\alpha_o^{\sigma -1} \eta^\sigma  R^\sigma \max_{B_r} b(x)
\bigr\}.
\end{equation}
Using (\ref{1.19}) we may choose $\alpha_o$ small enough that, for
every $\eta\leq  (\min_{B_R} a )/ [2(1+\tau)] $ (which is the largest
possible value of $\eta$ allowed by (\ref{subsol condition}))
the right hand side of (\ref{subsol condition}) is greater than or
equal to the right hand side of (\ref{derivative estimate}),
and therefore choose $\eta$ in such a way that both (\ref{derivative estimate}) and
(\ref{subsol condition}) are satisfied. For such values of $\alpha_o$ and
$\eta$,
the function $\beta$ satisfies all the requirements.
Proceeding as above one verifies that the function
$w(x) = \beta(\alpha(r(x))$ is Lipschitz in $B_R$ and $C^2$ in the complement
of the cut locus where it satisfies the pointwise inequality
\begin{equation}
\label{subsol ball}
\Delta w + a(x) w - b(x) w^\sigma
 \geq 0.
\end{equation}

Now we define
\begin{equation*}
u_-(x) =
\begin{cases}
w &\text{on }\, B_R\\
v &\text{on }\, B_{R+T_o} \setminus B_R \\
0 &\text{on }\, M\setminus B_{R+T_o},
\end{cases}
\end{equation*}
and claim that $u_-\in C^0(M) \cap H^{1}_{loc}(M)$ is a weak global
sub-solution of (\ref{1.20}).

This is easily seen if we assume that $o$ is a pole of $M$, for
then,  given a positive test function $\vp\in C^\ty_c(M)$,  applying
Green's second identity, and using the fact  that
$w$ and $v$ are  pointwise sub-solutions of (\ref{1.20}) in $B_R$ and
$B_{R+T_o}\setminus \overline B_R $ respectively,  we obtain
\begin{multline*}
\int_M u_- \Delta \vp = \int_M \langle \nabla u_- , \nabla \vp\rangle
= \int_{B_R} \vp \Delta w + \int_{B_{R+T_o}\setminus B_R} \vp \Delta v
\\
+
\int _{\bdr B_R} (w-v) \langle \nabla \vp ,
\nabla r\rangle - \vp \langle \nabla (w-v) , \nabla r\rangle
+\int
_{\bdr B_{R+T_o}} v \langle \nabla \vp ,
\nabla r\rangle - \vp \langle \nabla v , \nabla r\rangle
\\
\geq \int_M \bigl( -a(x)u_- + b(x) u_-^\sigma \bigr) \vp -
\bigl(\beta'(R) - \alpha'(R)\bigr) \int _{\bdr B_R} \vp
-\alpha'(R+T_o) \int_{\bdr B_{R+T_o}} \vp,
\end{multline*}
and the claim follows from the inequalities $\beta'(R)\leq
\alpha'(R)$, $\alpha'(R+T_o)\leq 0.$

In the case where the cut locus
of $o$ is not empty, one can adapt an argument in \cite{PRS2}, Lemma 2.2,
as follows:
we consider an exhaustion $\O_n$ of $M\setminus Cut_o$ by domains with smooth boundary,
which are star shaped with respect to $o$, so that, denoting by $\nu$ the
outward unit normal, we have $\langle \nabla r , \nu\rangle > 0$ on
$\bdr \Omega_n$. Since the part of $\bdr B_R $ contained in $\Omega_n$
is smooth, we may also deform $\Omega_n$, if necessary,  in such a way
that , for every $n$, $\bdr  \Omega_n$ is transversal to $\bdr B_R$ and $\bdr B_{R+T_o}$.

Since $\nabla u_-$ is locally bounded we have
\begin{equation*}
\int_{M} u_-\Delta \vp = -\int_M \langle\nabla u_-, \nabla
\vp\rangle =- \lim_{n\to \ty} \int_{\Omega_n}\langle \nabla u_- , \nabla \vp
\rangle.
\end{equation*}
We write $\Omega_n = (\Omega_n \cap B_R) \cup (\Omega_n\cap (B_{R+T_o}\setminus B_R))
\cup(\Omega_n \cap  B_{R+T_o}^c),$
apply the divergence theorem,  use the fact that $v$ and $w$ are
pointwise sub-solutions of (\ref{1.20}) in the complement of the cut
locus, to obtain
\begin{multline*}
-\int_{\Omega_n}
\langle \nabla u_- , \nabla \vp
\rangle
\geq
\int_{\Omega_n} \vp
\bigl[ -a(x) u_- + b(x) u_-^\sigma\bigr]
\\
-\Bigl( \int_{\bdr (\Omega_n \cap B_R)} \vp \langle \nabla w, \nu\rangle
+ \int_{\bdr (\Omega_n \cap (B_{R+T_o}\setminus B_R)} \vp \langle \nabla v,
\nu\rangle\Bigr).
\end{multline*}
To conclude, note that, by the transversality assumption,
up to sets of lower dimension, we have $\bdr \bigl( \Omega_n \cap B_R\bigr)
= (\Omega_n\cap \bdr B_R )\cup (\bdr \Omega_n \cap B_R) $
and similarly when $B_R $ is replaced by $B_{R+T_o}\setminus B_R,$ so that the
boundary integrals become
\begin{multline*}
(\beta'(R) - \alpha'(R)) \int_{\Omega_n\cap \bdr B_R} \vp +
\alpha'(R) \int_{\Omega_n\cap \bdr B_{R+T_o}} \vp \\
+\int_{\bdr \Omega_n \cap B_R}  \vp  \beta'\langle \nabla r, \nu\rangle
+\int_{\bdr \Omega_n \cap (B_{R+T_o}\setminus B_R)}  \vp \alpha' \langle \nabla r, \nu\rangle
\end{multline*}
and the first integral is non-positive because $\beta'(R)\leq \alpha'(R)$,
while the last two are non-positive because $\alpha',$ $\beta' \leq 0$ and
$\langle\nabla r, \nu\rangle\geq 0 $ on $\bdr \Omega_n.$
\end{proof}

\begin{remark}
\label{Yamabe}{\rm
The above existence result is also relevant  to the Yamabe problem,
that is, the possibility of conformally deforming the assigned
metric, with scalar curvature $s(x)$, to a new one with prescribed scalar
curvature $K(x)$. Indeed assume that $m\geq 3,$ and denote by  $u^{2/(m-2)}$
 the conformal factor, so that the deformed metric is given by
$\widetilde{\langle \,,\,\rangle} =  u^{\frac 2{(m-2)}}
\langle \,,\,\rangle$.
Then the scalar curvature of $\widetilde{\langle \,,\,\rangle}$ is $K(x)$
provided $u$ is a,  necessarily positive, solution of the Yamabe
equation
\begin{equation*}
c_m \Delta u -s(x) u + K(x) u^{\frac{m+2}{m-2}} =0
\end{equation*}
where $c_m= \frac{4(m-1)}{m-2},$ which  is (\ref{1.20}) with $a(x) =
- c_m^{-1}s(x)$ and $b(x) = - c_m^{-1} K(x)$.

By way of example, assume that $\mathrm{Ricc} \geq -(m-1) B$, so
that $s(x) \geq - m(m-1)B$, and, as noted above, (\ref{1.5}) holds
with $g(r)= (\sqrt B)^{-1} \sinh (\sqrt B r)$. If we suppose that
$s(x)\leq 0$ in a ball of radius $R+1$, then (\ref{1.19}) holds,
with $T_o=1$, provided
\begin{equation*}
R  \sup_{B_R} s(x) \leq - c_m \left(
 1 + (m-1)R \sqrt B \coth (\sqrt B R)
 \right)
 \left(\frac{\sinh (\sqrt B (R+1))}{\sinh (\sqrt B
 R)}\right)^{m-1}.
\end{equation*}
It is easy to see that if $\sqrt B > 4 (m-1)/[m(m-2)]$ then
the above  condition is verified provided
$s(x)$ is sufficiently near $-m(m-1)B$ in the ball $B_R$ and $R$
is large enough. We note that there are situations (for instance  when
the sectional curvature is  suitably pinched) where
Theorem~\ref{thm 2.1} is applicable, while Theorem~\ref{thm BRS} is not.
}
\end{remark}

\section{Non existence results}
The purpose of this section is to prove a non-existence result for
non-negative $C^2$ solutions of the differential inequality
\begin{equation}
\label{3.1}
\Delta u + a(x) u - b(x) u^\sigma \geq 0 \qquad \text{on }\, M.
\end{equation}

We begin with the following general

\begin{theorem}
\label{thm 3.2} Let $a(x)$, $b(x)\in C^0(M)$ and assume that $b(x)\geq 0.$
Let $H>0$, $K>-1$ and $A\in \R$ be constants satisfying
\begin{equation}
\label{3.3}
\max \{ 0 , A \} \leq H(K+1) -1,
\end{equation}
and suppose that there exists a positive $C^2$ solution of the
differential inequality
\begin{equation}
\label{3.4}
\Delta \vp + H a(x) \vp \leq -K \frac{|\nabla \vp|^2}{\vp} \qquad
\text{on }\,\, M.
\end{equation}
Then the differential inequality
\begin{equation}
\label{3.5}
u\Delta u + a(x) u^2 - b(x) u^{\sigma +1} \geq -A |\nabla u|^2,
\qquad\sigma \geq 1,
\end{equation}
has no non-negative $C^2$ solutions on $M$ satisfying
\begin{equation}
\label{3.6}
\mathrm{supp }\,u \cap \{x\in M\,:\, b(x)>0\}\neq \emptyset
\end{equation}
and
\begin{equation}
\label{3.7}
\Bigl(\int_{\bdr B_r} \vp^{\frac{\beta+1} H(2-p)} u^{2(\beta +1)}\Bigr)^{-1}\not\in L^1(+\ty),
\end{equation}
for some $p>1$ and $\beta$ satisfying $\max\{0,A\} \leq\beta \leq H(K+1)-1.$
\end{theorem}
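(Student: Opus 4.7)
The plan is to argue by contradiction. Assume that a non-negative $C^2$ solution $u\not\equiv 0$ of (\ref{3.5}) exists satisfying (\ref{3.6}) and (\ref{3.7}). The strategy is to combine (\ref{3.4}) and (\ref{3.5}) via a Picone-type manipulation to derive a weighted divergence inequality, then to test it against a radial cutoff chosen in accordance with (\ref{3.7}) to reach a contradiction.

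First I would set $v = u^{\b+1}$ and $\rho = (\b+1)/H$. On the open set where $u>0$, multiplying (\ref{3.5}) by $u^{\b-1}$ and rewriting in terms of $v$ gives
$$\Delta v + (\b+1) a(x) v \geq (\b+1) b(x) u^{\b+\s} + \frac{\b-A}{\b+1}\,\frac{|\nabla v|^2}{v},$$
the last term being non-negative by $\b\geq\max\{0,A\}$. Using (\ref{3.4}) to estimate $(\b+1) a(x) v \leq -\rho v\Delta \vp/\vp - \rho K v|\nabla \vp|^2/\vp^2$, multiplying through by $\vp^\rho$, and applying the identity
$$\vp^\rho \Delta v - \rho \vp^{\rho-1}v\Delta\vp = \diver\bigl(\vp^{2\rho}\nabla(v\vp^{-\rho})\bigr) + \rho(\rho-1) v\vp^{\rho-2}|\nabla \vp|^2,$$
one arrives, upon setting $w = v\vp^{-\rho}$, at the weighted divergence inequality
$$\diver\bigl(\vp^{2\rho}\nabla w\bigr) \geq (\b+1) b(x) u^{\b+\s}\vp^\rho + \rho(K+1-\rho) v\vp^{\rho-2}|\nabla \vp|^2 + \frac{\b-A}{\b+1}\,\frac{|\nabla v|^2}{v}\vp^\rho,$$
whose right-hand side is non-negative term by term: the constraint $\b\leq H(K+1)-1$ is exactly the condition $\rho\leq K+1$ controlling the middle contribution.

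Next I would multiply by $w\psi^p$ for a cutoff $\psi\in C_c^\infty(M)$, integrate over $M$, integrate by parts, and use Young's inequality to absorb the $\vp^{2\rho}|\nabla w|^2$ term arising from the cross contribution. Since $w\vp^\rho = u^{\b+1}$, this should produce a Caccioppoli-type estimate of the form
$$\int_M \psi^p\, b(x)\, u^{2\b+\s+1} \leq C\int_M |\nabla\psi|^p\, u^{2(\b+1)}\,\vp^{\frac{\b+1}{H}(2-p)}.$$
Choosing $\psi(x) = \eta(r(x))$ radial with $\eta(0)=1$ and compactly supported, the co-area formula recasts the right-hand side as $C\int_0^\infty|\eta'(t)|^p G(t)\, dt$, where $G(t) = \int_{\bdr B_t}\vp^{(2-p)(\b+1)/H} u^{2(\b+1)}$. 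The hypothesis $G^{-1}\notin L^1(+\ty)$ in (\ref{3.7}), via H\"older's inequality, permits choices of $\eta$ driving this integral to zero, forcing $b(x) u^{2\b+\s+1}\equiv 0$ on $M$. By continuity this means $u\equiv 0$ on the open set $\{b>0\}$, in contradiction with (\ref{3.6}).

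The main obstacle I anticipate is the Caccioppoli step: matching the specific weights of (\ref{3.7}) for general $p>1$ requires a careful choice of Young exponents. The natural $(2,2)$ split handles $p=2$ immediately (where the factor $\vp^{(2-p)(\b+1)/H}$ collapses to $1$); for $p\neq 2$ one presumably must exploit the two extra gradient terms on the right of the divergence inequality, with the admissible range $\b\in[\max\{0,A\},H(K+1)-1]$ providing the slack needed to balance exponents of $\vp$ and of $|\nabla\psi|$ simultaneously.
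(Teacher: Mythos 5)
Your substitution $w=u^{\beta+1}\varphi^{-(\beta+1)/H}$, the weighted divergence inequality you derive, and the role of the constraint $\beta\leq H(K+1)-1$ all coincide with the paper's set-up (there $\alpha=(\beta+1)/H$ and $v_0=\varphi^{-\alpha}u^{\beta+1}$). However, working pointwise on $\{u>0\}$ and then integrating by parts over $M$ against a cutoff is not justified as written: when $0\leq\beta<1$ (which the hypotheses allow) the function $u^{\beta+1}$ is only $C^1$ across $\partial\{u>0\}$, so the pointwise divergence inequality has no meaning there. The paper regularizes by replacing $u^{\beta+1}$ with $(u^2+\epsilon)^{(\beta+1)/2}$, which is smooth and strictly positive for $\epsilon>0$, carries out the entire integral estimate at fixed $\epsilon$, and only passes $\epsilon\to 0$ at the end via dominated convergence.

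The more serious gap is the endgame. With a radial cutoff $\eta$ going from $1$ on $B_R$ to $0$ outside $B_S$, one has $\inf_\eta\int_R^S|\eta'|^p G\,dt=\bigl(\int_R^S G^{-1/(p-1)}\bigr)^{-(p-1)}$, so driving your Caccioppoli right-hand side to zero requires $G^{-1/(p-1)}\notin L^1(+\infty)$; but (\ref{3.7}) asserts $G^{-1}\notin L^1(+\infty)$, and when $p\neq 2$ neither condition implies the other. The paper uses a different mechanism: it takes the vector field $Z=v\,r(v)\varphi^{2\alpha}\nabla v$ with $r_n(t)=(t^2+1/n)^{(p-2)/2}$ and $s_n=\min\{p-1,1\}r_n$, applies the divergence theorem and the co-area formula to get the ODE-type inequality $h(t)^2\leq G(t)\,h'(t)$ for $h(t)=\int_{B_t}\varphi^{2\alpha}s(v_0)|\nabla v_0|^2$, and integrates it to obtain $h(R)^{-1}\geq\int_R^r G^{-1}\,dt$; letting $n\to\infty$ and $r\to\infty$, hypothesis (\ref{3.7}) with exponent exactly $-1$ forces $h\equiv 0$, hence $v_0$ is constant, hence $u^H=C\varphi$ with $C>0$ by (\ref{3.6}). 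The contradiction then comes not from an integral of $b$ but from substituting $u^H=C\varphi$ back into (\ref{3.4}) and (\ref{3.5}), giving $[A-H(K+1)+1]|\nabla u|^2\geq b(x)u^{\sigma+1}$, which by (\ref{3.3}) and (\ref{3.6}) is impossible. Your cleaner-looking route (keep the $b$-term and drive the right side to zero) is the one step that does not come for free under (\ref{3.7}), and it is precisely what the paper's differential-inequality-for-$h$ argument is engineered to replace.
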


\medskip
\noindent
{\bf Remarks.}
We note that if $p=2$, then the non-integrability assumption
(\ref{3.7}) involves $u$ alone and reduces to
\begin{equation}
\label{3.7bis}
\Bigl(\int_{\bdr B_r}
u^{2(\beta +1)}\Bigr)^{-1}\not\in L^1(+\ty).
\end{equation}
If $p\ne 2$, assumption (\ref{3.7bis}) implies (\ref{3.7}) if a suitable bound
on $\vp$ is available, e.g., if $p<2$ and $\vp$ is bounded from above.
A similar simplification occurs if $\vp$ is bounded above, respectively below, by
a  radial function, see, e.g., Theorem~\ref{thm 3.2'} below.

We also remark   that the inequality
\begin{equation*}
r-R = \int_R^r  f^{1/2} f^{-1/2} \leq \bigl(\int_R^r f\bigr )^{1/2}
\bigl(\int_R^r f^{-1}\bigr )^{1/2}
\end{equation*}
valid for $f>0$, together with integration in polar coordinates,  shows that
condition (\ref{3.7}) is implied by $ \vp^{\frac{2-p} H} u\in L^{2(\beta +1)}(M)$.

Finally, we note that the proof of the theorem could be achieved by
adapting the argument in the proof of Theorem~1.3 in \cite{PRS3}.
However, the present assumptions allow us to give an alternative
argument that we describe here for the sake of completeness.
\par

\begin{proof}
Let $u\geq 0$ be a solution of (\ref{3.5}) on $M$ satisfying
(\ref{3.6}) and (\ref{3.7}). Fix $\epsilon >0$, $\alpha\in \R$
and set
\begin{equation*}
v = \vp^{-\alpha} (u^2 +\epsilon)^{(\beta+1)/2}.
\end{equation*}
A straightforward computation that uses (\ref{3.4}) and
(\ref{3.5}) yields
\begin{equation*}
\begin{split}
v\div{\vp^{2\alpha} \nabla v}
& \geq
\alpha (K-\alpha +1) (u^2+\epsilon)^{\beta+1}
\frac{|\nabla \vp|^2}{\vp^2}
+ (\beta + 1) (u^2 + \epsilon)^\beta b(x) u^{\sigma +1}
\\
& + a(x) (u^2 +\epsilon)^{\beta+1}
\bigl[\alpha H - (\beta+1) \frac{u^2}{u^2+\epsilon}\bigr]\\
& + (\beta +1 ) (u^2 +\epsilon)^{\beta}
\bigl[
1-A + (\beta - 1) \frac{u^2}{u^2+\epsilon}
\bigr] |\nabla u|^2.
\end{split}
\end{equation*}
We choose $\alpha= H^{-1} (\beta+1)$, so that our assumptions on
$\beta$, $H$ and $K$ yield $0<\alpha\leq K+1$. Therefore
$\alpha(K-\alpha +1)\geq 0,$ and using the assumptions $b(x)\geq
0$ and $\beta+1\geq 0,$ we deduce that
\begin{equation}
\label{3.8}
\begin{split}
v\div{\vp^{2\alpha} \nabla v}
& \geq \epsilon (\beta+1) a(x)
(u^2+\epsilon)^\beta\\
&+ (\beta+1) (u^2+\epsilon)^\beta
\bigl[
1-A + (\beta - 1) \frac{u^2}{u^2+\epsilon}
\bigr] |\nabla u|^2.
\end{split}
\end{equation}
Let $r(t)\in C^1(\R)$ and $s(t)\in C^0(\R)$ satisfy the conditions
\begin{equation}
\label{3.9}
\rmi\,\, r(v)\geq 0, \qquad r(v)+vr'(v)\geq s(v)>0, \quad \text{on
}\, [0,+\ty),
\end{equation}
and let $Z$ be the vector field defined by
$Z= v r(v)\vp^{2\alpha} \nabla v$. For fixed
$t$ and $\delta>0$  let also  $\psi_\delta$ be the
Lipschitz function defined by
\begin{equation*}
\psi_\delta(x) =
\begin{cases}
1 & \text{if }\, r(x)\leq t\\
\ds{\frac{t+\delta-r(x)}\delta} &\text{if } \, t<r(x)< t+\delta \\
0 &\text{if } \, r(x)\geq t+\delta.
\end{cases}
\end{equation*}
Using (\ref{3.8}) (\ref{3.9}) and the definition of $\psi_\delta$
we compute
\begin{equation*}
\begin{split}
\div{\psi_\delta Z}  &= \psi_\delta \diver Z + \langle \nabla
\psi_\delta, Z\rangle\\
&\geq
(\beta+1)r(v) (u^2+\epsilon)^\beta \bigl(\epsilon a(x) +
\bigl[1-A + (\beta-1) \frac{u^2}{u^2+\epsilon}\bigr] |\nabla u|^2
\bigr)\chi_{B_{t}}  \\
& +  s(v)\vp^{2\alpha} |\nabla v|^2\chi_{B_{t}}
+ \frac 1\delta \langle \nabla r , Z\rangle \chi_{\overline B_{t+\delta}
\setminus B(t)},
\end{split}
\end{equation*}
whence, integrating, and using the divergence theorem and the
Cauchy-Schwarz inequality we obtain
\begin{multline*}
\int_{B_t}(\beta+1)r(v) (u^2+\epsilon)^\beta \bigl(\epsilon a(x) +
\bigl[1-A + (\beta-1) \frac{u^2}{u^2+\epsilon}\bigr] |\nabla u|^2
\bigr)\\
+ \int_{B_t}  s(v)\vp^{2\alpha} |\nabla v|^2
\leq
\frac 1\delta \int_{\overline B_{t+\delta} \setminus B(t)}
| Z|.
\end{multline*}
By H\"older inequality the integral on the right
hand side is bounded above by
\begin{equation*}
\Bigl(\frac 1\delta \int_{\overline B_{t+\delta} \setminus B(t)}
\vp^{2\alpha} \frac{r(v)^2}{s(v)} v^2\Bigr)^{1/2}
\Bigl(\frac 1\delta \int_{\overline B_{t+\delta} \setminus B(t)}
\vp^{2\alpha} s(v) |\nabla v| ^2\Bigr)^{1/2}.
\end{equation*}
Inserting into the above inequality,  letting $\delta\to 0+$ and using the
co-area formula (see Theorem 3.2.12  in \cite{F}) we deduce that
\begin{multline}
\label{3.12}
\int_{B_t}(\beta+1)r(v) (u^2+\epsilon)^\beta \bigl(\epsilon a(x) +
\bigl[1-A + (\beta-1) \frac{u^2}{u^2+\epsilon}\bigr] |\nabla u|^2
\bigr)  +  s(v)\vp^{2\alpha} |\nabla v|^2
\\
\leq
\Bigl( \int_{\bdr B_{t}} \vp^{2\alpha}
\frac{r(v)^2}{s(v)} v^2\Bigr)^{1/2}
\Bigl( \int_{\bdr B_{t} } \vp^{2\alpha}
s(v) |\nabla v| ^2\Bigr)^{1/2}.
\end{multline}
In the above formula, the surface integral is computed with
respect to $(m-1)$-dimensional Hausdorff measure on $\bdr B_t$,
which coincides with the Riemannian measure induced on the regular part
of $\bdr B_t$ (the intersection of $\bdr B_t$ with the complement of the
cut locus of $o$, see \cite{F}, 3.2.46,
or \cite{Ch}, Proposition 3.4). As $\e \to 0,$ $v=v_\e\to
v_0= \vp^{-\alpha} u^{\beta+1}$, whence, using the dominated convergence
theorem in (\ref{3.12}), we get
\begin{multline}
\label{3.12'}
\int_{B_t}  s(v_0)\vp^{2\alpha} |\nabla v_0|^2 +
(\beta+1)(\beta -A ) \int_{B_t}r(v_0) u^{2\beta}  |\nabla u|^2
\bigr)
\\
\leq
\Bigl( \int_{\bdr B_{t}} \vp^{2\alpha}
\frac{r(v_0)^2}{s(v_0)} v_0^2\Bigr)^{1/2}
\Bigl( \int_{\bdr B_{t} } \vp^{2\alpha}
s(v_0) |\nabla v_0| ^2\Bigr)^{1/2}.
\end{multline}
Defining
\begin{equation*}
h(t) =\int_{ B_{t} } \vp^{2\alpha}
s(v_0) |\nabla v_0| ^2,
\end{equation*}
so that by the co-area formula $H$ is Lipschitz and
\begin{equation*}
h'(t) =\int_{\bdr B_{t} } \vp^{2\alpha}
s(v_0) |\nabla v_0| ^2,
\end{equation*}
and noting that the coefficient of the second integral on the
left hand side of (\ref{3.12'}) is non-negative
by the conditions imposed on $\beta,$ we obtain
\begin{equation}
\label{3.13}
h(t)\leq
\Bigl( \int_{\bdr B_{t}} \vp^{2\alpha}
\frac{r(v_0)^2}{s(v_0)} v_0^2\Bigr)^{1/2}
\bigl( h'(t) \bigr)^{1/2}
\end{equation}
Our aim is to show that under assumption (\ref{3.7}) $v_0$ is
constant. The proof follows the lines of that of Lemma~1.1 in
\cite{RS}. Assume by contradiction that $v_0$ is not constant.
Then there exists $R_o$ such that $h(t)>0$ for every $t\geq R_o$,
and therefore the right hand side of (\ref{3.13}) is positive for
$t\geq R_o$. Dividing through by $h(t)$, squaring and integrating
the resulting differential inequality between $R$ and $r$ with
$R_o\leq  R< r$ yield
\begin{equation}
\label{3.14}
h(R)^{-1}\geq h(R)^{-1} - h(r)^{-1} \geq \int_R^r
\Bigl(\int_{\bdr B_t} \vp^{2\alpha}
\frac{r(v_0)^2}{s(v_0)} v_0^2 \Bigr)^{-1} dt.
\end{equation}
We choose a sequence of functions
\begin{equation*}
r_n(t) = (t^{2}+\frac{1}{n})^{\frac{p-2}{2}
}, \quad  s_{n}(t)= \min\{p-1, 1\}\, r_n(t)\,\,
\text{, }\forall n\in\mathbb{N}, \,\, p>1.%
\end{equation*}
Since  condition (\ref{3.9}) holds for every $n,$ so does (\ref{3.14}),
whence, letting $n\to +\ty$ and  using the Lebesgue and monotone convergence
theorems we deduce that there exists $C>0$ which depends only on $p$ such
that
\begin{equation}
\label{3.16}
\Bigl( \int_{B_R} v_0^{p-2} \vp^{2\alpha} |\nabla v_0|^2\Bigr)^{-1}
\geq C \int_R^r
\Bigl(\int_{\bdr B_t} \vp^{2\alpha} v_0^p \Bigr)^{-1} dt.
\end{equation}
Now, recalling that $\alpha= (\beta+1)/H$ and the definition of $v_0$,
we have  $\vp^{2\alpha}v_0^p = \vp^{(2-p)(\beta+1)/H}
u^{2(\beta+1)}$
and
the required contradiction is reached by letting $r\to +\ty$
and using assumption (\ref{3.7}).

Thus $v_0$ is constant, and we deduce that there exists a constant
$C\geq 0$ such that
\begin{equation*}
u^H=C \vp.
\end{equation*}
Since $u$ is not identically zero by (\ref{3.6}), $C>0$ and $u$ is
strictly positive on $M.$ We insert the expression of $\vp$ in
terms of $u$ in (\ref{3.4}), divide by $CH u^{H-2}$ and subtract the
result from (\ref{3.5}) to obtain
\begin{equation*}
[A-H(K+1) + 1] |\nabla u|^2 \geq b(x) u^{\sigma +1}.
\end{equation*}
Since the coefficient of $|\nabla u|^2$ is  non-positive, by
(\ref{3.3}), we conclude that
\begin{equation*}
b(x) u^{\sigma +1} \leq 0,
\end{equation*}
which contradicts (\ref{3.6}).
\end{proof}

\begin{remark}
\label{rmk 3.16-1}
{\rm
Observe that the above proof actually shows that if $A< H(K+1)-1$
then $\nabla u= 0$, so that   $u$, and therefore $\vp$, are
necessarily constant.
It follows from (\ref{3.4}) and (\ref{3.5}) that
$0\geq a(x)u \geq b(x) u^{\sigma +1} \geq 0$, so that,
if $a$ does not vanish identically, then $u\equiv 0$, without any
assumption on $b$.  On the other hand, if  $A=H(K+1)-1,$
then the conclusion depends on the fact that $b$ is
positive somewhere.
}
\end{remark}

\begin{remark}
\label{rmk 3.16-0}
{\rm
We also note that if $u$ is assumed to be strictly positive, then the
conclusion of the Theorem holds, with a much easier proof, if we assume that
$\max\{-1, A\}  \leq H(K+1)-1,$ and that $\beta>-1$, $\beta\geq
A,$ $\beta\leq H(K+1)-1.$
}
\end{remark}

\begin{remark}
\label{rmk 3.16a}
{\rm
Let $L_H$ be the Schr\"odinger operator defined by $L_H= \Delta +H
a(x).$ Then the validity of (\ref{3.4}) is related to the sign of
$\lambda_1^{L_H}(M)$. Indeed, assume that $\vp$ is a positive
$C^2$ solution of (\ref{3.4}). Let $\psi\in C_c^{\ty}(M)$ and
apply the divergence theorem to the vector field
$\psi^2 \nabla \log \vp$.
Since, by (\ref{3.4}) and Young inequality
\begin{equation*}
\begin{split}
\div{\psi^2 \nabla \log \vp} &= \frac{\psi^2}\vp \bigl(\nabla \vp
- \frac{|\nabla \vp|^2}\vp\bigr) +2\frac \psi\vp \langle \nabla
\vp,\nabla \psi\rangle \\
&\leq
\frac{\psi^2}\vp
\biggl(
-H a(x) \vp -(K+1)\frac{|\nabla \vp|^2}\vp
\biggr) +
\frac{\psi^2}{\vp^2} |\nabla \vp|^2 + |\nabla \psi|^2,
\end{split}
\end{equation*}
we deduce that
\begin{equation*}
\int_M |\nabla \psi|^2 - H a(x) \psi^2 \geq K \int_M \psi^2
\frac{|\nabla \vp|^2}{\vp^2},
\end{equation*}
and from the variational characterization of the bottom of the spectrum we
we conclude that  if $K\geq 0$ then $\lambda_1^{L_H}(M)\geq 0.$

On the other hand, if $\lambda_1^{L_H}(M)\geq 0$, then, by  an
extension of the result of  Moss Pieperbrink, and Fisher-Colbrie
Schoen quoted in Section~1 (see  \cite{PRS3}, Lemma~1.2), there exists
a positive $C^1$ function $v$ which satisfies
\begin{equation*}
\Delta v + H a(x) v =0
\end{equation*}
weakly on $M.$ Further, if $a(x)$ is assumed to be $C^{0,\alpha}$ for some
$\alpha\in(0,1),$  then $v$ is $C^2$ and it is a classical solution of the
above equation. It is clear that $v$ is  respectively a weak or a classical,
solution of (\ref{3.4}) for every $K\leq 0.$
}
\end{remark}

\begin{corollary}
\label{cor 3.17}
Let $a(x),$ $b(x)\in C^0(M)$ and assume that $b(x)$ is
non-negative and does not vanish  identically. Suppose also that,
for some $H\geq 1$, $\lambda_1^{L_H}(M)\geq 0$. Then there are no
positive $C^2$ solutions of the differential inequality
\begin{equation}
\label{3.20b}
\Delta u + a(x) u - b(x) u^\sigma \geq 0, \qquad \sigma \in \R
\end{equation}
such that
\begin{equation}
\label{3.7b}
\Bigl(\int_{B_r} u^{2(\beta +1)} \Bigr)^{-1}\not\in L^1(+\ty)
\end{equation}
for some $0\leq\beta\leq H-1.$
\end{corollary}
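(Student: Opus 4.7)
The strategy is to reduce to Theorem~\ref{thm 3.2} via the parameter choices $A=0$, $K=0$, and the given $H\geq 1$. Then condition (\ref{3.3}) becomes $\max\{0,0\}=0\leq H-1$, which holds. To obtain the auxiliary function $\varphi$ required by (\ref{3.4}), I appeal to the construction recalled in Remark~\ref{rmk 3.16a}: since $\lambda_1^{L_H}(M)\geq 0$, the extension of the Moss--Pieperbrink and Fisher-Colbrie--Schoen result quoted there produces a positive function $\varphi$ solving $\Delta\varphi + H a(x)\varphi=0$ on $M$ (classically when $a$ has enough regularity, weakly in general; in the latter case the proof of Theorem~\ref{thm 3.2} carries over with the obvious modifications to weak formulation). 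In either case $\varphi$ satisfies (\ref{3.4}) with $K=0$.

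Suppose now for contradiction that a positive $C^2$ solution $u$ of (\ref{3.20b}) exists and satisfies (\ref{3.7b}) for some $0\leq\beta\leq H-1$. Multiplying (\ref{3.20b}) through by $u>0$ gives
\[
u\Delta u + a(x) u^2 - b(x) u^{\sigma+1}\geq 0,
\]
which is precisely (\ref{3.5}) with $A=0$. Condition (\ref{3.6}) is immediate because $u$ is strictly positive everywhere and $b\not\equiv 0$. So apart from (\ref{3.7}), all hypotheses of Theorem~\ref{thm 3.2} are already in place.

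The only substantive point is the derivation of (\ref{3.7}) from (\ref{3.7b}). I choose $p=2$: then $\varphi^{(\beta+1)(2-p)/H}=1$, and (\ref{3.7}) reduces to $\bigl(\int_{\partial B_r} u^{2(\beta+1)}\bigr)^{-1}\notin L^1(+\infty)$. Setting $f(r)=\int_{\partial B_r} u^{2(\beta+1)}$ and $F(r)=\int_{B_r} u^{2(\beta+1)}$, the co-area formula gives $F(r)=\int_0^r f(t)\,dt$. I want to prove: if $1/F\notin L^1(+\infty)$ then $1/f\notin L^1(+\infty)$. Contrapositively, assume $M:=\int_R^\infty 1/f<+\infty$ for some $R$. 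The Cauchy--Schwarz inequality yields, for $r>R$,
\[
(r-R)^2=\Bigl(\int_R^r f^{1/2} f^{-1/2}\,dt\Bigr)^2\leq \bigl(F(r)-F(R)\bigr)\int_R^r\frac{dt}{f(t)}\leq M\,F(r),
\]
so $F(r)\geq (r-R)^2/M$ for large $r$, whence $1/F\in L^1(+\infty)$, contradicting (\ref{3.7b}). Thus (\ref{3.7}) holds, Theorem~\ref{thm 3.2} applies, and the resulting contradiction proves the corollary.

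The main (mild) obstacle is the Cauchy--Schwarz reduction from the solid-ball integrability condition (\ref{3.7b}) to the sphere-integral condition (\ref{3.7}); once this is in place, Remark~\ref{rmk 3.16a} produces the test function $\varphi$ and the remaining hypotheses of Theorem~\ref{thm 3.2} are tautologies.
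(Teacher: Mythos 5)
Your proposal is correct and is essentially the proof the paper intends (the paper only sketches it): Remark~\ref{rmk 3.16a} supplies $\varphi$ from $\lambda_1^{L_H}(M)\geq 0$, Theorem~\ref{thm 3.2} is applied with $K=0$, $A=0$, $p=2$, and your Cauchy--Schwarz reduction from the ball condition (\ref{3.7b}) to the sphere condition (\ref{3.7}) is the same device the paper invokes (see the remark following Theorem~\ref{thm 3.2} and \cite{RS}, Proposition~1.3). The one detail you gloss over is that Theorem~\ref{thm 3.2} is stated for $\sigma\geq 1$ whereas the corollary allows $\sigma\in\R$; as the paper observes, the restriction on $\sigma$ can be dropped here precisely because $u$ is strictly positive, so the term $b(x)u^{\sigma+1}$ is well defined and nonnegative and the argument of Theorem~\ref{thm 3.2} goes through unchanged.
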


If $a(x)$ is assumed to be $C^{0,\alpha}$, the the corollary
follows immediately from Remark~\ref{rmk 3.16a} and from
Theorem~\ref{thm 3.2} with $K=0$. In the general case, the function
$v$ satisfies inequality (\ref{3.4}) with $K=0$ only in weak sense, and
the argument of Theorem~\ref{thm 3.2} needs to be slightly modified to be
carried  out in this situation. Note also that, since the corollary deals
with  strictly positive solutions, we can drop the assumption that
$\sigma\geq - 1$.

In order to apply Theorem~\ref{thm 3.2} and obtain the non-existence result
mentioned at the beginning of this section, one needs to verify that the
(non-)integrability condition (\ref{3.7}) holds. In principle, this may be
obtained combining a-priori upper estimates for $u$ with appropriate bounds
for the volume growth of balls. Both estimates can be deduced imposing
lower bounds on the radial Ricci curvature of the manifold.

In the following lemma we deduce an a-priori integral estimate for
nonnegative solutions of (\ref{3.5}), which will enable us to
obtain (\ref{3.7}) under the sole assumption of a volume growth
condition.

\begin{lemma}
\label{lemma 3.1}
Let $(M,\langle \,,\,\rangle)$ be a complete Riemannian manifold,
and let $a(x), b(x)\in C^0(M)$ with $b(x)>0$ on $M.$ Assume
that $u\geq 0$ is a $C^2$ solution of the differential inequality
\begin{equation}
\label{3.a} u\Delta u +a(x) u^2 - b(x) u^{\sigma +1} \geq -A
|\nabla u|^2,
\end{equation}
for $A\leq 1$ and $\sigma>1.$ Then for every $p\geq 1,$ $p>A+2$
there exist constants $C_1,$ $C_2>0$ which depend only on
$p$, $\sigma$ and $R_0>0$  such that, for every $R\geq R_0,$
\begin{equation}
\label{3.aa}
\int_{B_R} b(x) u^{p+\sigma -2} \leq
{C_1}{R^{-2\frac{p+\sigma-2}{\sigma-1}}} \int_{B_{2R}}
b(x)^{-\frac{p-1}{\sigma-1}} +
C_2 \int_{B_{2R}}
\Bigl(\frac{a_+(x)}{b(x)}\Bigr)^{\frac{p-1}{\sigma-1}}a_+(x)
\end{equation}
\end{lemma}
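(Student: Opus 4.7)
The plan is to prove \eqref{3.aa} by a Caccioppoli-type integral argument driven by the differential inequality \eqref{3.a}. The natural test function is $\phi^{2}u^{p-3}$ for a nonnegative cutoff $\phi\in C_c^{\infty}(B_{2R})$ with $\phi\equiv 1$ on $B_{R}$. Multiplying \eqref{3.a} by this test function converts the term $u\Delta u$ into $\phi^{2}u^{p-2}\Delta u$; integrating by parts and using $a\leq a_{+}$ (legitimate since $u^{p-1}\geq 0$) yields
\begin{equation*}
\int b\phi^{2}u^{p+\sigma-2}+(p-2-A)\int \phi^{2}u^{p-3}|\nabla u|^{2}\leq \int a_{+}\phi^{2}u^{p-1}-2\int \phi u^{p-2}\langle\nabla\phi,\nabla u\rangle.
\end{equation*}
This is exactly where the hypothesis $p>A+2$ enters: it makes the coefficient $p-2-A$ strictly positive, so a Young inequality on the mixed gradient term absorbs the $|\nabla u|^{2}$ integral into the left-hand side and leaves
\begin{equation*}
\int b\phi^{2}u^{p+\sigma-2}\leq \int a_{+}\phi^{2}u^{p-1}+C\int u^{p-1}|\nabla\phi|^{2}.
\end{equation*}

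Next, I would estimate each of the two right-hand terms by Young's inequality with conjugate exponents $\tfrac{p+\sigma-2}{p-1}$ and $\tfrac{p+\sigma-2}{\sigma-1}$, factoring the integrand so that $b\phi^{2}u^{p+\sigma-2}$ is extracted to the power $\theta=(p-1)/(p+\sigma-2)$. The first term produces
\begin{equation*}
\int a_{+}\phi^{2}u^{p-1}\leq \varepsilon\int b\phi^{2}u^{p+\sigma-2}+C_{\varepsilon}\int \phi^{2}a_{+}\Bigl(\frac{a_{+}}{b}\Bigr)^{\!(p-1)/(\sigma-1)},
\end{equation*}
and the second produces, up to absorbable $\varepsilon$-terms, an expression of the form
\begin{equation*}
C_{\varepsilon}\int b^{-(p-1)/(\sigma-1)}\,\phi^{-2(p-1)/(\sigma-1)}\,|\nabla\phi|^{2(p+\sigma-2)/(\sigma-1)}.
\end{equation*}

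The main obstacle is the apparently singular factor $\phi^{-2(p-1)/(\sigma-1)}$ in this last integral. The standard remedy is to choose the cutoff in the form $\phi=\eta^{k}$, where $\eta\in C_c^{\infty}(B_{2R})$ is a standard cutoff with $\eta\equiv 1$ on $B_{R}$ and $|\nabla\eta|\leq c/R$, and $k$ is taken large enough that $k(\sigma-1)\geq p+\sigma-2$. Then $\nabla\phi=k\eta^{k-1}\nabla\eta$, and the combination $\phi^{-2(p-1)/(\sigma-1)}|\nabla\phi|^{2(p+\sigma-2)/(\sigma-1)}$ reduces to a nonnegative power of $\eta$ times $|\nabla\eta|^{2(p+\sigma-2)/(\sigma-1)}$, hence is bounded pointwise by $C R^{-2(p+\sigma-2)/(\sigma-1)}$. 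Absorbing the $\varepsilon$-contributions into the left-hand side and using $\phi\equiv 1$ on $B_{R}$ then gives exactly the claimed bound \eqref{3.aa}.

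A minor technical point is that when $p<3$ the test function $\phi^{2}u^{p-3}$ is singular on $\{u=0\}$. I would handle this by carrying out the whole argument with $u$ replaced by $u+\delta$, $\delta>0$, observing that all constants in the final inequality are independent of $\delta$, and passing to the limit by monotone/dominated convergence; the assumption $p\geq 1$ ensures integrability of every quantity in play.
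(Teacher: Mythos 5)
Your proof is correct and follows essentially the same route as the paper's. Both arguments regularize with $u+\delta$ (the paper uses $u+\epsilon$) and test the inequality against $\phi^2 (u+\delta)^{p-3}u$, use $p>A+2$ to absorb the gradient integral, and then split the two resulting right-hand terms with conjugate exponents $\frac{p+\sigma-2}{p-1}$ and $\frac{p+\sigma-2}{\sigma-1}$. The only differences are expositional: where you apply Young's inequality with $\varepsilon$-absorption, the paper applies H\"older's inequality and then divides through by $\bigl(\int_{B_{2R}} b\psi^2 u^{p+\sigma-2}\bigr)^{(p-1)/(p+\sigma-2)}$ (which it justifies is positive by choosing $R\geq R_0$ so that $u\not\equiv 0$ on $B_R$); and where you take $\phi=\eta^k$ with $k(\sigma-1)\geq p+\sigma-2$, the paper simply requires a cutoff with $|\nabla\psi|\leq \frac{C}{R}\psi^{(p-1)/(p+\sigma-2)}$ — your power-cutoff construction is exactly what produces such a $\psi$. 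One small point: when $p=1$ your Young split degenerates (the H\"older exponent $\frac{p+\sigma-2}{p-1}$ becomes $+\infty$); the paper handles $p=1$ separately since then the intermediate inequality already has the desired form and no split is needed. This is a two-line fix, but worth noting since the statement allows $p=1$.
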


\begin{proof}
Observe first that  we may assume that $u\not\equiv 0$, for otherwise
there is nothing to prove. Thus, there exists $R_0>0$ such that
$u\not\equiv 0$ on $B_{R}$  for every $R\geq R_0.$

Next, for  every $R\geq R_0,$ let $\psi=\psi_R:M\to[0,1]$ be a
smooth cut-off function such that
\begin{equation}
\label{3.b}
\psi\equiv 1 \,\, \text{ on }\, B_R,\quad
\psi\equiv 0 \,\,\text{ on }\, M\setminus B_{2R},
\,\,\text{and}\,\,
|\nabla \psi| \leq \frac CR \psi^{\frac{p-1}{p+\sigma -2}} \,\,
\text{ on }\, B_{2R},
\end{equation}
for some $C$ which depends only on $p$ and $\sigma$. Note that this
is possible since the exponent $\frac{p-1}{p+\sigma -2}$ is
strictly less than $1.$ Having fixed $\epsilon >0,$ we let $W$ be the
vector field defined by
\begin{equation*}
W=\psi^2 (u+\epsilon)^{p-3} u\nabla u.
\end{equation*}
A computation that uses (\ref{3.a})
yields
\begin{equation*}
\begin{split}
\diver W &\geq
\psi^2(u+\epsilon)^{p-3}
\bigl\{
-a(x) u^2 + b(x)u^{\sigma +1} +\bigl(1-A -(p-3)\frac
u{u+\epsilon}\bigr)|\nabla u|^2\bigr \}
\\
&+2\psi (u+\epsilon)^{p-3}u
\langle \nabla u, \nabla \psi\rangle.
\end{split}
\end{equation*}
We estimate the last term on the right hand side using
Cauchy-Schwarz's inequality and Young's inequality
$2ab\leq \lambda a^2 + \lambda^{-1} b^2$ with $\lambda = p-2-A>0$,
to obtain
\begin{equation*}
\diver W  \geq
\psi^2(u+\epsilon)^{p-3}
\bigl\{
-a_+(x) u^2 + b(x)u^{\sigma +1}
\bigl\} - \frac 1{p-2-A}u(u+\epsilon)^{p-2} |\nabla \psi|^2.
\end{equation*}
We integrate the above inequality, apply  the divergence theorem,
rearrange, let $\epsilon \to 0+$ and use the dominated convergence
theorem, in this order, to deduce that
\begin{equation}
\label{3.d}
\int_{B_{2R}} b(x)\psi^2 u^{p+\sigma -2} \leq
\frac 1{p-2-A}\int_{B_{2R}}  u^{p-1} |\nabla \psi|^2
+
\int_{B_{2R}} \psi^2 a_+(x) u^{p-1}
\end{equation}
If $p=1$ the conclusion follows immediately using (\ref{3.b}).
If $p>1,$, we denote  by $I$ and $II$ the two integrals on the right
hand side, and use H\"older inequality with conjugate exponents
\begin{equation*}
\frac{p+\sigma -2}{p-1}(>1) \quad\text{and}\quad \frac{p+\sigma
-2}{\sigma-1},
\end{equation*}
and the assumption that $b(x)>0$ to estimate
\begin{equation*}
I\leq \Bigl(\int_{B_{2R}} b(x) \psi^2 u^{p+\sigma
-2}\Bigr)^{\frac{p-1}{p+\sigma -2}}
\Bigr(
\int_{B_{2R}}
\psi^{-2 \frac{p-1}{\sigma-1}} b(x)^{-\frac{p-1}{\sigma-1}}
|\nabla\psi|^{2\frac{p+\sigma -2}{\sigma-1}}
\Bigr)^ {\frac{\sigma -1}{p + \sigma-2}}
\end{equation*}
and
\begin{equation*}
II\leq
\Bigl(\int_{B_{2R}} b(x) \psi^2 u^{p+\sigma
-2}\Bigr)^{\frac{p-1}{p+\sigma -2}}
\Bigr(
\int_{B_{2R}} \psi^2 a_+(x)^{\frac{p+\sigma
-2}{\sigma-1}}b(x)^{-\frac{p-1}{\sigma - 1}}
\Bigr)^ {\frac{\sigma -1}{p + \sigma-2}}
\end{equation*}
Inserting into (\ref{3.d}), noting that the integral on the
left hand side is strictly positive by the choice of $R$, and simplifying,
we obtain
\begin{multline*}
\int_{B_{2R}} b(x)\psi^2 u^{p+\sigma -2}
\leq
\Bigr\{
\frac 1{p-2-A}\Bigl(\int_{B_{2R}}
\psi^{-2 \frac{p-1}{\sigma-1}} b(x)^{-\frac{p-1}{\sigma-1}}
|\nabla\psi|^{2\frac{p+\sigma -2}{\sigma-1}}
\Bigr)^ {\frac{\sigma -1}{p + \sigma-2}}
\\
+
\Bigl(
\int_{B_{2R}} \psi^2 a_+(x)^{\frac{p+\sigma
-2}{\sigma-1}}b(x)^{-\frac{p-1}{\sigma - 1}}
\Bigr)^{\frac{\sigma -1}{p + \sigma-2}}
\Bigr\}^{\frac{p+\sigma -2}{\sigma -1}}
.
\end{multline*}
The required conclusion  follows again using (\ref{3.b}) and the
elementary inequality $(a+b)^\tau \leq 2^\tau(a^\tau +b^\tau)$ valid for
$a,b, \tau\geq 0$.
\end{proof}

We are now ready for our main non-existence result.

\begin{theorem}
\label{thm 3.3}
Let $(M,\langle \,,\,\rangle)$ be a complete Riemannian manifold,
and let $a(x),$ $ b(x)$$\in C^0(M)$ where  $b(x)>0$ on $M$ and
\begin{equation}
\label{a}
b(x)\geq \frac C{r(x)^{\mu}}
\end{equation}
for $r(x)\gg 1$ and for some constants $C>0$ and $0\leq \mu \leq 2.$
Assume that
\begin{equation}
\label{b}
\rmi \, \sup_M \frac{a_+(x)}{b(x)}<+\ty \,\, \text{and}\,\,
\rmii\,
\int_{B_r} a_+(x) = O\bigl( r^{2-\mu} \log r \bigr) \, \text{ as
}  r\to +\ty,
\end{equation}
and that, for some $H\geq 1$, the operator $L_H = \Delta +H a(x)$
satisfies
\begin{equation}
\label{c}
\lambda_1^{L_H}(M)\geq 0.
\end{equation}
Finally, let $A$ and $\sigma$ be such that $A\leq 1,$  $A<H-1$,
$1<\sigma \leq 2H+1$ and $\sigma <2H-A$ and assume that
\begin{equation}
\label{e}
\vol B_r = O\Bigl( r^{2 +(2-\mu) \frac {2H}{\sigma-1}}
\log r\Bigr)
\text{ as
} \, r\to +\ty.
\end{equation}
Then the only non-negative $C^2$ solution $u$ of the differential
inequality
\begin{equation}
\label{d}
u\Delta  u+ a(x) u^2 - b(x) u^{\sigma +1} \geq -A |\nabla u|^2
\end{equation}
is $u\equiv 0.$
\end{theorem}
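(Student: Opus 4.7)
The plan is to argue by contradiction: suppose $u\geq 0$ is a $C^2$ solution of (\ref{d}) with $u\not\equiv 0$. Assumption (\ref{c}) combined with Remark~\ref{rmk 3.16a} (the Moss--Pieperbrink and Fisher-Colbrie--Schoen criterion) produces a positive function $\vp$ satisfying $\Delta \vp + H a(x) \vp\le 0$, i.e. inequality (\ref{3.4}) with $K=0$ (in the weak sense, as $a$ is only $C^0$; the paragraph following Corollary~\ref{cor 3.17} notes this is enough, after the minor modification of the proof of Theorem~\ref{thm 3.2} mentioned there). I would then invoke Theorem~\ref{thm 3.2} with $K=0$, $\beta=H-1$, and $p=2$. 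The structural hypothesis $\max\{0,A\}\leq H(K+1)-1 = H-1$ follows from $H\geq 1$ and $A<H-1$; condition (\ref{3.6}) is automatic since $b>0$ on $M$; and for these choices the non-integrability requirement (\ref{3.7}) reduces (the factor involving $\vp$ drops out since $(\beta+1)(2-p)/H=0$) to
\begin{equation*}
\Bigl(\int_{\bdr B_t} u^{2H}\Bigr)^{-1}\notin L^1(+\infty),
\end{equation*}
which is the only substantive point I need to verify.

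To do so, apply Lemma~\ref{lemma 3.1}; denote its parameter by $q$ (to avoid conflict with the $p$ of Theorem~\ref{thm 3.2}) and choose $q:=2H-\sigma+2$, so that $q+\sigma-2=2H$. The theorem's assumptions $\sigma\leq 2H+1$ and $\sigma<2H-A$ translate exactly into the lemma's hypotheses $q\geq 1$ and $q>A+2$. Substitute (\ref{a}) to bound $b(x)^{-(q-1)/(\sigma-1)}\leq C\,r(x)^{\mu(q-1)/(\sigma-1)}$, use (\ref{b})(i) to bound $(a_+/b)^{(q-1)/(\sigma-1)}$ by a constant, and insert the volume growth (\ref{e}) and the integral bound (\ref{b})(ii) into the right-hand side of (\ref{3.aa}). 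A direct algebraic simplification of the exponents of $R$ (they all collapse to $2-\mu$, a fortunate cancellation driven precisely by the choice $q+\sigma-2=2H$) yields
\begin{equation*}
\int_{B_R} b(x)\,u^{2H} \leq C\,R^{2-\mu}\log R
\end{equation*}
for all $R$ sufficiently large.

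It remains to pass from this weighted volume bound to the non-integrability of $(\int_{\bdr B_t} u^{2H})^{-1}$. By (\ref{a}), for $t$ large $(\int_{\bdr B_t} u^{2H})^{-1}\geq C t^{-\mu}(\int_{\bdr B_t} b\,u^{2H})^{-1}$. On each dyadic annulus $[2^k R_0, 2^{k+1}R_0]$ Cauchy--Schwarz gives
\begin{equation*}
(2^k R_0)^2 \leq \biggl(\int_{2^k R_0}^{2^{k+1}R_0}\! t^\mu\!\!\int_{\bdr B_t}\! b\,u^{2H}\,dt\biggr)\biggl(\int_{2^k R_0}^{2^{k+1}R_0}\! t^{-\mu}\!\Bigl(\int_{\bdr B_t}\! b\,u^{2H}\Bigr)^{-1} dt\biggr),
\end{equation*}
and the first factor is $\leq (2^{k+1}R_0)^\mu\int_{B_{2^{k+1}R_0}} b\,u^{2H}\leq C(2^k R_0)^2\log(2^{k+1}R_0)$ by the previous step. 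Hence the second factor is $\geq C/\log(2^{k+1}R_0)$, and the dyadic series $\sum_k 1/\log(2^{k+1}R_0)$ diverges (essentially $\sum 1/k$), giving (\ref{3.7}). Theorem~\ref{thm 3.2} then contradicts $u\not\equiv 0$, forcing $u\equiv 0$.

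The main obstacle is precisely the passage in the last paragraph: the bound $R^{2-\mu}\log R$ for the weighted volume integral, which becomes growth of order $R^2\log R$ for the unweighted one, is borderline---a single Cauchy--Schwarz on $[R,\infty)$ only produces $(r-R)^2/(r^2\log r)\to 0$, which is too weak. The dyadic decomposition above is what extracts the logarithmically divergent sum, and the logarithmic slack in both (\ref{b})(ii) and (\ref{e}) is exactly the sharpness threshold that permits this balance.
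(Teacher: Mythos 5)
Your proposal is correct and follows essentially the same route as the paper's own proof: you invoke Lemma~\ref{lemma 3.1} with the Lemma's exponent equal to $2H+2-\sigma$, use (\ref{a}), (\ref{b}) and (\ref{e}) to control the right-hand side of (\ref{3.aa}), and then apply Theorem~\ref{thm 3.2} with $K=0$, $\beta=H-1$ (and, as you note explicitly, $p=2$, which the paper leaves implicit). The one cosmetic divergence is in the passage from the ball-integral growth bound to the non-integrability condition (\ref{3.7}): the paper converts the weighted estimate into $\int_{B_r}u^{2H}=O(r^2\log r)$, observes that $r/\int_{B_r}u^{2H}\notin L^1(+\infty)$, and delegates the implication ``$r/\int_{B_r}u^{2H}\notin L^1$ $\Rightarrow$ $1/\int_{\partial B_r}u^{2H}\notin L^1$'' to Proposition~1.3 of \cite{RS}, whereas you keep the $b$-weight and carry out a self-contained dyadic Cauchy--Schwarz argument on the sphere integrals, converting the weight back via (\ref{a}) at the last moment. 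The dyadic decomposition you write out is precisely the content hiding behind the citation, so the two treatments are interchangeable; you are also slightly more careful than the paper in acknowledging that (\ref{3.4}) only holds weakly when $a\in C^0(M)$, referring (correctly) to the remark that follows Corollary~\ref{cor 3.17}.
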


\begin{proof}
If we set $p=2H +2-\sigma,$ the conditions imposed on the
parameters imply that $p$ satisfies the assumptions listed in the
statement of Lemma~\ref{lemma 3.1}. The lemma and condition (\ref{b}) \rmi
show  that  there exist constants $C_i>0$
such that
\begin{equation}
\label{f}
\int_{B_r} b(x) u^{2H} \leq
{C_1}{r^{-\frac{4H}{\sigma-1}}} \int_{B_{2r}}
b(x)^{1-\frac{2H}{\sigma-1}} +
C_2 \int_{B_{2r}}
a_+(x)
\end{equation}
for $r>0$ sufficiently large. We use condition (\ref{a}) to estimate
from below the integral on the left hand side. On the other hand, since
$\sigma <2H+1,$ we may again use condition (\ref{a}) to estimate from above
the first integral on the right hand side, and (\ref{b}) \rmii to estimate
from above the second integral,  and deduce that, for $r$ sufficiently large,
\begin{equation*}
\int_{B_r} u^{2H} \leq C
\bigl( r^{(\mu -2)\frac{2H}{\sigma -1}}\vol
B_{2r} + r^2\log r
\bigr),
\end{equation*}
whence, using the volume growth condition (\ref{f}) we conclude
that
\begin{equation*}
\int_{B_r} u^{2H} \leq Cr^2 \log r \qquad \text{for }\, r\gg 1.
\end{equation*}
This immediately implies that, for $r$ large,
\begin{equation*}
\frac{ r}{\int_{B_r} u^{2H}} \geq C \frac 1{r\log r} \not\in
L^1(+\ty),
\end{equation*}
which in turn yields (see, e.g., \cite{RS} Proposition~1.3)
\begin{equation*}
\frac 1{\int_{\bdr B_r} u^{2H} } \not \in L^1(+\ty).
\end{equation*}
We may therefore apply Theorem~\ref{thm 3.2} with $K=0$ and $\beta = H-1$
to deduce that $\mathrm{supp}\, u=\emptyset$, that is, $u\equiv 0.$
\end{proof}

\begin{remark}
\label{rmk a}
{\rm
The argument used in the proof shows that the condition that $\frac{a_+}{b}$ is
bounded above may be removed provided we replace (\ref{f}) with
\begin{equation}
\label{f'}
\int_{B_r} a_+^{\frac{2H}{\sigma -1}} = O
\Bigl(
r^{2-\mu\frac{2H}{\sigma -1}} \log r
\Bigr) \quad \text{as }\, r\to +\ty.
\end{equation}
Note that since the integral on the left hand side is a non-decreasing
function of $r$ this also imposes the further restriction
$\mu\leq (\sigma-1)/H$, with corresponding restrictions being imposed
on the range of the other parameters.
}
\end{remark}

\begin{remark}
\label{rmk b}
{\rm
In the case where  the ambient manifold is Euclidean space, we can
compare our Theorems~\ref{thm 3.2} and \ref{thm 3.3} with the
results in \cite{AB}, Section~3. We consider the equation
\begin{equation}
\label{0.2'}
\Delta u +\lambda a(x) u - u^2=0 \quad\text{on }\, \R^m,
\end{equation}
which, with  $a(x)=g(x)$ and a change of scaling,
is easily seen to be equivalent to (\ref{0.2}). We assume, as in
\cite{AB}, that $a(x)$ is positive somewhere, and that its positive
part $a_+(x)$ satisfies the estimate
\begin{equation*}
a(x)\leq \frac{k}{|x|^2},
\end{equation*}
for some positive constant $k.$ According to the discussion at
the end of Section~1, it follows that the principal eigenvalue
$\lambda_*$ of the linear equation associated to (\ref{0.2'})
is strictly positive and satisfies
\begin{equation*}
\lambda_*\geq \frac{(m-2)^2}{4k}.
\end{equation*}
Moreover, if $\lambda \leq \lambda_*$,  we have
$\lambda_1^{\Delta +H\lambda a(x)}(\R^m)\geq 0$ provided
$H\leq \frac{\lambda_*}{\lambda}$. On the other hand, if $u$ is a
non-negative solution of (\ref{0.2'}),  Lemma~\ref{lemma 3.1}
with $A=0,$ $\sigma=2$ and $p>2$ shows that
\begin{equation*}
\int_{B_r} u^p \leq C
\begin{cases}
r^{m-2p} &\text{if } \, m-2p >0\\
\log r &\text{if } \, m-2p =0\\
1 &\text{if } \, m-2p <0,
\end{cases}
\end{equation*}
and therefore
\begin{equation}
\label{not int}
\frac{r}{\int_{B_r} u^p} \not \in L^1(+\ty)
\end{equation}
provided
\begin{equation*}
p\geq \frac{m-2}2.
\end{equation*}
In order to apply Corollary~\ref{cor 3.17}, the non-integrability
condition must hold with $p$ satisfying
\begin{equation*}
p= 2 (\beta + 1)\leq 2H.
\end{equation*}
Summing up, if $\frac{\lambda_*}{\lambda}\geq \min\{1,
\frac{m-2}4\}$ then Corollary~\ref{cor 3.17} applies, and we
conclude that every non-negative solution of (\ref{0.2'})
vanishes identically. To compare with Theorem~3.9 in \cite{AB}, we
point out we are assuming a less stringent condition on $a_+$ and that
we do not require that $u$ tends to zero at infinity.

On the other hand, assume that $a_+(x)$ satisfies the more
stringent condition assumed in \cite{AB},
\begin{equation*}
a_+(x) \leq \min\{ \frac{k}{|x|^2} , \frac A{|x|^{2+\delta}}\}
\end{equation*}
for some positive constants $A$,  $k$ and $\delta$. According to
\cite{AB}, Theorem~3.5, every positive solution of (\ref{0.2'})
which tends to zero at infinity satisfies the estimate
\begin{equation}
\label{u est}
u(x)\leq \frac{C}{|x|^{m-2}},
\end{equation}
and, in fact, by Theorem~3.4 therein, every positive solution
tends to zero at infinity, provided $a(x)$ is strictly negative
off a compact.
Now, it is easy to see that if $u$ satisfies (\ref{u est}), then
\begin{equation*}
\int_{B_r} u^2 \leq Cr^2,
\end{equation*}
and, clearly, $\lambda_1^{\Delta +\lambda a(x)}(\R^m)\geq 0$
for every $\lambda\leq \lambda_*.$ An application of
Corollary~\ref{cor 3.17} with $\beta +1= H=1$
shows that $u$ vanishes identically. We therefore recover the conclusion
of Theorem~3.9 in \cite{AB}.
}
\end{remark}

Theorem~\ref{thm 3.2} does not cover the "endpoint" case where
$K=-1$ in (\ref{3.4}), which we are going to consider presently.
We therefore assume that there exists a positive solution $\vp$ of
\begin{equation}
\label{3.4bis}
\Delta \vp + H a(x) \vp \leq \frac {|\nabla \vp|^2}\vp, \qquad H>0.
\end{equation}
If $u$ is a $C^2$ solution of (\ref{3.5}) with $\sigma\geq 0$,
we define $v=\vp^{-\gamma} u$, $\gamma\geq 0$.
A computation that uses (\ref{3.4bis}),
(\ref{3.5}) and Young inequality yields
\begin{equation*}
\begin{split}
v\Delta v & =(\gamma H -1) a(x) \vp^{-2\gamma} u^2 + b(x)
\vp^{-2\gamma}u^{\sigma +1} \\
& + \gamma^2\vp^{-2\gamma-2} u^2 |\nabla \vp|^2
-A \vp^{-2\gamma} |\nabla u|^2 -2\gamma \vp^{-2\gamma-1} u\langle
\nabla u,\nabla \vp\rangle \\
& \geq (\gamma H -1) a(x) \vp^{-2\gamma} u^2 + b(x)
\vp^{-2\gamma}u^{\sigma +1} \\
& +  \gamma^2(1-\frac 1 \epsilon)
\vp^{-2\gamma-2} u^2 |\nabla \vp|^2
-(A +\epsilon)\vp^{-2\gamma} |\nabla u|^2
\end{split}
\end{equation*}
Choosing  $\gamma = 1/H$ and  $\epsilon = -A$, the right
hand side reduces to
\begin{equation*}
b(x) \vp^{-2/H}u^{\sigma +1}
 + \frac 1 {H^2} (1+\frac 1 A)
\vp^{-2/H-2} u^2 |\nabla \vp|^2,
\end{equation*}
and we easily deduce that, if $A\leq -1$,
then the function $v$ satisfies
\begin{equation}
\label{3bis}
\Delta v\geq b(x)
\vp^{(\sigma-1)/H}v^{\sigma}.
\end{equation}

Using (\ref{3bis}) we obtain the following version of
Theorem~\ref{thm 3.2}.

\begin{theorem}
\label{thm 3.2'}
Let $a(x),$ $b(x)\in C^0(M)$, with $b(x)\geq 0,$ and assume
that $\vp$ is a positive $C^2$ solution of (\ref{3.4bis})
satisfying
\begin{equation}
\label{5bis}
\vp(x)\geq C r(x)^{1/\delta}
\end{equation}
for $ r(x)\gg 1$, and some constants $C>0$ and $\delta>0$.
Then the differential inequality
\begin{equation*}
\label{3.5bis}
u\Delta u + a(x)u^2 - b(x) u^{\sigma +1} \geq -A |\nabla u|^2,
\end{equation*}
with $\sigma \geq 0$ and $A\leq -1$, has no non-negative $C^2$
solution satisfying
\begin{equation}
\label{6bis}
\mathrm{supp }\,u \cap \{x:M\,:\, b(x)>0\}\neq \emptyset
\end{equation}
and
\begin{equation}
\label{7bis}
\frac{r^{\delta p}}{\int_{\bdr B_r} u^p}\not\in L^1(+\ty)
\end{equation}
for some $p>1.$
\end{theorem}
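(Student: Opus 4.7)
The plan is to exploit the substitution carried out immediately before the statement: with $\gamma = 1/H$ and $\epsilon = -A$ (legitimate because $A \leq -1$ forces $1 + 1/A \geq 0$, so the $|\nabla\vp|^2$ contribution there has a non-negative coefficient and can be discarded), the function $v := \vp^{-1/H} u$ is non-negative and satisfies (\ref{3bis}), namely $\Delta v \geq b(x)\,\vp^{(\sigma-1)/H} v^\sigma \geq 0$. In particular $v$ is subharmonic, and by (\ref{6bis}) together with $\vp > 0$, $\mathrm{supp}\,v = \mathrm{supp}\,u$ still meets $\{b > 0\}$.

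I would then run the integration-by-parts / co-area argument of Theorem~\ref{thm 3.2} directly on $v$, with no auxiliary $\vp^{2\alpha}$ weight (since $\vp$ has already been absorbed into $v$). Concretely, set $Z = v\,r(v)\,\nabla v$ for a function $r \geq 0$ with $s(v) := r(v) + v r'(v) > 0$; the pointwise computation $\mathrm{div}\,Z \geq s(v) |\nabla v|^2 + b\,\vp^{(\sigma-1)/H} v^{\sigma+1} r(v)$, tested against the Lipschitz radial cut-off $\psi_\delta$ used in Theorem~\ref{thm 3.2}, combined with the divergence theorem and Cauchy--Schwarz on the annulus $B_{t+\delta}\setminus B_t$, yields, after letting $\delta \to 0^+$ via the co-area formula, the key inequality $h(t)^2 \leq h'(t) \int_{\bdr B_t} v^2 r(v)^2/s(v)$, where $h(t) = \int_{B_t} s(v)|\nabla v|^2$. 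Specialising to $r_n(t) = (t^2 + 1/n)^{(p-2)/2}$, $s_n(t) = \min\{p-1,1\}\,r_n(t)$ and passing to the limit $n \to \infty$ by monotone/dominated convergence, this integrates (exactly as in (3.14)--(3.16) in the proof of Theorem~\ref{thm 3.2}) to $\bigl(\int_{B_R} v^{p-2}|\nabla v|^2\bigr)^{-1} \geq C_p \int_R^{+\ty}\bigl(\int_{\bdr B_t} v^p\bigr)^{-1}\,dt$, which is finite as soon as $v$ is non-constant on $B_R$.

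The last step is to transfer (\ref{7bis}) into the non-integrability of $\bigl(\int_{\bdr B_t} v^p\bigr)^{-1}$. Since $v^p = \vp^{-p/H} u^p$, the bound (\ref{5bis}) gives $\int_{\bdr B_t} v^p \leq C'\, t^{-p/(H\delta)} \int_{\bdr B_t} u^p$, hence $\bigl(\int_{\bdr B_t} v^p\bigr)^{-1} \geq C''\, t^{p/(H\delta)}/\int_{\bdr B_t} u^p$; combined with (\ref{7bis}) this is what produces the required divergence and hence the contradiction, forcing $\nabla v \equiv 0$ on $M$, i.e. $v \equiv c$ for some constant $c \geq 0$. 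The case $c = 0$ gives $u \equiv 0$, contradicting (\ref{6bis}); while $c > 0$, inserted into (\ref{3bis}), forces $0 = \Delta v \geq b\,\vp^{(\sigma-1)/H} c^\sigma > 0$ on $\{b > 0\}$, again a contradiction. The main technical obstacle I anticipate is precisely the bookkeeping of the two polynomial weights $r^{p/(H\delta)}$ and $r^{\delta p}$ at this last step: one must verify that the parameters $H$ and $\delta$ built into hypotheses (\ref{5bis}) and (\ref{7bis}) are indeed compatible with the substitution exponent $1/H$, which is where the precise balance of the assumptions enters.
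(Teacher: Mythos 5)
Your approach is essentially the one the paper takes: set $v=\vp^{-1/H}u$, use (\ref{3bis}) (valid since $A\leq -1$) to deduce that $v$ is subharmonic with $\Delta v\geq b(x)\,\vp^{(\sigma-1)/H}v^\sigma$, show $\bigl(\int_{\bdr B_r}v^p\bigr)^{-1}\not\in L^1(+\ty)$, conclude $v$ is constant, and read off a contradiction with (\ref{6bis}). The only departure is that where the paper simply invokes Theorem~B of \cite{RS} -- a Liouville theorem for subharmonic functions under exactly the surface-integral non-integrability condition you produce -- you re-derive it by rerunning the vector-field/co-area argument of Theorem~\ref{thm 3.2} with the weight $\vp^{2\alpha}$ stripped away; this is a correct, self-contained substitute for the citation, not a different method. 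Your closing step (reading $0=\Delta v\geq b\,\vp^{(\sigma-1)/H}c^\sigma$ directly out of (\ref{3bis}) when $v\equiv c>0$, forcing $b\equiv 0$) is slightly cleaner than the literal ``as in the proof of Theorem~\ref{thm 3.2}'' instruction, which would pass through $u^{H}=C\vp$ and compare (\ref{3.4bis}) with the inequality satisfied by $u$; both yield the same conclusion.

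Your worry about the exponent bookkeeping is well placed, and the paper's own one-line assertion does not dispel it. From $v^p=\vp^{-p/H}u^p$ and (\ref{5bis}) one gets $\int_{\bdr B_r}v^p\leq C\,r^{-p/(H\delta)}\int_{\bdr B_r}u^p$, so what is needed is $r^{p/(H\delta)}/\int_{\bdr B_r}u^p\not\in L^1(+\ty)$, while (\ref{7bis}) supplies $r^{\delta p}/\int_{\bdr B_r}u^p\not\in L^1(+\ty)$; these match for large $r$ only when $p/(H\delta)\geq\delta p$, i.e.\ $H\delta^2\leq 1$, a compatibility condition not stated in the theorem. The most natural repair is to read (\ref{5bis}) as $\vp(x)\geq C\,r(x)^{H\delta}$, which makes the transfer immediate and degenerates correctly to the ``$\delta=0$'', $\vp$-bounded-below case actually used in Corollary~\ref{cor 3.2''}. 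Under that reading (or with the extra hypothesis $H\delta^2\leq 1$) your argument closes as written.
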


\begin{proof}
According to (\ref{3bis}) above, the function $v=\vp^{-1/H}u$ is
sub-harmonic. Further, (\ref{5bis}) and (\ref{7bis}) imply that
\begin{equation*}
\Bigl(\int_{\bdr B_r} v^p \Bigr)^{-1}\not \in L^1(++\ty).
\end{equation*}
An application of Theorem~B in \cite{RS} shows  that $v$ is
constant. The conclusion now follows as in the proof of
Theorem~\ref{thm 3.2}.
\end{proof}

Note that, even in the case of Theorem~\ref{thm 3.2'}, if $A<-1,$
then the conclusion can be strengthened to assert that every
non-negative solution of (\ref{3.5bis}) vanishes identically, unless
$a(x)=b(x)\equiv 0.$

In applying Theorem~\ref{thm 3.2'} it is of course crucial to
being able to find positive solutions of (\ref{3.4bis}) satisfying
the asymptotic lower bound (\ref{5bis}).
By contrast, in order to apply Theorem~\ref{thm 3.2} one needs a positive
solution of (\ref{3.4}), whose existence, in typical applications
like the one exemplified by Theorem~\ref{thm 3.3} above, is guaranteed
by means of assumptions on the spectrum of a suitable operator.

Observe now that if $v$ is a solution of the Poisson equation
\begin{equation*}
\Delta v = a(x)
\end{equation*}
then the function $\vp = e^{-v}$ is a positive solution of
\begin{equation*}
\Delta \vp + a(x) \vp = \frac{|\nabla\vp|^2}\vp,
\end{equation*}
and furthermore, an upper bound for $v$ yields a lower bound for $\vp.$

The Poisson equation
on complete Riemannian manifolds has been extensively studied using heat
kernel techniques to obtain bounds on the Green kernel.  To illustrate
an application of Theorem~\ref{thm 3.2'}, we consider the  elementary case
where the positive part of $a(x)$ is integrable.  Then we have the following
lemma (see, e.g., the proof of Theorem 3.2 in \cite{NST})

\begin{lemma}
\label{lemma Poisson}
Let $(M, \langle \,,\,\rangle)$ be a complete, non-parabolic
manifold, and let $\rho\in C^{0,\alpha}(M)\cap L^1(M)$
($0\leq \alpha< 1 $) be a non-negative
function.
Then, there exists a solution $v\in C^{2}$ of the Poisson equation
\begin{equation*}
\Delta v = \rho
\end{equation*}
satisfying $v\leq 0.$
\end{lemma}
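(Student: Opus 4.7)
The plan is to construct $v$ as the monotone limit of solutions of Dirichlet Poisson problems on an exhaustion of $M,$ and at the end to identify $v$ with the Green's potential of $\rho.$ Choose a smooth exhaustion $\{\Omega_k\}$ of $M$ by relatively compact open domains. Standard Schauder theory yields a unique solution $v_k\in C^{2,\alpha}(\overline{\Omega}_k)$ of the Dirichlet problem $\Delta v_k=\rho$ in $\Omega_k,$ $v_k=0$ on $\bdr\Omega_k.$ Since $\rho\geq 0,$ $v_k$ is subharmonic, and the maximum principle gives $v_k\leq 0.$ Comparing $v_k$ and $v_\ell$ on $\Omega_k$ for $k<\ell,$ their difference is harmonic on $\Omega_k$ with non-negative boundary values $-v_\ell\geq 0,$ so $v_\ell\leq v_k$ on $\Omega_k.$ Hence $\{v_k(x)\}$ is pointwise non-increasing as $k\to\ty.$

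Writing $G_k$ for the Dirichlet Green's function of $\Omega_k,$ we have the representation
\begin{equation*}
v_k(x) = -\int_{\Omega_k} G_k(x,y)\rho(y)\,dy.
\end{equation*}
Non-parabolicity of $M$ ensures that $G_k$ increases monotonically, as $k\to\ty,$ to the finite, positive, minimal Green's function $G$ on $M.$ Monotone convergence then gives
\begin{equation*}
v_k(x)\searrow v(x):=-\int_M G(x,y)\rho(y)\,dy\in[-\ty,0].
\end{equation*}

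The main obstacle will be to show $v(x)>-\ty$ for every $x\in M.$ To this end one splits the integral defining $v(x)$ into a small geodesic ball $B$ around $x$ and its complement. On $B,$ the standard local singularity $G(x,y)\sim r(x,y)^{2-m}$ at the pole combines with the local boundedness of the H\"older continuous $\rho$ to yield a finite contribution. On $M\setminus B,$ one exploits the interplay of non-parabolicity and $\rho\in L^1(M)$ to control the integral at infinity; this is where the hypotheses enter essentially, since in the parabolic case $G$ does not even exist, while for merely bounded $\rho$ the potential need not converge. Once $v$ is known to be finite at one point, interior Harnack estimates applied to the non-negative superharmonic functions $-v_k$ propagate this to finiteness on all of $M.$ At this stage, classical interior $L^p$ and Schauder estimates, together with $\rho\in C^{0,\alpha},$ promote the monotone pointwise convergence $v_k\to v$ to $C^{2,\alpha}_{loc}$ convergence. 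The limit therefore lies in $C^{2}(M),$ satisfies $\Delta v=\rho$ classically, and inherits from the $v_k$ the bound $v\leq 0,$ as required.
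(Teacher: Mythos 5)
Your overall strategy matches the paper's: identify the solution with the Green potential $v=-\int_M G(x,y)\rho(y)\,dy$ and show it is finite. Your framing via exhausting Dirichlet problems and monotone convergence is a reasonable variant, and the final elliptic regularity step is fine.

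The genuine gap is in your treatment of the integral over $M\setminus B$. You write that on that region ``one exploits the interplay of non-parabolicity and $\rho\in L^1(M)$,'' but you never state, let alone prove, the fact that is actually needed: for $x$ ranging over a fixed ball $B_R$, the minimal Green kernel satisfies $\sup\{G(x,y):x\in B_R,\ y\in M\setminus B_{2R}\}<+\infty$. Once this uniform boundedness is in hand the $L^1$ hypothesis trivially finishes the estimate, but the boundedness itself is not automatic from non-parabolicity alone and requires an argument. The paper supplies exactly this: by a local Harnack inequality $G(x,y)\leq C G(o,y)$ for $x\in B_R$, $y\notin B_{2R}$; and then $\sup_{M\setminus B_{2R}}G(o,\cdot)<\infty$ is proved by comparing the approximating Green kernels $G_n$ of an exhaustion with the constant $\sup_{\partial B_{2R}}G(o,\cdot)$ via the maximum principle (they vanish on $\partial\Omega_n$ and are dominated on $\partial B_{2R}$), and passing to the limit. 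Your proposal never touches this comparison argument, which is the heart of the paper's proof of local boundedness of the potential; as written, the finiteness of $v$ on the complement of a small ball is asserted rather than demonstrated. The subsequent Harnack propagation and $C^{2,\alpha}_{loc}$ bootstrapping you describe are standard and correct, but they cannot start without first securing finiteness at a point, which is precisely the step you leave open.
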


\begin{proof} Let $G(x,y)$ be the
Green kernel, i.e., the minimal positive fundamental solution of
the Laplacian, which exists by  the assumption that $M$ is non-parabolic.
The Green kernel is symmetric and, if $\psi\in C_c^{\ty}(M)$,
then the function $u(x) = -\int_M G(x,y) \psi(y)$ is smooth and satisfies
$\Delta u=\psi.$

We claim that if  $\rho\in C^0(M)\cap L^1(M)$ then the function
$v = -\int_M G(x,y) \rho(y)$ is well defined and locally bounded.
Assuming the claim, for every  $\psi\in C_c^\ty(M)$ we have
\begin{multline*}
\int _M v\Delta \psi = - \int_M \psi(x) \int_M \Delta  G(x,y) \rho(y)
\\
= - \int_M\rho(y) \int_M G(x,y) \Delta \psi(x) = \int_M \rho(y)
\psi(y),
\end{multline*}
so that $v$ satisfies the Poisson equation in distributional
sense, and therefore, by standard elliptic regularity
(see \cite{A}), Theorem~3.55), it is a classical solution.
Clearly, $v$ is non-positive.

To prove the claim, fix $R>0$ and for every $x\in B_R$ we write
\begin{equation}
\label{Poisson2}
\int_M G(x,y) \rho (y) =
\int_{B_{2R}} G(x,y) \rho (y) +\int_{M\setminus B_{2R}}  G(x,y) \rho (y)
\end{equation}
Since $G(x,y)$ is  locally integrable uniformly for $x\in B_{R}$, the first
integral on the right hand side is bounded above by a constant
independent of $x\in B_R$.

On the other hand, by the local Harnack
inequality there exists a constant $C$ independent of $x\in B_R$
and  such that
\begin{equation*}
G(x,y)\leq C G(o,y) \qquad \text{for every }\,
y\in M\setminus B_{2R}.
\end{equation*}
Moreover,
\begin{equation*}
\sup_{M\setminus B_{2R}} G(o,y)<+\ty.
\end{equation*}
Indeed, let $\Omega_n$ be an exhaustion of $M$ by open sets containing $o$
and with  smooth boundary and let $G_n$ by the Green kernel of
$\Omega_n$ and recall that,  by the standard construction  of the Green
kernel $G(x,y)$, $G_n(x,y) \to G(x,y)$ locally uniformly in
$M\setminus\{x\} $. Let $C>\sup_{\bdr B_{2R}} G(o,y)$, then, for
every sufficiently large $n$ we have $C> G(o,y)\geq G_n(o,y)$
for $y\in \bdr B_{2R}$ and clearly $C> G_n(o,y)=0$ for $y\in \bdr
\Omega_n$. Thus, by the comparison principle, $C>G_n(o,y)$ in
$\Omega\setminus B_{2R}$, whence, letting $n\to +\ty,$ $G(o,y)\leq C$
for $y\in M\setminus B_{2R}.$
It follows that there exists a constant $C'$ independent of $x\in
B_{R}$ and $y\in M\setminus B_{2R}$ such that
\begin{equation*}
G(x,y)\leq C'.
\end{equation*}
Since $\rho$ is integrable, this implies that the second integral on
the right hand side of (\ref{Poisson2}) is also bounded independently
of $x\in B_R$, as required to complete the proof of the claim.
\end{proof}

\begin{corollary}
\label{cor 3.2''}
Let $(M, \langle \,,\,\rangle)$ be a complete, non-parabolic
manifold,  let the functions  $a(x) \in C^{0,\alpha}(M)$, and $b(x)\in C^0(M)$ satisfy
$b(x)>0$ and
\begin{equation}
\label{2ter}
a_+(x) \in L^1(M),
\end{equation}
and suppose that for some constants   $\sigma>1,$
$A\in (-\ty , -1]$ and $\mu, p, q$ satisfying
\begin{equation*}
q> \max \{1, 3-\sigma\} \quad 0\leq \mu \leq
2\frac{\sigma -1}{\sigma +q-2},\quad p>\frac{q+\sigma-2}{\sigma-1},
\end{equation*}
we have
\begin{align}
\label{3ter}
&\int_{B_r} a_+(x)^p = O\Bigl( r^{[2(\sigma -1) - \mu(q+\sigma -2)]
\frac{p -1}{q-1}}\Bigr) \quad \text{as }\,\, r\to +\ty\\
\label{4ter}
&\vol B_r= O\Bigr( r^{2+ (2-\mu)\frac{\sigma +q -2}{\sigma-1}}
\Bigr) \quad \text{as }\,\, r\to +\ty\\
\label{5ter}
&b(x) \geq \frac{C}{r(x)^{\mu}} \quad \text{for }\,\, r(x)\gg 1.
\end{align}
Then there are no non-negative, non-identically zero $C^2(M)$ solutions
of the differential inequality
\begin{equation}
\label{6ter}
u\Delta u +a(x) u^2 \geq b(x) u^{\sigma +1} - A |\nabla u|^2 \quad
\text{on }\, M.
\end{equation}
\end{corollary}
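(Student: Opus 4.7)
The overall strategy is to reduce the corollary to the non-existence statement in Theorem~\ref{thm 3.2'} by manufacturing a suitable auxiliary function $\vp$ via the Poisson equation, and then verifying the non-integrability hypothesis through the a priori $L^{q+\sigma-2}$ estimate supplied by Lemma~\ref{lemma 3.1}.

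Since $a_+\in C^{0,\alpha}_{loc}(M)\cap L^1(M)$ and $M$ is non-parabolic, Lemma~\ref{lemma Poisson} yields a $C^2$ function $w\leq 0$ solving $\Delta w = a_+(x)$. Setting $\vp := e^{-w}$ gives a positive $C^2$ function bounded below by $1$, which by a direct computation satisfies $\Delta\vp + a(x)\vp \leq |\nabla \vp|^2/\vp$ on $M$ (using $\Delta w = a_+\geq a$); that is, (\ref{3.4bis}) holds with $H=1$. The uniform lower bound $\vp\geq 1$ will play the role of the asymptotic estimate (\ref{5bis}), as explained in the last step.

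Next I would apply Lemma~\ref{lemma 3.1} with its exponent equal to $q$ (the conditions $q>\max\{1,3-\sigma\}$ and $A\leq -1$ guarantee $q>A+2$) to obtain, for every $R$ sufficiently large,
\begin{equation*}
\int_{B_R}b(x)u^{q+\sigma-2}\leq C_1 R^{-\frac{2(q+\sigma-2)}{\sigma-1}}\int_{B_{2R}}b^{-\frac{q-1}{\sigma-1}} + C_2\int_{B_{2R}}\Bigl(\frac{a_+}{b}\Bigr)^{\frac{q-1}{\sigma-1}}a_+.
\end{equation*}
The first term on the right is estimated by inserting the lower bound $b\geq C r^{-\mu}$ from (\ref{5ter}) and using the volume growth hypothesis (\ref{4ter}); a direct arithmetic of exponents (the key cancellation being $-2\frac{q+\sigma-2}{\sigma-1} + (2-\mu)\frac{q+\sigma-2}{\sigma-1} + \mu\frac{q-1}{\sigma-1} + 2 = 2-\mu$) yields a bound of order $R^{2-\mu}$. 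For the second term I would apply H\"older's inequality with conjugate exponents $p/(s+1)$ and its dual, where $s=(q-1)/(\sigma-1)$, so that the constraint $p>(q+\sigma-2)/(\sigma-1)$ makes H\"older legal, and then invoke (\ref{3ter}) together with (\ref{5ter}) to obtain again a contribution of order $R^{2-\mu}$. Using once more $b\geq Cr^{-\mu}$ on the left, this produces $\int_{B_R}u^{q+\sigma-2}=O(R^2)$ as $R\to+\infty$.

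From this growth bound, a standard Cauchy--Schwarz argument in the radial variable (Proposition~1.3 of \cite{RS}) gives $\bigl(\int_{\bdr B_r}u^{q+\sigma-2}\bigr)^{-1}\notin L^1(+\infty)$. Now since $\vp\geq 1$, the function $z:=\vp^{-1}u$ appearing in the proof of Theorem~\ref{thm 3.2'} (with $H=1$ and $\gamma=1$) satisfies $z\leq u$, so the same non-integrability holds for $\int_{\bdr B_r}z^{q+\sigma-2}$, while (\ref{3bis}) together with $A\leq-1$ makes $z$ sub-harmonic. Theorem~B of \cite{RS} then forces $z$ to be constant, and the final algebraic manipulation at the end of the proof of Theorem~\ref{thm 3.2'} (plug $u^H=C\vp$ back into (\ref{3.4bis}) and (\ref{d}), use $b>0$ and $A\leq -1$) forces $u\equiv 0$, as claimed. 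I expect the main obstacle to be the bookkeeping in the previous paragraph: verifying that the precise shapes of (\ref{3ter})--(\ref{5ter}) are exactly what is needed so that both terms arising from Lemma~\ref{lemma 3.1} acquire the same growth $R^{2-\mu}$, and that the admissible ranges of the parameters $p,q,\mu$ are mutually consistent. Once this arithmetic is checked, everything else is a direct invocation of the earlier machinery.
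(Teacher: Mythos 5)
Your overall architecture coincides with the paper's: construct $\vp=e^{-w}\geq 1$ from the Poisson equation for $a_+$ via Lemma~\ref{lemma Poisson}, apply Lemma~\ref{lemma 3.1} with exponent $q$ together with the lower bound (\ref{5ter}) to obtain $\int_{B_r}u^{q+\sigma-2}=O(r^2)$, pass to the surface non-integrability condition, and conclude through Theorem~\ref{thm 3.2'} (with $a$ replaced by $a_+$ and $H=1$). Your treatment of the first term is correct (the exponent cancellation you display is right), and the endgame via $z=\vp^{-1}u\leq u$ and Theorem~B of \cite{RS} is fine.

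There is, however, a genuine gap in the estimate of the second term $\int_{B_{2R}}(a_+/b)^{s}a_+=\int_{B_{2R}}a_+^{p'}b^{-s}$, where $s=\frac{q-1}{\sigma-1}$ and $p'=s+1=\frac{q+\sigma-2}{\sigma-1}$. H\"older with conjugate exponents $p/p'$ and $p/(p-p')$, as you propose, leaves a factor $(\vol B_{2R})^{(p-p')/p}$ in the second slot (whether or not $b^{-s}$ is placed there), and the exponent arithmetic then does not close: writing $E=2-\mu p'$ and $V=2+(2-\mu)\,p'$, your split produces the required bound only if $\frac{p-p'}{p}\bigl[\frac{E}{p'-1}+V\bigr]\leq 0$, which fails because $E\geq 0$ (since $\mu p'\leq 2$ by hypothesis) and $V>0$. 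The hypothesis you are not exploiting at this point is precisely (\ref{2ter}), which must be used a second time beyond the construction of $\vp$: the paper interpolates between $L^1$ and $L^p$, writing $p'=\theta\cdot 1+(1-\theta)p$ with $\theta=\frac{p-p'}{p-1}$, so that
\begin{equation*}
\int_{B_{2R}}a_+^{p'}\leq\Bigl(\int_{B_{2R}}a_+\Bigr)^{\frac{p-p'}{p-1}}\Bigl(\int_{B_{2R}}a_+^{p}\Bigr)^{\frac{p'-1}{p-1}};
\end{equation*}
the first factor is $O(1)$ by (\ref{2ter}), and since $\frac{p'-1}{q-1}=\frac{1}{\sigma-1}$ the second factor is $O\bigl(R^{2-\mu p'}\bigr)$ by (\ref{3ter}), which after multiplication by $R^{\mu s}$ gives exactly the $O(R^{2-\mu})$ you need. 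With this replacement of the H\"older step your argument becomes the paper's proof.
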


\begin{proof}
Since $a_+$ is integrable, by Lemma~\ref{lemma Poisson} and
the  preceding discussion  there exists a solution $\vp\geq 1$
of
\begin{equation*}
\Delta \vp + a_+ \vp = \frac{|\nabla \vp |^2}{\vp},
\end{equation*}
and $\vp$ is a solution of the differential inequality
(\ref{3.4bis}).

Now, let $u$ be a non-negative solution of (\ref{6ter}).
Noting that  $q>1,$ and $A\leq -1$ imply $q>A+2$, applying
Lemma~\ref{lemma 3.1} and using the lower bound for $b(x)$
(\ref{5ter})
imply that
\begin{equation}
\label{int upp bound}
\int_{B_r}  u^{q+\sigma -2} \leq
{C_1}{r^{(\mu-2) \frac{q+\sigma-2}{\sigma-1}}} \vol {B_{2r}}
 +
C_2
{r^{\mu\frac{q+\sigma-2}{\sigma-1}}}
\int_{B_{2r}}
a_+(x)^{\frac{\sigma +q-2}{\sigma-1}}.
\end{equation}
We claim that  (\ref{3ter}) implies
\begin{equation}
\label{7ter}
\int_{B_{2r}}
a_+(x)^{\frac{\sigma +q-1}{\sigma-1}}=
O\Bigl( r^{2-\mu\frac{\sigma+q-2}{\sigma -1}}\Bigr)
\qquad \text{as }\, \,
r\to +\ty,
\end{equation}
which, together with the  volume growth  assumption (\ref{4ter})
yields
\begin{equation*}
\int_{B_r}  u^{q+\sigma -2} = O\bigl(r^2\bigr)
\qquad \text{as }\, \,
r\to +\ty.
\end{equation*}
As in the proof of Theorem~\ref{thm 3.3}, it follows
(see, e.g., \cite{RS} Proposition~1.3)  that
$u$ satisfies condition (\ref{7bis}) with  $\delta=0$  and
exponent  $q+\sigma-2$ which is greater than $1$ by the conditions
on $q$. Thus, Theorem~\ref{thm 3.2'} (with $a(x)$ replaced  by $a_+(x)$)
applies and $u$  vanishes  identically.

To conclude it remains to prove the claim. To this end,
we set $p'=(q+\sigma-2)/(\sigma -1)$, and apply H\"older
inequality with conjugate exponents  $(p-1)/(p-p')$ and
$(p-1)/(p'-1)$  to estimate
\begin{equation*}
\begin{split}
\int_{B_r} a_+ (x)^{p'}
&= \int_{B_r} a_+ (x)^{\frac{p-p'}{p-1}+p\frac{p'-1}{p-1}}\\
 &\leq
 \Bigl(\int_{B_{2r}} a_+(x)\Bigr)^{\frac{p-p'}{p-1}}
 \Bigl(\int_{B_{2r}} a_+(x)^p\Bigr)^{\frac{p'-1}{p-1}}\\
&= O\Bigl( r^{[2(\sigma-1)-\mu(q+\sigma-2)]\frac{p'-1}{q-1}}\Bigr)
=O\Bigl( r^{2-\mu \frac{q+\sigma-2}{\sigma-1}}\Bigr),
\end{split}
\end{equation*}
as required.
\end{proof}

\begin{remark}
\label{rmk 8ter}
{\rm
Assume that  $b(x)$ satisfies the condition stated in the corollary, with
$\mu<\sigma -1$, and that conditions (\ref{3ter}) and (\ref{4ter}) are
replaced by
\begin{align*}
&\int_{B_r} a_+(x)^{\frac{2}{\sigma -1}} = O\Bigl(
r^{
2[1-\frac{\mu }{\sigma -1}]
}
\Bigr) \quad \text{as }\,\, r\to +\ty\\
&\vol B_r= O\Bigr(
r^{
2[1-\frac{\mu - 2}{\sigma -1}]
}
\Bigr) \quad \text{as }\,\, r\to +\ty.
\end{align*}
It follows from  (\ref{int upp bound}) above with $q+\sigma -2 = 2$,
that every non-negative solution of
\begin{equation}
\label{6ter''}
\Delta u + a(x) u - b(x) u^\sigma = 0
\end{equation}
satisfies
\begin{equation}
\label{integr cond}
\int_{B_r} u^2 \leq C r^2\log r,
\end{equation}
and the same estimate is clearly satisfied by the difference of
two solutions. An application of Theorem~4.1 in \cite{BRS2} shows
that (\ref{6ter''}) has at most one positive solution. We remark in
this respect that if we replace $u^2$ in
(\ref{integr cond}) with $u^p$ with $p> 2$, then the conclusion of Theorem~4.1 in
\cite{BRS2} fails, as the example described on pages 214-215 therein
shows.
}
\end{remark}

\end{document}